\def\sr{{}^sr}
\theoremstyle{plain}
\newtheorem{theorem}{Theorem}[section]
\newtheorem{corollary}[theorem]{Corollary}
\newtheorem{lemma}[theorem]{Lemma}
\newtheorem{proposition}[theorem]{Proposition}
\theoremstyle{remark}
\newtheorem{remark}{Remark}
\theoremstyle{definition}
\newcommand{\al}{\alpha}
\newcommand{\be}{\beta}
\newcommand{\ga}{\gamma}
\newcommand{\la}{\lambda}
\newcommand{\de}{\delta}
\newcommand{\eps}{\varepsilon}
\newcommand{\bx}{\bar x}
\newcommand{\by}{\bar y}
\newcommand{\iv}{^{-1} }
\newcommand{\Real}{\mathbb R}
\newcommand {\R} {\mathbb R}
\newcommand {\B} {\mathbb B}
\newcommand {\Sp} {\mathbb S}
\newcommand {\gph} {{\rm gph}\,}
\newcommand {\Er} {{\rm Er}\,}
\newcommand {\sd} {\partial}
\renewcommand{\iff}{$ \Leftrightarrow\ $}
\newcommand{\folgt}{$ \Rightarrow\ $}
\newcommand{\ds}{\displaystyle}
\def\lsc{lower semicontinuous}
\def\RHS{right-hand side}
\newcommand{\norm}[1]{\left\Vert#1\right\Vert}
\newcounter{mycount}
\def\cnta{\setcounter{mycount}{\value{enumi}}}
\def\cntb{\setcounter{enumi}{\value{mycount}}}
\def\smartqedtr{\def\qedtr{\ifmmode\triangle\else{\unskip\nobreak\hfil
\penalty50\hskip1em\null\nobreak\hfil$\triangle$
\parfillskip=0pt\finalhyphendemerits=0\endgraf}\fi}}
\renewcommand {\theenumi} {\roman{enumi}}
\begin{document}

\articletype{Original Article}

\title{{\itshape Error Bounds and Metric Subregularity}}

\author{Alexander Y. Kruger$^*$
\thanks{$^*$Email: a.kruger@federation.edu.au}
\\\vspace{6pt}
{\em{Centre for Informatics and Applied Optimisation,\\
School of Science, Information Technology and Engineering,\\
Federation University Australia,\\
POB 663, Ballarat, Vic, 3350, Australia}}
\\\received{Dedicated to the 40th Anniversary of the journal;\\ its founder and former editor-in-chief, Professor Karl-Heinz Elster;\\ and Professor Alfred G\"opfert, an editorial board member since 1988 in celebration of his 80th birthday}}


\maketitle

\begin{abstract}
Necessary and sufficient criteria for metric subregularity (or calmness) of set-valued mappings between general metric or Banach spaces are treated in the framework of the theory of error bounds for a special family of extended real-valued functions of two variables.
A classification scheme for the general error bound and metric subregularity criteria is presented.
The criteria are formulated in terms of several kinds of primal and subdifferential slopes.

\begin{keywords}
error bounds; slope; metric regularity; metric subregularity; calmness
\end{keywords}

\begin{classcode}
49J52; 49J53; 58C06; 47H04; 54C60
\end{classcode}
\end{abstract}

\section{Introduction}

This paper is another attempt to demonstrate that (necessary and sufficient) criteria for \emph{metric subregularity} (or equivalently \emph{calmness}) of set-valued mappings between general metric or Banach spaces can be treated in the framework of the theory of \emph{error bounds} of extended real-valued functions.
Another objective is to classify the general error bound and subregularity criteria and clarify the relationships between them.

Due to the importance of the three properties mentioned above in both theory and applications, the amount of publications devoted to the properties and corresponding (mostly sufficient) criteria is huge.
The interested reader is referred to the articles by Az{\'e} \cite{Aze03}, Az{\'e} and Corvellec \cite{AzeCor04}, Corvellec and Motreanu \cite{CorMot08}, Gfrerer \cite{Gfr11}, Ioffe \cite{Iof00_}, Ioffe and Outrata \cite{IofOut08}, Ngai and Th{\'e}ra \cite{NgaiThe04,NgaiThe08}, Jong-Shi Pang \cite{Pang97}, Zheng and Ng \cite{ZheNg10,ZheNg12} and the references therein.

Both local and global settings of the properties have proved to be important and have been thoroughly investigated.
In this paper, only local properties are considered.

Let us recall several basic definitions.

An extended-real-valued function $f:X\to\Real_\infty:=\R\cup\{+\infty\}$ on a metric space $X$ is said to have a local \emph{error bound} (cf., e.g., \cite{Aze03,AzeCor04,FabHenKruOut10,Iof79}) with constant $\tau>0$ at a point $\bar{x}\in X$ with $f(\bx)=0$ if there exists a neighbourhood $U$ of $\bx$ such that
\begin{equation}\label{el+}
{\tau}d(x,S(f)) \le f_+(x)\quad \mbox{ for all } x \in U.
\end{equation}
Here $S(f)$ stands for the lower $0$-level set $\{x\in X\mid f(x)\le0\}$.

A set-valued mapping
$F:X\rightrightarrows Y$ is a mapping which
assigns to every $x\in X$ a subset (possibly empty) $F(x)$ of
$Y$.
We use the notation
$$\gph F:=\{(x,y)\in X\times Y\mid
y\in F(x)\}$$
for the graph of $F$ and $F\iv : Y\rightrightarrows
X$ for the inverse of $F$.
This inverse (which always exists) is defined by
$$F\iv(y) :=\{x\in X|\, y\in F(x)\},\quad y\in Y,$$
and satisfies
$$(x,y)\in\gph F \quad\Leftrightarrow\quad (y,x)\in\gph F\iv.$$

A set-valued mapping $F:X\rightrightarrows Y$ between metric spaces is called (locally) \emph{metrically subregular} (cf., e.g., \cite{Mor06.1,RocWet98,DonRoc09,Pen13}) at a point $(\bx,\by)\in\gph F$ with constant $\tau>0$ if there exists neighbourhoods $U$ of $\bx$ and $V$ of $\by$ such that
\begin{equation*}\label{MR0++}
\tau d(x,F^{-1}(\by))\le d(\by,F(x)\cap V) \quad \mbox{ for all } x \in U.
\end{equation*}

A set-valued mapping $F:X\rightrightarrows Y$ between metric spaces is called (locally) \emph{calm} (cf., e.g., \cite{RocWet98,DonRoc09}) at a point $(\bx,\by)\in\gph F$ with constant $\tau>0$ if there exist neighbourhoods $U$ of $\bx$ and $V$ of $\by$ such that
\begin{equation*}
d(y,F(\bx))\le\tau d(x,\bx) \quad \mbox{ for all } x\in U,\, y\in F(x)\cap V.
\end{equation*}

The above two properties represent weaker versions of the more robust \emph{metric regularity} and \emph{Aubin} properties, respectively, which correspond to replacing $\bx$ and $\by$ in the above inequalities by arbitrary (not fixed!) $x\in U$ and $y\in V$; cf. \cite{DonRoc09,Mor06.1,RocWet98}.

An immediate observation is that the calmness of $F$ at $(\bx,\by)$ with  constant $\tau$ is equivalent to the metric subregularity of $F\iv$ at $(\by,\bx)$ with  constant $\tau\iv$; cf. \cite[Theorem~3H.3]{DonRoc09}.
Hence, any metric subregularity criterion automatically translates into a calmness criterion.

Another observation is that neighbourhood $V$ in the original definition of metric subregularity is actually not needed and the definition is equivalent to the existence of a neighbourhood $U$ of $\bx$ such that
\begin{equation}\label{MR0-}
\tau d(x,F^{-1}(\by))\le d(\by,F(x)) \quad \mbox{ for all } x \in U.
\end{equation}
A similar remark can be made regarding the definition of calmness; cf. \cite[Exercise~3H.4]{DonRoc09}.

Comparing inequalities \eqref{el+} and \eqref{MR0-}, one can easily see that metric subregularity of $F$ at $(\bx,\by)$ is equivalent to the local error bound property of the extended real-valued function $x\mapsto d(\by,F(x))$ at $\bx$ (with the same constant).
So one might be tempted to apply the well developed theory of error bounds to characterizing metric subregularity and calmness.
This approach was very successfully followed by Ioffe and Outrata \cite{IofOut08} in finite dimensions.

However, in general the case is not that simple.
Most of the error bound criteria (cf. Section~\ref{S1}) are formulated for \lsc\ functions, but in infinite dimensions the function $x\mapsto d(\by,F(x))$ can fail to be \lsc\ even when $\gph F$ is closed.
As observed by Ngai and Th\'era \cite{NgaiThe08}, in some situations, one can make use of the lower semicontinuous envelope of this function: $x\mapsto\liminf_{u\to x}d(\by,F(u))$, although this breaks the equivalence between error bounds and metric subregularity.

Comparing the criteria for the error bounds and metric subregularity (see Sections~\ref{S1} and \ref{S3}), one can notice that in most cases they look very similarly.
Furthermore, the proofs of these criteria, though formally independent, are usually based on the same ideas.
In fact, when proving regularity or calmness criteria for set-valued mappings, the authors often use error bound-like estimates for an extended real-valued function, but defined on the product space $X\times Y$.
The following function (or a function derived from it):
\begin{gather}\label{f+}
f(x,y):=
\begin{cases}
d(y,\by) & \text{if } (x,y)\in\gph F,\\
+\infty & \text{otherwise}
\end{cases}
\end{gather}
is most commonly used for that purpose; cf. \cite{Iof00_,Iof03,Aze03,AzeCor04,Aze06,Iof07,AzeBeh08, AzeCor09,Iof11,Iof13}.
Observe that this function is \lsc\ if $\gph F$ is closed.

In this paper, the theory of local error bounds in metric or Banach/Asplund spaces is, with little changes in the standard proofs, expanded to a class of extended real-valued functions of two variables including, in particular, functions of the type \eqref{f+}.
Then, metric subregularity criteria for set-valued mappings are formulated as consequences of the corresponding ones for error bounds.

Following the standard trend initiated by Ioffe \cite{Iof00_} (cf. \cite{Aze03,Aze06,AzeCor02,AzeCor04,CorMot08, FabHenKruOut10,FabHenKruOut12, BedKru12,BedKru12.2,NgaKruThe12, AmbGigSav08, DurTroStr13,ApeDurStr13, Iof00_,Iof07,Iof07.2,Iof08.2,Iof13,Iof13.2,Pen10, NgaiThe08,NgaiTronThe13,Ude05,Ude07,Ude10.2, MenYan12}), criteria for error bounds and metric (sub-)regularity of set-valued mappings in metric spaces are formulated in terms of (strong) \emph{slopes} \cite{DMT}.
To simplify the statements in metric and also Banach/Asplund spaces, several other kinds of primal and dual space slopes for real-valued functions and set-valued mappings are introduced in this paper and the relationships between them are established.
These relationships lead to a simple hierarchy of the error bound and metric subregularity criteria.

Some statements in the paper look rather long because each of them contains an almost complete list of criteria applicable in the situation under consideration.
The reader is not expected to read through the whole list.
Instead, they can select a particular criterion or a group of criteria corresponding to the setting of interest to them (e.g., local or nonlocal, in metric or Banach/Asplund spaces, etc.)

Certain important groups of criteria are not considered in the current paper: in terms of linearized objects (directional derivatives and tangent cones of some sort) and limiting objects (subdifferentials, normal cones and coderivatives) as well as criteria for nonlinear, in particular H\"older, error bounds and metric subregularity.
The convex case is only slightly touched on in several statements.

Only general settings are considered.
For metric subregularity and calmness criteria for specific set-valued mappings arising from optimization and variational problems we refer the reader to \cite{HenJouOut02,HenOut01,HenOut05, KlaKum02,KlaKum06,BosJouHen04, Gfr13,Gfr14,GobLop14,Ude10.2,ZheNg10} and the references therein.

Our basic notation is standard, see \cite{Mor06.1,RocWet98,DonRoc09}.
Depending on the context, $X$ and $Y$ are either metric or normed
spaces.
Metrics in all spaces are denoted by the same symbol
$d(\cdot,\cdot)$,
$d(x,A):=\inf_{a\in{A}}\|x-a\|$ is the point-to-set distance from
$x$ to $A$.
$B_\de(x)$ denotes the closed ball with radius $\de$ and centre $x$.
When dealing with product spaces, if not specified otherwise, we assume that the product
topology is given by the maximum type distance/norm.

In the case when $X$ and $Y$ are normed spaces, their topological duals are denoted $X^*$ and $Y^*$, respectively, while $\langle\cdot,\cdot\rangle$ denotes the bilinear form defining the pairing between the spaces.
The closed unit balls in a normed space and its dual are denoted by $\B$ and $\B^*$, respectively, while $\Sp$ and $\Sp^*$ stand for the unit spheres.

We say that a subset $\Omega$ of a metric space is locally closed near $\bar{x}\in\Omega$ if $\Omega\cap{U}$ is closed for some closed neighbourhood $U$ of $\bar{x}$.

Given an $\al\in\R_\infty$, $\alpha_+$ denotes its ``positive'' part:
$\alpha_+:=\max\{\alpha,0\}$.
If $f$ is an extended real-valued function on $X$, then $f_+$ is a function defined, for each $x\in X$, as $f_+(x):=(f(x))_+$.

If $X$ is a normed linear space, $f:X\to\Real_\infty$, $x\in{X}$, and $f(x)<\infty$, then
\begin{gather}
\partial{f}(x) := \left\{x^\ast\in X^\ast\bigl|\bigr.\;
\liminf\limits_{u\to x,\,u\ne x}
\frac{f(u)-f(x)-\langle{x}^\ast,u-x\rangle}
{\norm{u-x}} \ge 0 \right\}\label{Frsd}
\end{gather}
is the \emph{Fr\'echet subdifferential} of $f$ at $x$.
Similarly, if $x\in\Omega\subset X$, then
\begin{gather}
N_{\Omega}(x) := \left\{x^\ast\in X^\ast\bigl|\bigr.\
\limsup_{u\to x,\,u\in\Omega\setminus\{x\}} \frac {\langle x^\ast,u-x\rangle}
{\|u-x\|} \le 0 \right\}\label{Fr}
\end{gather}
is the \emph{Fr\'echet normal cone} to $\Omega$ at $x$.
In the convex case, sets \eqref{Frsd} and \eqref{Fr} reduce to the subdifferential and normal cone in the sense of convex analysis, respectively.
If $f(x)=\infty$ or $x\notin\Omega$, we set, respectively, $\partial{f}(x)=\emptyset$ or $N_{\Omega}(x)=\emptyset$.

If $F:X\rightrightarrows Y$ is a set-valued mapping between normed linear spaces and $(x,y)\in\gph F$, then
 \[
D^{*}F(x,y)(y^{*}):= \{x^{*}\in X^*\mid (x^{*},-y^{*})\in N_{\gph F}(x,y)\}, \ y^{*}\in X^*
 \]
is the {\em Fr\'echet coderivative} of $F$ at $(x,y)$.

The proofs of the main statements rely heavily on two fundamental results of variational analysis: the \emph{Ekeland variational principle} (Ekeland \cite{Eke74}; cf., e.g., \cite[Theorem~2.1]{Kru03.1}, \cite[Theorem 2.26]{Mor06.1}) and  the \emph{fuzzy} (\emph{approximate}) \emph{sum rule} (Fabian \cite{Fab89}; cf., e.g., \cite[Rule~2.2]{Kru03.1}, \cite[Theorem 2.33]{Mor06.1}).
Below we provide these results for completeness.

\begin{lemma}[Ekeland variational principle] \label{l01}
Suppose $X$ is a complete metric space, and $f: X\to\R_\infty$ is lower semicontinuous and bounded from below, $\varepsilon>0, \lambda>0$. If
$$
f(v)<\inf_X f + \varepsilon,
$$
then there exists $x\in X$ such that

(a) $d(x,v)<\lambda $,

(b) $f(x)\le f(v)$,

(c) $f(u)+(\varepsilon/\lambda)d(u,x)\ge f(x)$ for all $u\in X$.
\end{lemma}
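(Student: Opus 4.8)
The plan is to construct the point $x$ as the limit of an iteratively defined Cauchy sequence produced by a ``greedy descent''. Writing $\gamma:=\varepsilon/\lambda$, the construction revolves around the sets
\[
S(w):=\{u\in X\mid f(u)+\gamma\, d(u,w)\le f(w)\},\qquad w\in X,
\]
each of which is nonempty (it contains $w$) and, by the lower semicontinuity of $f$ and the continuity of $d(\cdot,w)$, closed.

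First I would put $x_1:=v$ and define $x_{n+1}$ recursively by selecting, inside $S(x_n)$, a point whose value of $f$ is close to minimal:
\[
x_{n+1}\in S(x_n),\qquad f(x_{n+1})\le\tfrac12\Bigl(\inf_{S(x_n)}f+f(x_n)\Bigr).
\]
The structural facts to establish, in order, are: (i) the sets are nested, $S(x_{n+1})\subseteq S(x_n)$, which follows from $x_{n+1}\in S(x_n)$ together with the triangle inequality; (ii) $\{f(x_n)\}$ is non-increasing; and (iii) every $u\in S(x_n)$ obeys $\gamma\, d(u,x_n)\le f(x_n)-\inf_{S(x_n)}f$, so that the diameter of $S(x_n)$ is bounded by $2\bigl(f(x_n)-\inf_{S(x_n)}f\bigr)/\gamma$. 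The near-minimal selection forces the gaps $f(x_n)-\inf_{S(x_n)}f$ to halve at each step and hence to tend to $0$, so that $\mathrm{diam}\,S(x_n)\to0$.

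With $\{S(x_n)\}$ a decreasing sequence of nonempty closed sets whose diameters vanish, completeness of $X$ (Cantor's intersection theorem) produces a single point $x\in\bigcap_{n}S(x_n)$, and $x_n\to x$. It then remains only to read off the three conclusions. Since $x\in S(x_1)=S(v)$, we get $f(x)\le f(v)-\gamma\, d(x,v)\le f(v)$, which is (b); the same membership gives $\gamma\, d(x,v)\le f(v)-f(x)\le f(v)-\inf_X f<\varepsilon$, that is $d(x,v)<\lambda$, which is (a). For (c) I would argue by contradiction: if $f(u)+\gamma\, d(u,x)<f(x)$ for some $u$, then combining this with $x\in S(x_n)$ and the triangle inequality shows $u\in S(x_n)$ for every $n$, whence $u\in\bigcap_n S(x_n)=\{x\}$ and $u=x$, contradicting the strict inequality.

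The hard part will be step (iii): guaranteeing $\mathrm{diam}\,S(x_n)\to0$. This is precisely where the near-optimal (halving) choice of $x_{n+1}$ is indispensable, since an arbitrary selection of $x_{n+1}\in S(x_n)$ need not drive the gaps $f(x_n)-\inf_{S(x_n)}f$ down, and then the nested sets could fail to collapse to a single point. A variant that sidesteps the explicit sequence introduces the partial order $u\preceq w$ defined by $u\in S(w)$ and invokes Zorn's lemma; there the same difficulty reappears as the need to show that every chain admits an upper bound, again by completeness and lower semicontinuity.
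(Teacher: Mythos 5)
The paper states this lemma without proof (it is quoted for completeness, with references to Ekeland's original paper and standard texts), and your argument is precisely the classical proof found in those sources: iteratively defined sets $S(x_n)$ with near-minimal selection, the halving of the gaps $f(x_n)-\inf_{S(x_n)}f$ (which works because nestedness $S(x_{n+1})\subseteq S(x_n)$ gives $\inf_{S(x_{n+1})}f\ge\inf_{S(x_n)}f$), and Cantor's intersection theorem in the complete space. Your proof is correct, including the step you flag as hard and the strict inequality in (a), which follows from $f(v)-\inf_X f<\varepsilon$.
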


\begin{lemma}[Fuzzy sum rule] \label{l02}
Suppose $X$ is Asplund,
$f_1: X \to\R$ is Lipschitz continuous and
$f_2: X \to\R_\infty$
is lower semicontinuous in a neighborhood of $\bar x$ with $f_2(\bar x)<\infty$.
Then, for any $\varepsilon>0$, there exist $x_1,x_2\in X$ with $\|x_i-\bar x\|<\varepsilon$, $|f_i(x_i)-f_i(\bar x)|<\varepsilon$ $(i=1,2)$ such that
$$
\partial (f_1+f_2) (\bar x) \subset \partial f_1(x_1) +\partial f_2(x_2) + \varepsilon\B^\ast.
$$
\end{lemma}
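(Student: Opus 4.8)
The plan is to follow the classical route of Fabian: normalise the inclusion for $\partial(f_1+f_2)(\bx)$ into a minimisation problem, decouple the two functions on the product space, and then apply the Ekeland variational principle (Lemma~\ref{l01}); the Asplund hypothesis will be needed only to supply a Fr\'echet-smooth coupling term so that genuine gradients can be extracted.

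First I would normalise. Fix $x^*\in\partial(f_1+f_2)(\bx)$. Since $\langle x^*,\cdot\rangle$ is a continuous linear, hence Fr\'echet-smooth, functional, I replace $f_1$ by $f_1-\langle x^*,\cdot\rangle$, which is still Lipschitz and whose Fr\'echet subdifferential is merely shifted by $-x^*$; this reduces matters to the case $x^*=0$, i.e. $0\in\partial(f_1+f_2)(\bx)$. Using the variational (descriptive) characterisation of the Fr\'echet subdifferential, I add a concave function that is Fr\'echet differentiable at $\bx$ with zero derivative there and absorb it into the Lipschitz part, so that $f_1+f_2$ may be assumed to attain a local minimum at $\bx$; the correction is $o(\|\cdot-\bx\|)$ and perturbs the final estimate only within the allotted $\varepsilon$. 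The target becomes: produce $x_1,x_2$ close to $\bx$ with $f_i(x_i)$ close to $f_i(\bx)$ and $x_i^*\in\partial f_i(x_i)$ satisfying $\|x_1^*+x_2^*\|\le\varepsilon$.

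Next I would decouple. On $X\times X$, equipped with the maximum norm and hence complete, consider $\Phi(u,v):=f_1(u)+f_2(v)+c\,\theta(u-v)$, where $\theta\ge0$ vanishes exactly at $0$ and $c>0$ is large. Restricted to a small closed ball around $(\bx,\bx)$ the function $\Phi$ is \lsc\ (as $f_2$ is \lsc\ and $f_1$ Lipschitz) and bounded below, so the Ekeland variational principle (Lemma~\ref{l01}) yields a point $(u_c,v_c)$ near $(\bx,\bx)$ at which $\Phi(\cdot,\cdot)+\eta\,d\big((\cdot,\cdot),(u_c,v_c)\big)$ attains its minimum for a prescribed small $\eta>0$. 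A standard penalisation argument, letting $c\to\infty$ and using the local-minimum property at $\bx$ together with the lower semicontinuity of $f_2$, forces $u_c-v_c\to0$ and then $u_c,v_c\to\bx$ with $f_i(u_c),f_i(v_c)\to f_i(\bx)$.

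Finally I would read off the subgradients, and this is where the real difficulty lies. Fermat's rule at the Ekeland point gives $0\in\partial\big[f_1(u)+f_2(v)+c\,\theta(u-v)+\eta\,d(\cdot,(u_c,v_c))\big](u_c,v_c)$. To split this into a statement about $f_1$ at $u_c$ and $f_2$ at $v_c$ I need the coupling term $c\,\theta(u-v)$ and the Ekeland distance term to be \emph{Fr\'echet differentiable}, so that their contribution is a genuine gradient of the form $(w^*,-w^*)$ plus an error of norm $\le\eta$; the exact sum rule for a \lsc\ function plus a $C^1$ function then produces $x_1^*\in\partial f_1(u_c)$ and $x_2^*\in\partial f_2(v_c)$ with $\|x_1^*+x_2^*\|\le\varepsilon$. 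The obstacle is that in a general Asplund space the norm, and hence $\theta=\|\cdot\|^2$ and the distance term, need not be Fr\'echet differentiable. This is exactly where the Asplund property is indispensable: by separable reduction one restricts to a closed separable subspace $L$ containing the relevant points and a countable dense set of test directions, chosen so that $L^*$ is separable (possible precisely because $X$ is Asplund); a space with separable dual admits an equivalent Fr\'echet-smooth norm, which supplies the required smooth coupling term on $L$. One then runs the argument in $L$ and lifts the resulting functionals back to $X^*$ by Hahn--Banach extension, checking that the $\varepsilon$-estimates survive. I expect this separable-reduction-plus-smooth-renorming step to be the main technical burden, every remaining ingredient being routine.
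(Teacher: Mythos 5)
The paper offers no proof of this lemma to compare against: it is imported verbatim as a known tool, attributed to Fabian \cite{Fab89} (cf.\ \cite[Rule~2.2]{Kru03.1}, \cite[Theorem~2.33]{Mor06.1}), and is only ever \emph{used} (in Theorem~\ref{th-f} and Proposition~\ref{5P32}). So your proposal must stand on its own, and as written it has two genuine gaps, both at the step you yourself call the crux. First, the variational principle: you apply Ekeland (Lemma~\ref{l01}) and then require ``the Ekeland distance term to be Fr\'echet differentiable.'' That is impossible, not merely hard: the perturbation $\eta\, d\bigl((\cdot,\cdot),(u_c,v_c)\bigr)$ is a norm centred exactly at the point where Fermat's rule is applied, and no norm is Fr\'echet differentiable at its origin --- smooth renorming cannot repair this. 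Trying to absorb the term instead only yields $\eta$-Fr\'echet subgradients ($\liminf \ge -\eta$ in \eqref{Frsd}), and converting those into genuine subgradients at nearby points is itself a fuzzy-calculus statement, so the argument becomes circular. This is precisely why Fabian's proof is built on the Borwein--Preiss \emph{smooth} variational principle (the title of \cite{Fab89} says as much), whose perturbations are Fr\'echet smooth under a smooth renorm; replacing Ekeland by Borwein--Preiss (or Deville--Godefroy--Zizler) is the needed fix, not an optional variant.

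Second, the lifting step: a Hahn--Banach extension of a Fr\'echet subgradient of the restriction $f_i|_L$ is in general \emph{not} a Fr\'echet subgradient of $f_i$ on $X$; restriction to a subspace preserves subgradients only in the opposite direction. The entire content of separable reduction is the construction, by a countable iteration over test points and rational parameters, of a separable subspace $L$ for which subdifferentiability of the restrictions at points of $L$ does come from subdifferentiability of the original functions with the same quantitative estimates; it is not a post hoc extension of functionals, so what you describe as ``checking that the $\varepsilon$-estimates survive'' is in fact the theorem that must be proved there. Two smaller points: your reduction to a local minimum needs the smooth variational description of Fr\'echet subgradients, whose correction term is not obviously Lipschitz and so cannot simply be ``absorbed into the Lipschitz part''; and the lemma as stated in the paper asserts one pair $(x_1,x_2)$ serving the whole set $\partial(f_1+f_2)(\bx)$, whereas your normalisation fixes a single $x^*$ and proves the per-element version (which is the form actually used in the paper's proofs).
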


Recall that the (normalized) \emph{duality mapping} \cite[Definition~3.2.6]{Lucc06} $J$ between a normed space $Y$ and its dual $Y^*$ is defined as
\begin{gather}\label{J}
J(y):=\left\{y^*\in \mathbb{S}_{Y^*}\mid \langle y^*,y\rangle=\norm{y}\right\},\; \forall y\in Y.
\end{gather}
Note that $J(-y)=-J(y)$.

The following simple fact of convex analysis is well known (cf., e.g., \cite[Corollary~2.4.16]{Zal02}).

\begin{lemma}\label{ll01}
Let $(Y,\|\cdot\|)$ be a normed space.
\begin{enumerate}
\item
$\sd\|\cdot\|(y)=J(y)$ for any $y\ne0$.
\item
$\sd\|\cdot\|(0)=\B^*$.
\end{enumerate}
\end{lemma}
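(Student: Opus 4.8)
The plan is to exploit the convexity of the norm. Since $\|\cdot\|$ is a continuous convex function, its Fr\'echet subdifferential \eqref{Frsd} coincides with the subdifferential of convex analysis (as already noted after \eqref{Fr}), so throughout I may replace the limiting condition in \eqref{Frsd} by the global subgradient inequality: $y^*\in\sd\|\cdot\|(y)$ if and only if
\[
\|u\|-\|y\|\ge\langle y^*,u-y\rangle\quad\text{for all }u\in Y.
\]
Both assertions then reduce to elementary manipulations of this inequality.

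For part (ii) I would argue directly. Taking $y=0$, the subgradient inequality becomes $\langle y^*,u\rangle\le\|u\|$ for all $u\in Y$. Replacing $u$ by $-u$ gives $\abs{\langle y^*,u\rangle}\le\|u\|$ for all $u$, which is precisely the statement $\|y^*\|\le1$, i.e.\ $y^*\in\B^*$; the converse inclusion is immediate from the dual-norm estimate $\langle y^*,u\rangle\le\|y^*\|\,\|u\|$. Hence $\sd\|\cdot\|(0)=\B^*$.

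For part (i), fix $y\neq0$ and prove the two inclusions separately. The inclusion $J(y)\subseteq\sd\|\cdot\|(y)$ is the easy one: if $y^*\in J(y)$, then $\|y^*\|=1$ and $\langle y^*,y\rangle=\|y\|$, so $\langle y^*,u-y\rangle=\langle y^*,u\rangle-\|y\|\le\|u\|-\|y\|$, which is exactly the subgradient inequality. For the reverse inclusion I would extract the two defining properties of $J(y)$ from the subgradient inequality via suitable test points: substituting $u=0$ yields $\langle y^*,y\rangle\ge\|y\|$, while substituting $u=2y$ yields $\langle y^*,y\rangle\le\|y\|$, so that $\langle y^*,y\rangle=\|y\|$. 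Feeding this back into the subgradient inequality gives $\langle y^*,u\rangle\le\|u\|$ for every $u$, whence $\|y^*\|\le1$; combined with $\langle y^*,y/\|y\|\rangle=1$ this forces $\|y^*\|=1$, so $y^*\in J(y)$.

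I do not anticipate any real obstacle: the statement is a textbook fact and the whole argument is a short computation once the reduction to the convex subgradient inequality is in place. The only point requiring the slightest care is the passage from the one-sided estimate $\langle y^*,u\rangle\le\|u\|$ (for all $u$) to the norm bound $\|y^*\|\le1$, which relies on testing both $u$ and $-u$; this is routine but is the single place where the symmetry of the norm is used.
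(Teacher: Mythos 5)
Your proof is correct. There is, however, nothing in the paper to compare it against: the paper does not prove Lemma~\ref{ll01} at all, but records it as a well-known fact of convex analysis with a citation to \cite[Corollary~2.4.16]{Zal02}. Your argument is the standard self-contained one, and it is complete. Three points worth noting. First, the reduction of the Fr\'echet subdifferential \eqref{Frsd} to the global convex subgradient inequality is sanctioned by the paper's remark following \eqref{Fr}; if you wanted full self-containment you would prove the nontrivial inclusion (every Fr\'echet subgradient of a convex function is a convex subgradient), which uses convexity through the monotonicity in $t$ of the difference quotients $t\mapsto\bigl(\|y+t(u-y)\|-\|y\|\bigr)/t$. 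Second, your test points $u=0$ and $u=2y$ are exactly the right ones to pin down $\langle y^*,y\rangle=\|y\|$. Third, your final step upgrading $\|y^*\|\le1$ to $\|y^*\|=1$ via $\langle y^*,y/\|y\|\rangle=1$ is essential and not cosmetic: the paper's definition \eqref{J} takes $J(y)$ inside the unit sphere of $Y^*$, not the unit ball, so part (i) would be false as stated without it.
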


The structure of the paper is as follows.
In the next section, we present a survey of error bound criteria for extended-real-valued functions on metric and Banach/Asplund spaces.
The criteria are formulated in terms of several kinds of primal and subdifferential slopes.
The relationships between the slopes are presented.
In Section~\ref{S31}, the definitions of the error bound property and slopes are extended to a special family of extended real-valued functions on the product of metric or Banach/Asplund spaces.
The next Section~\ref{EB2} is dedicated to the error bound criteria for functions from this family.
Finally, in Section~\ref{S3}, we demonstrate how the definitions of slopes and error bound criteria from Sections~\ref{S31} and \ref{EB2} translate into the corresponding definitions and criteria for metric subregularity of set-valued mappings.

\section{Error Bounds and Slopes}\label{S1}
In this section, we
recall several error bound criteria in terms of (several kinds of) slopes.

Below
$f:X\to\R_\infty$ is an extended-real-valued function on a metric space, $S(f):=\{x\in X\mid f(x)\le0\}$, and $f(\bx)=0$.

Function $f$ is said to have a local error bound at $\bar{x}$ with constant $\tau>0$ if there exists a neighbourhood $U$ of $\bx$ such that
\begin{equation}\label{el}
{\tau}d(x,S(f)) \le f_+(x)\quad \mbox{ for all } x \in U.
\end{equation}
The \emph{error bound modulus} \cite{FabHenKruOut10} (\emph{conditioning rate} \cite{Pen10,Pen13}):
\begin{equation}\label{rr}
\Er f(\bar{x}) :=\liminf_{\substack{x\to\bar{x},\,f(x)>0}}
\frac{f(x)}{d(x,S(f))}
\end{equation}
coincides with the exact upper bound of all $\tau>0$ such that \eqref{el} holds true for some neighbourhood $U$ of $\bx$ and provides a quantitative characterization of the \emph{error bound property}.

Recall that the local \emph{slope} \cite{DMT} of $f$ at $x$ ($f(x)<\infty$) is defined as
\begin{equation}\label{li}
|\nabla{f}|(x):=\limsup_{u\to{x},\,u\ne x}
\frac{[f(x)-f(u)]_+}{d(u,{x})}.
\end{equation}
In other words,
\begin{equation*}
|\nabla{f}|(x):=\limsup_{u\to{x},\,u\ne x}
\frac{f(x)-f(u)}{d(u,{x})}
\end{equation*}
when $x$ is not a point of local minimum of $f$ and $|\nabla{f}|(x)=0$ otherwise.
This (possibly infinite) quantity provides a convenient primal space characterization of the local behaviour of $f$ near $x$.
If $f(x)=\infty$, we set $|\nabla{f}|(x)=\infty$.

In the original publication \cite{DMT}, constant \eqref{li} was called ``strong slope'' to distinguish it from another (``weak'') construction used in the same article.
As this other construction is not widely used in the theory of error bounds, we do not provide its definition here and omit adjective ``strong'' in the name of constant \eqref{li}.
In \cite{Pen13}, constant \eqref{li} is referred to as \emph{calmness rate} or \emph{downward slope}.
Compare with the \emph{rate of steepest descent} in \cite{Dem10}.

Several modifications of \eqref{li} have been introduced in \cite{FabHenKruOut10} and further developed in \cite{BedKru12,BedKru12.2,FabHenKruOut12,NgaKruThe12,NgaiThe}.
Below we recall some of them which will be used in the rest of the paper.

An important ingredient of definition \eqref{li} (and also definitions (2.3) in \cite{NgaiThe08} and (4) in \cite{NgaiTronThe13}) is the \emph{nonlocal slope} of $f$ at $x$ ($f(x)<\infty$):
\begin{equation}\label{gli0}
|\nabla{f}|^{\diamond}(x):=\sup_{u\ne{x}}
\frac{[f(x)-f_+(u)]_+}{d(u,{x})}.
\end{equation}
Indeed, if $f(x)>0$, then
\begin{equation*}
|\nabla{f}|(x)=\lim_{\eps\downarrow{0}}
|\nabla{f}_{B_\eps(x)}|^{\diamond}(x),
\end{equation*}
where ${f}_{B_\eps(x)}$ is the restriction of $f$ to $B_\eps(x)$.

Note that definition \eqref{gli0} is not absolutely nonlocal.
The supremum in the right-hand side of \eqref{gli0} can be restricted to a certain neighbourhood of $x$ since $f_+$ is bounded from below, and consequently
$[f(x)-f_+(u)]_+/d(u,{x})\to0$ as $d(u,{x})\to\infty$.
This distinguishes \eqref{gli0} from the \emph{least slope} \cite[pp.~127--128]{Sim91} and \emph{global slope} \cite[p.~27]{AmbGigSav08}, \cite[formula (4)]{NgaiThe} where $f(u)$ was used instead of $f_+(u)$ in the corresponding definitions.

If $f$ takes only nonnegative values, then \eqref{gli0} takes a
simpler form:
\begin{equation*}\label{gli1}
|\nabla{f}|^{\diamond}(x):=\sup_{u\ne{x}}
\frac{[f(x)-f(u)]_+}{d(u,{x})}
\end{equation*}
(and coincides with the corresponding definitions in \cite{Sim91,AmbGigSav08,NgaiThe}.)

If $x\ne\bx$, then obviously
\begin{equation}\label{gli0+}
|\nabla{f}|^{\diamond}(x)\ge
\frac{f(x)}{d(x,\bx)}.
\end{equation}

In the sequel, superscript `$\diamond$' (diamond) will be used in all constructions derived from \eqref{gli0} and its analogues to distinguish them from ``conventional'' (local) definitions.

Using \eqref{li} and \eqref{gli0}, we define respectively the \emph{strict outer} \cite{FabHenKruOut10} and \emph{uniform strict outer slopes} \cite{FabHenKruOut12}
of $f$ at $\bx$:
\begin{gather}
\overline{|\nabla{f}|}{}^>(\bar{x}) :=
\liminf_{x\to\bar{x},\;f(x)\downarrow0}
|\nabla{f}|(x),
\label{sl-o}\\
\overline{|\nabla{f}|}{}^{\diamond}(\bar{x}) :=
\liminf_{x\to\bar{x},\;f(x)\downarrow0}
|\nabla{f}|^{\diamond}(x)\label{sl0}
\end{gather}
(with the usual convention that the infimum of the empty set equals $+\infty$).

The word ``strict'' reflects the fact that slopes at nearby points (local or nonlocal) contribute to definitions \eqref{sl-o} and \eqref{sl0} making them analogues of the strict derivative.
The word ``outer'' is used to emphasize that
only points outside the set $S(f)$ are taken into account.
The word ``uniform'' emphasizes the nonlocal (non-limiting) character of $|\nabla{f}|^{\diamond}(x)$ involved in definition \eqref{sl0}.

\begin{remark}\label{RM3}
Definitions \eqref{sl-o} and \eqref{sl0} corresponding to the lower $0$-level set $S(f)$ can be easily extended to the case of the general lower level set $\{x\in X\mid f(x)\le f(\bx)\}$ with an arbitrary finite $f(\bx)$.
It is sufficient to replace $f(x)\downarrow0$ in \eqref{sl-o} and \eqref{sl0} with $f(x)\downarrow f(\bx)$, cf. \cite{FabHenKruOut10,FabHenKruOut12,Ude2,Ude14}.
\end{remark}

\begin{remark}\label{RM4}
One can also consider (smaller) versions of \eqref{sl-o} and \eqref{sl0} corresponding to the one-sided limits $f(x)\downarrow0$ (or more generally $f(x)\downarrow f(\bx)$) in the definitions being replaced by the full ones: $f(x)\to0$ (or $f(x)\to f(\bx)$).
Such an analogue of \eqref{sl-o} is known as the \emph{strict slope} \cite{FabHenKruOut10} (\emph{limiting slope} \cite{Iof00_,DruIofLew,Mor06.2}); compare with the \emph{relaxed slope} \cite{AmbGigSav08} and the \emph{strong relaxed slope} \cite{RosSegSte11}.
\end{remark}

In normed linear spaces, one can use for estimating slopes and hence error bounds some other tools based on either directional derivatives or subdifferentials of some sort.
Below we describe certain tools from the second group.
Some examples of application of directional derivatives for estimating slopes and error bounds can be found, e.g., in \cite{BedKru12,Aze03,Aze06,AzeCor04,Iof03,Ude14}.

Suppose $X$ is a normed linear space.
One can define dual counterparts of the local slopes \eqref{li} and \eqref{sl-o}:
the \emph{subdifferential slope} \cite{FabHenKruOut10} (cf. the \emph{least slope} \cite{Sim91}, the \emph{nonsmooth slope} \cite{Li10}, see also \cite{Pen10,Pen13})
\begin{equation}\label{eg}
|\partial{f}|(x):=\inf_{x^*\in\sd{f}(x)} \|x^*\|
\end{equation}
of $f$ at $x$ ($f(x)<\infty$) and
the \emph{strict outer subdifferential slope} \cite{FabHenKruOut10}
\begin{gather}
\overline{|\sd{f}|}{}^>(\bar{x}):=
\liminf_{x\to\bar{x},\;f(x)\downarrow0}
|\sd{f}|(x)\label{if}
\end{gather}
of $f$ at $\bar{x}$.

Similar to the case of the primal space slopes, one can also define analogues of \eqref{if} as described in Remarks~\ref{RM3} and \ref{RM4} above, cf. \cite{FabHenKruOut10,IofOut08}.

The next proposition summarizes the relationships between the slopes.

\begin{proposition}\label{nc}
\begin{enumerate}
\item
If $0<f(x)<\infty$, then
$|\nabla{f}|(x)\le|\nabla{f}|^{\diamond}(x)$;
\item
$\overline{|\nabla{f}|}{}^>(\bar{x}) \le
\overline{|\nabla{f}|}{}^{\diamond}(\bar{x})$;
\item
$\overline{|\nabla{f}|}{}^{\diamond}(\bar{x})\ge
\ds\liminf_{x\to\bar{x},\;f(x)\downarrow0}
\frac{f(x)}{d(x,\bx)}$.
\cnta
\end{enumerate}
Suppose $X$ is a normed linear space.
\begin{enumerate}
\cntb
\item
$|\nabla{f}|(x)\le|\partial{f}|(x)$ for all $x\in X$ with $f(x)<\infty$;
\item
$\overline{|\nabla{f}|}{}^>(\bar{x})
\le\overline{|\partial{f}|}{}^>(\bar{x})$;
\item
if $X$ is Asplund and $f_+$ is lower semicontinuous near $\bar{x}$, then\\
$\overline{|\nabla{f}|}{}^>(\bar{x})
=\overline{|\partial{f}|}{}^>(\bar{x})$; \item
if $f$ is convex, then
$|\nabla{f}|(x)=|\sd{f}|(x)$ for all $x\in X$ with $f(x)<\infty$ and
$\overline{|\nabla{f}|}{}^{\diamond}(\bar{x})
=\overline{|\nabla{f}|}{}^>(\bar{x})
=\overline{|\nabla{f}|}{}^{>+}(\bar{x})
=\overline{|\partial{f}|}{}^>(\bar{x})
=\overline{|\partial{f}|}{}^{+>}(\bar{x})$.
\end{enumerate}
\end{proposition}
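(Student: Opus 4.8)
The plan is to establish the items in the order listed, deducing each strict-outer quantity from its pointwise companion by passing to the liminf along $x\to\bx$, $f(x)\downarrow0$. I would prove (i) by fixing $x$ with $0<f(x)<\infty$ and splitting according to the sign of $f(u)$. If $f(u)\ge0$, then $f_+(u)=f(u)$, so the local difference quotient $[f(x)-f(u)]_+/d(u,x)$ is exactly one of the quotients under the supremum in \eqref{gli0} and is therefore $\le|\nabla f|^{\diamond}(x)$. If $f(u)<0$, then $f_+(u)=0$, so the same $u$ contributes $f(x)/d(u,x)$ to that supremum; consequently, if there exist $u\to x$ with $f(u)<0$ we get $|\nabla f|^{\diamond}(x)=+\infty$ and the inequality is trivial, while otherwise $f\ge0$ on a punctured neighbourhood of $x$ and only the first case occurs in the $\limsup$ defining \eqref{li}. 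Either way $|\nabla f|(x)\le|\nabla f|^{\diamond}(x)$. Items (ii) and (iii) are then immediate: (ii) is the liminf of (i), and (iii) is the liminf of the elementary bound \eqref{gli0+}.

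For the normed-space slopes, (iv) is the routine consequence of \eqref{Frsd}: for any $x^*\in\sd f(x)$ the subgradient estimate gives $f(x)-f(u)\le\langle x^*,u-x\rangle+o(\|u-x\|)\le\|x^*\|\,\|u-x\|+o(\|u-x\|)$ as $u\to x$, whence $|\nabla f|(x)\le\|x^*\|$; taking the infimum over $\sd f(x)$ yields $|\nabla f|(x)\le|\partial f|(x)$ (trivially, with the usual convention, when $\sd f(x)=\emptyset$). Passing to the liminf gives (v).

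The heart of the argument is the reverse inequality in (vi), namely $\overline{|\sd f|}{}^{>}(\bx)\le\overline{|\nabla f|}{}^{>}(\bx)$, since the opposite one is (v). I would take $x_n\to\bx$ with $f(x_n)\downarrow0$ and $|\nabla f|(x_n)\to\overline{|\nabla f|}{}^{>}(\bx)=:\bar\ell$ (nothing to prove if $\bar\ell=+\infty$), fix $\eps_n\downarrow0$, and observe from \eqref{li} that $x_n$ is a local minimiser of $u\mapsto f(u)+(|\nabla f|(x_n)+\eps_n)\|u-x_n\|$, so $0$ lies in the Fr\'echet subdifferential of this sum at $x_n$. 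A crucial preliminary point is that, because $f_+$ is lower semicontinuous near $\bx$ and $f(x_n)>0$, one has $f_+>0$ hence $f=f_+$ on a whole neighbourhood of $x_n$, so $f$ itself is lower semicontinuous there and Lemma~\ref{l02} applies. The fuzzy sum rule, with the Lipschitz term $f_1(u)=(|\nabla f|(x_n)+\eps_n)\|u-x_n\|$ and $f_2=f$, then produces $u_n$ arbitrarily close to $x_n$ with $f(u_n)$ arbitrarily close to $f(x_n)$ and a subgradient $x_n^*\in\sd f(u_n)$ with $\|x_n^*\|\le|\nabla f|(x_n)+\eps_n+\eta_n$, since by Lemma~\ref{ll01} the subdifferential of the norm term lies in $(|\nabla f|(x_n)+\eps_n)\B^*$. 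Choosing the tolerances $\eta_n<f(x_n)$ keeps $f(u_n)>0$, $f(u_n)\to0$, and $u_n\to\bx$, so $\overline{|\sd f|}{}^{>}(\bx)\le\liminf_n|\partial f|(u_n)\le\bar\ell$. The hard part is precisely this step: it is where Asplundness and the lower semicontinuity of $f_+$ are indispensable, and where one must verify that the sum-rule points $u_n$ remain in the admissible region $f>0$, $f\to0$. I expect the Ekeland principle (Lemma~\ref{l01}) to be unnecessary here, because the slope already supplies an exact local minimiser.

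Finally, for the convex case (vii) I would first prove the pointwise identity $|\nabla f|(x)=|\partial f|(x)$: the inequality ``$\le$'' is (iv), while for ``$\ge$'' one uses that $f'(x;\cdot)$ is the support function of $\sd f(x)$, so a minimax argument gives $-\inf_{\|d\|=1}f'(x;d)=\sup_{x^*\in\sd f(x)}\inf_{\|d\|=1}\langle x^*,d\rangle=-|\partial f|(x)$, and the left-hand side equals $|\nabla f|(x)$ because convexity makes the quotient $[f(x)-f(u)]/\|u-x\|$ monotone along rays. That same monotonicity yields $|\nabla f|^{\diamond}(x)=|\nabla f|(x)$ for convex $f$. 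Passing to the liminf, these two pointwise identities immediately collapse $\overline{|\nabla f|}{}^{\diamond}(\bx)=\overline{|\nabla f|}{}^{>}(\bx)=\overline{|\partial f|}{}^{>}(\bx)$, and this needs no Asplund hypothesis. The remaining members $\overline{|\nabla f|}{}^{>+}(\bx)$ and $\overline{|\partial f|}{}^{+>}(\bx)$, which are the variants built from $f_+$ (equivalently, from the two-sided limit), agree with these because on the active region $f>0$ one has $f=f_+$, and convexity erases the distinction between the one-sided limit $f(x)\downarrow0$ and the full limit $f(x)\to0$; this closes the chain.
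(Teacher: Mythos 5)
Your proposal is correct in substance, and on the one nontrivial item it takes a genuinely different route from the paper. The paper does not actually prove Proposition~\ref{nc} in-line: parts (i), (ii), (iv), (v) are declared to follow directly from the definitions (with (iii) a consequence of \eqref{gli0+}), which is exactly what you do -- your case split on the sign of $f(u)$ in (i), showing that any sequence $u\to x$ with $f(u)<0$ forces $|\nabla f|^{\diamond}(x)=+\infty$, is the right way to handle the $f_+$ in \eqref{gli0} and is where the hypothesis $f(x)>0$ enters. For part (vi), however, the paper defers to \cite[Proposition~5(ii)]{FabHenKruOut10} and explicitly describes that proof as based on the Ekeland variational principle (Lemma~\ref{l01}). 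You dispense with Ekeland: finiteness of the $\limsup$ in \eqref{li} already makes $x_n$ an exact local minimiser of $u\mapsto f(u)+(|\nabla f|(x_n)+\eps_n)\|u-x_n\|$, so the fuzzy sum rule (Lemma~\ref{l02}) together with Lemma~\ref{ll01} applies directly, and choosing $\eta_n<\min\{f(x_n),1/n\}$ keeps the output points in the admissible region $f>0$, $f\to0$, $u_n\to\bx$. This is valid, and your preliminary reduction (lower semicontinuity of $f_+$ plus $f(x_n)>0$ gives $f=f_+$, hence $f$ lsc, on a whole neighbourhood of $x_n$) is precisely the point that makes Lemma~\ref{l02} applicable. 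Ekeland is genuinely needed only when one starts from \emph{approximate} minimality -- as in Theorem~\ref{3T1}, where the nonlocal slope is involved -- so your version of (vi) is shorter and loses no generality.

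Two caveats on (vii). First, your minimax argument for $|\nabla f|(x)\ge|\partial f|(x)$ is the weakest step: the identity $f'(x;\cdot)=\sup_{x^*\in\sd f(x)}\langle x^*,\cdot\rangle$ and the inf--sup interchange over the non-compact unit sphere need justification, and $\sd f(x)$ may be empty or unbounded. The cleaner route, consistent with the rest of your argument, is: if $|\nabla f|(x)<s$, then $x$ is a local and hence (by convexity) global minimiser of $f+s\|\cdot-x\|$, so the Moreau--Rockafellar sum rule gives $0\in\sd f(x)+s\B^*$, i.e. $|\sd f|(x)\le s$; this works in any normed space, matching the paper's remark that Banach/lsc assumptions are superfluous here. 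Second, the quantities $\overline{|\nabla{f}|}{}^{>+}(\bar{x})$ and $\overline{|\partial{f}|}{}^{+>}(\bar{x})$ are never defined in this paper (they are imported from \cite{FabHenKruOut10}, to whose Theorem~5 the paper defers), so your reading of them as the $f_+$/two-sided-limit variants, while plausible and handled correctly under that reading, cannot be checked against the text itself.
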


Parts (i), (ii), (iv), and (v) of Proposition~\ref{nc} follow directly from the definitions, see also \cite{Aze06,Iof00_}.
Part (iii) is a consequence of \eqref{gli0+}.
Part (vi) was proved in \cite[Proposition~5(ii)]{FabHenKruOut10} using the Ekeland variational principle (Lemma~\ref{l01}), cf. \cite[Lemma 2.1]{MenYan12}, \cite[Remark~3.2]{Ude2}.
The first equality in (vii) can be found in numerous publications,
cf. \cite{AzeCor02,AzeCor04,Ude05,AmbGigSav08,CorMot08,Iof08.2, Ude10.2,Iof13.2}.
For the other equalities in (vii), see \cite[Theorem~5]{FabHenKruOut10}.
Note that in most publications cited above, $X$ is assumed a Banach space and $f$ \lsc, but these additional assumptions seem to be superfluous.

The uniform strict slope \eqref{sl0} provides the necessary and sufficient characterization of error bounds, cf. \cite[Theorem~1]{FabHenKruOut12}.

\begin{theorem}\label{ a}
\begin{enumerate}
\item [(i)]\label{ a.1}
$\displaystyle{\rm Er}\,{f}(\bar{x}) \le
\overline{|\nabla{f}|}{}^{\diamond}(\bar{x})$;
\item [(ii)]\label{ a.2}
if $X$ is a Banach space and $f_+$ is lower semicontinuous near $\bar{x}$, then
$\displaystyle{\rm Er}\,{f}(\bar{x}) =
\overline{|\nabla{f}|}{}^{\diamond}(\bar{x})$.
\end{enumerate}
\end{theorem}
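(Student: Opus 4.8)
The plan is to establish the two inequalities $\Er f(\bx)\le\overline{|\nabla{f}|}{}^{\diamond}(\bx)$ (which is (i) and holds in any metric space) and $\Er f(\bx)\ge\overline{|\nabla{f}|}{}^{\diamond}(\bx)$ (the reverse inequality needed for (ii), where completeness of $X$ and lower semicontinuity of $f_+$ enter). The first is an elementary consequence of the definitions; the second is the substantial part and will rest on the Ekeland variational principle (Lemma~\ref{l01}), following the standard error-bound template.

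For (i) the key observation is that feeding a point of the level set into the supremum defining the nonlocal slope reproduces the error-bound ratio. Fix $x$ with $f(x)>0$. For every $u\in S(f)$ one has $f_+(u)=0$, hence $|\nabla{f}|^{\diamond}(x)\ge[f(x)]_+/d(u,x)=f(x)/d(u,x)$; taking the infimum over $u\in S(f)$, which is nonempty since $\bx\in S(f)$, yields $|\nabla{f}|^{\diamond}(x)\ge f(x)/d(x,S(f))$. Passing to the $\liminf$ as $x\to\bx$ with $f(x)\downarrow0$ and noting that this net is a subnet of the one defining $\Er f(\bx)$ (so that its $\liminf$ is no smaller) gives (i). This also recovers, a fortiori, part (iii) of Proposition~\ref{nc}, since $d(x,S(f))\le d(x,\bx)$.

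For (ii) it remains to prove $\Er f(\bx)\ge\tau$ where $\tau:=\overline{|\nabla{f}|}{}^{\diamond}(\bx)$; I may assume $\tau>0$, the case $\tau=0$ being trivial. I would fix an arbitrary $\gamma\in(0,\tau)$ and, using the definition of the $\liminf$, choose $\delta>0$ so small that $f_+$ is \lsc\ on the closed ball $B_\delta(\bx)$ and $|\nabla{f}|^{\diamond}(x)>\gamma$ whenever $d(x,\bx)\le\delta$ and $0<f(x)\le\delta$. The target is a neighbourhood $U$ on which $\gamma d(x,S(f))\le f_+(x)$; then $\Er f(\bx)\ge\gamma$, and letting $\gamma\uparrow\tau$ concludes. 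Given $x\in U$ with $0<f(x)$ small, I would apply Lemma~\ref{l01} to the \lsc, bounded-below function $f_+$ on the complete metric space $B_\delta(\bx)$, with starting point $v=x$, a parameter $\eps$ slightly larger than $f_+(x)=f(x)$, and $\lambda=\eps/\gamma$. This produces $z\in B_\delta(\bx)$ with $d(z,x)<\lambda$, $f_+(z)\le f_+(x)$, and the minimality property $f_+(u)+\gamma d(u,z)\ge f_+(z)$ for all $u\in B_\delta(\bx)$. The decisive step is to show $z\in S(f)$: if instead $f(z)>0$, then $0<f(z)=f_+(z)\le f(x)\le\delta$ and $d(z,\bx)\le\delta$, so the slope hypothesis gives $|\nabla{f}|^{\diamond}(z)>\gamma$, i.e.\ some $u\ne z$ with $f_+(u)+\gamma d(u,z)<f(z)=f_+(z)$, contradicting minimality. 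Hence $f(z)\le0$, so $d(x,S(f))\le d(x,z)<\lambda=\eps/\gamma$, and letting $\eps\downarrow f(x)$ yields $\gamma d(x,S(f))\le f(x)=f_+(x)$.

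The main obstacle is a localization gap: the Ekeland minimality only holds over $B_\delta(\bx)$, whereas $|\nabla{f}|^{\diamond}(z)$ is a supremum over all of $X$, so the descent point $u$ witnessing $|\nabla{f}|^{\diamond}(z)>\gamma$ must be shown to lie in $B_\delta(\bx)$ before the contradiction closes. I expect to handle this quantitatively: any such $u$ satisfies $d(u,z)<f(z)/\gamma$, which is small, and combined with the Ekeland control $d(z,x)<\eps/\gamma$ and the smallness of $f(x)$ and of the radius $\rho$ of $U=B_\rho(\bx)$, both $z$ and every admissible $u$ stay strictly inside $B_\delta(\bx)$. Balancing these radii and the threshold on $f(x)$ against the fixed localization radius $\delta$ coming from the slope hypothesis is the only delicate bookkeeping; the rest is routine.
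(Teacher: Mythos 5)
Your proposal is correct and rests on exactly the same machinery as the paper's own proof (which establishes Theorem~\ref{ a} via its two-variable generalization, Theorem~\ref{3T1}): part (i) by inserting near-optimal points of $S(f)$ into the supremum defining the nonlocal slope, and part (ii) by applying the Ekeland variational principle (Lemma~\ref{l01}) to $f_+$ on a small closed ball around $\bx$ with penalty rate $\ga$, including the same localization device of extending/contradicting the Ekeland minimality only after checking the relevant points stay inside the ball. The only difference is organizational rather than mathematical: you argue directly (slope bound $\Rightarrow$ the Ekeland output lands in $S(f)$, giving the error bound at every nearby point), whereas the paper argues contrapositively, starting Ekeland from a point violating the error-bound estimate with $\lambda$ proportional to $d(z,S(f))$ and showing the perturbed point lies outside $S(f)$ with nonlocal slope at most $\ga$; the radius bookkeeping you defer (witnesses $u$ satisfy $d(u,z)<f(z)/\ga$, plus the trivial handling of points with $f(x)$ not small) is precisely the routine part the paper carries out explicitly.
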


\begin{remark}
Analyzing the proof of \cite[Theorem~1]{FabHenKruOut12} (or more general Theorem~\ref{3T1} in Section~\ref{EB2}), one can see that Theorem~\ref{ a} remains true if the nonlocal slope \eqref{gli0} is replaced in definition \eqref{sl0} of the uniform strict slope by a smaller ``restricted'' nonlocal slope
\begin{equation}\label{ler}
|\nabla{f}|^{\diamond}_{D(x)}(x):=\sup_{u\in D(x),\,u\ne{x}}
\frac{[f(x)-f_+(u)]_+}{d(u,{x})},
\end{equation}
where $D(x)$ is any subset of $X$ containing $S(f)$.
For instance, one can take $D(x)=\{u\in X\mid d(u,S(f))\le d(x,S(f))\}$.
In this case (and under the natural assumption that $f(x)>0$), \eqref{ler} reduces to the \emph{subslope} of $f$ at $x$ introduced in \cite{ChaChe14}.
Another obvious possibility is to take $D(x)=S(f)$ in which case \eqref{ler} becomes
\begin{equation*}
|\nabla{f}|^{\diamond}_{S(f)}(x)=
\frac{f_+(x)}{d(x,S(f))}
\end{equation*}
(with the convention $0/0=0$).
Substituting this quantity into \eqref{sl0} instead of $|\nabla{f}|^{\diamond}(x)$ makes \eqref{sl0} trivially equal to ${\rm Er}\,{f}(\bar{x})$.
\end{remark}

Thanks to Theorem~\ref{ a} and Proposition~\ref{nc}, one can formulate several quantitative and qualitative criteria of the error bound property in terms of various slopes.

\begin{corollary}\label{_a.1}
Let $\ga>0$.
Consider the following conditions:
\renewcommand {\theenumi} {\alph{enumi}}
\begin{enumerate}
\item
$f$ has a local error bound at $\bx$ with constant $\tau>0$;
\item\label{9b}
$\overline{|\nabla{f}|}{}^{\diamond}(\bar{x})>\ga$,\\ i.e., for some $\rho>0$ and any $x\in B_\rho(\bx)$ with $0<f(x)<\rho$, it holds $|\nabla{f}|^{\diamond}(x)>\ga$, and consequently, there is a $u\in X$ such that
\begin{gather*}\label{_a.11}
f(x)-f_+(u)>\ga d(u,x);
\end{gather*}
\item
$\ds\liminf_{x\to\bar{x},\;f(x)\downarrow0}
\frac{f(x)}{d(x,\bx)}>\ga$; \item\label{9c}
$\overline{|\nabla{f}|}{}^{>}(\bar{x})>\ga$,\\ i.e., for some $\rho>0$ and any $x\in B_\rho(\bx)$ with $0<f(x)<\rho$, it holds $|\nabla{f}|(x)>\ga$, and consequently, for any $\eps>0$, there is a $u\in B_\eps(x)$ such that
\begin{gather}\label{cor9}
f(x)-f(u)>\ga d(u,x);
\end{gather}
\item
$\ds\liminf_{x\to\bar{x},\;f(x)\downarrow0}
\max\left\{|\nabla{f}|(x), \frac{f(x)}{d(x,\bx)}\right\}>\ga$,\\
i.e., for some $\rho>0$ and any $x\in B_\rho(\bx)$ with $0<f(x)<\rho$ and $f(x)/d(x,\bx)\le\ga$, it holds $|\nabla{f}|(x)>\ga$, and consequently, for any $\eps>0$, there is a $u\in B_\eps(x)$ such that \eqref{cor9} holds true;
\item\label{9d}
$X$ is a normed space and $\overline{|\sd{f}|}{}^{>}(\bar{x})>\ga$,\\ i.e., for some $\rho>0$ and any $x\in B_\rho(\bx)$ with $0<f(x)<\rho$, it holds $|\sd{f}|(x)>\ga$, and consequently $\|x^*\|>\ga$ for all $x^*\in\sd f(x)$;
\item
$\ds\liminf_{x\to\bar{x},\;f(x)\downarrow0}
\max\left\{|\sd{f}|(x), \frac{f(x)}{\|x-\bx\|}\right\}>\ga$,\\
i.e., for some $\rho>0$ and any $x\in B_\rho(\bx)$ with $0<f(x)<\rho$ and $f(x)/\|x-\bx\|\le\ga$, it holds $|\sd{f}|(x)>\ga$, and consequently $\|x^*\|>\ga$ for all $x^*\in\sd f(x)$.
\end{enumerate}
\renewcommand {\theenumi} {\roman{enumi}}
The following implications hold true:
\begin{enumerate}
\item
{\rm (c) \folgt (e)};
\item
{\rm \eqref{9c} \folgt (e)};
\item
{\rm (e) \folgt \eqref{9b}};
\item
if $\ga<\tau$, then {\rm (a) \folgt \eqref{9b}};
\item
if $X$ is a normed space, then \eqref{9c}~\folgt \eqref{9d} and {\rm (e)~\folgt (g)}.
\cnta
\end{enumerate}
Suppose $X$ is complete and $f_+$ is lower semicontinuous near $\bar{x}$. Then,
\begin{enumerate}
\cntb
\item
if $\tau\le\ga$, then {\rm \eqref{9b}~\folgt (a)}.
\cnta
\end{enumerate}
Suppose, additionally, that $X$ is a Banach space. Then,
\begin{enumerate}
\cntb
\item
if $X$ is Asplund, then \eqref{9c}~\iff \eqref{9d} and {\rm (e)~\iff (g)};
\item
if $f$ is convex, then \eqref{9b}~\iff \eqref{9c}~\iff \eqref{9d}.
\end{enumerate}
\end{corollary}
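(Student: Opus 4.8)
The statement is essentially a bookkeeping consequence of Theorem~\ref{ a} and Proposition~\ref{nc} together with the elementary bound \eqref{gli0+}. The plan is to dispose of the ``pointwise'' implications (i)--(iii) and (v) first, then handle the two error-bound implications (iv) and (vi) through Theorem~\ref{ a}, and finally deal with the Banach/Asplund and convex refinements (vii)--(viii) via Proposition~\ref{nc}(vi),(vii); only the Asplund part of (vii) will require genuine work.

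The heart of the pointwise block is the observation that, for every $x\ne\bx$ with $0<f(x)<\infty$,
\begin{equation*}
|\nabla{f}|^{\diamond}(x)\ge\max\left\{|\nabla{f}|(x),\,\frac{f(x)}{d(x,\bx)}\right\},
\end{equation*}
which simply combines Proposition~\ref{nc}(i) with \eqref{gli0+}. Applying $\ds\liminf_{x\to\bx,\,f(x)\downarrow0}$ to both sides, the left-hand side becomes $\overline{|\nabla{f}|}{}^{\diamond}(\bx)$ while the right-hand side dominates the quantity figuring in (e); this is exactly (iii), i.e. (e)$\,\Rightarrow\,$(b). Implications (i) and (ii) are even more immediate: the bounds $\max\{a,b\}\ge b$ and $\max\{a,b\}\ge a$ give (c)$\,\Rightarrow\,$(e) and (d)$\,\Rightarrow\,$(e) directly under the $\liminf$, the latter using definition \eqref{sl-o} of $\overline{|\nabla{f}|}{}^{>}(\bx)$. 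For (v), in a normed space $d(x,\bx)=\|x-\bx\|$, so Proposition~\ref{nc}(iv) gives $|\nabla{f}|(x)\le|\sd{f}|(x)$ pointwise and Proposition~\ref{nc}(v) gives $\overline{|\nabla{f}|}{}^{>}(\bx)\le\overline{|\sd{f}|}{}^{>}(\bx)$; monotonicity of $\max$ and of $\liminf$ then yields both (d)$\,\Rightarrow\,$(f) and (e)$\,\Rightarrow\,$(g).

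The two error-bound implications rest on the fact that $\Er f(\bx)$ is the exact upper bound of the admissible error-bound constants. For (iv), an error bound with constant $\tau$ forces $\Er f(\bx)\ge\tau$, whence Theorem~\ref{ a}(i) gives $\overline{|\nabla{f}|}{}^{\diamond}(\bx)\ge\Er f(\bx)\ge\tau>\ga$, which is (b). For (vi) I would invoke the reverse inequality $\overline{|\nabla{f}|}{}^{\diamond}(\bx)\le\Er f(\bx)$: this is the nontrivial half of the equality in Theorem~\ref{ a}(ii), namely the one produced by the Ekeland variational principle (Lemma~\ref{l01}), whose proof requires only completeness of $X$ and lower semicontinuity of $f_+$ -- precisely the standing hypotheses of this block. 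Then (b) yields $\Er f(\bx)\ge\overline{|\nabla{f}|}{}^{\diamond}(\bx)>\ga\ge\tau$, so $\tau<\Er f(\bx)$ and the error bound holds with constant $\tau$, i.e. (a).

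For the Banach refinements, (d)$\,\Leftrightarrow\,$(f) is read off from Proposition~\ref{nc}(vi) (Asplund, $f_+$ lsc), and the convex chain (b)$\,\Leftrightarrow\,$(d)$\,\Leftrightarrow\,$(f) from Proposition~\ref{nc}(vii), where the three slopes share a common value. The one genuinely delicate step, and the one I expect to be the main obstacle, is the Asplund equivalence (e)$\,\Leftrightarrow\,$(g): the direction (e)$\,\Rightarrow\,$(g) is already contained in (v), but for (g)$\,\Rightarrow\,$(e) the pointwise inequality $|\nabla{f}|\le|\sd{f}|$ runs the wrong way, so no single-point argument suffices. Reading (e) and (g) through their reformulations in the statement, both assert that the relevant slope exceeds $\ga$ on the region $\{x:\,f(x)/\|x-\bx\|\le\ga\}$ near $\bx$; negating (e) produces points $x_k\to\bx$ in an $\eps$-relaxation of that region with $f(x_k)\downarrow0$ and $|\nabla{f}|(x_k)\le\ga+\eps_k$, $\eps_k\downarrow0$. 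From each $x_k$ I would run the fuzzy-sum-rule construction underlying Proposition~\ref{nc}(vi) -- an Ekeland step applied to $f$ perturbed by $(\ga+\eps_k)\|\cdot-x_k\|$, followed by Lemma~\ref{l02} -- to obtain a nearby point $x_k'$ with $|\sd{f}|(x_k')\le\ga+\eps_k'$. The extra bookkeeping absent from nc(vi) is to check that $x_k'$ stays in the region: since $x_k'$ may be taken arbitrarily close to $x_k$, one has $f(x_k')\approx f(x_k)$ and $\|x_k'-\bx\|\approx\|x_k-\bx\|$, so the quotient at $x_k'$ remains below $\ga$. These points violate (g), which establishes (g)$\,\Rightarrow\,$(e) and completes (vii).
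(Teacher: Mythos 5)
Your proposal is correct and follows the same route the paper intends: the paper gives no explicit proof of this corollary, presenting it as a direct consequence of Theorem~\ref{ a}, Proposition~\ref{nc} and inequality \eqref{gli0+}, which is exactly your plan for (i)--(vi), (viii) and half of (vii). Two of your additions are worth highlighting because they fill real gaps in that one-line justification. For (vi), you correctly note that Theorem~\ref{ a}(ii) is stated for Banach spaces while this block of the corollary assumes only completeness of the metric space $X$, and that the inequality actually needed, $\overline{|\nabla{f}|}{}^{\diamond}(\bar{x})\le\Er f(\bar{x})$, comes from the Ekeland half of the proof, which uses completeness and lower semicontinuity of $f_+$ only (equivalently, one may quote the metric-space Theorem~\ref{3T1}(ii) specialized via \eqref{fti}). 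For (g)~$\Rightarrow$~(e) in (vii), you rightly observe that Proposition~\ref{nc}(vi) as a black box is insufficient, since it equates the unrestricted strict outer slopes whereas (e) and (g) are liminfs of maxima; your localized rerun of the fuzzy-sum-rule construction, with perturbations of size $o\bigl(\min\{f(x_k),\|x_k-\bar{x}\|\}\bigr)$ so that the quotient $f(x)/\|x-\bar{x}\|$ stays asymptotically at or below $\ga$ along the new points and (g) is thereby violated, is the correct repair. One minor simplification: the Ekeland step you include there is superfluous, because $|\nabla{f}|(x_k)<c$ already makes $x_k$ a local minimizer of $u\mapsto f(u)+c\,d(u,x_k)$, so Lemma~\ref{l02} applies directly.
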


Criterion \eqref{9b} in the above proposition is a version of \cite[Basic Lemma]{Iof00_}; see also, \cite[Theorem 2(ii)]{Ham94}, \cite[Theorem~1]{WuYe02}, \cite[Theorem~3.1]{Wu03}, \cite[Corollary~2.3]{NgaiThe08}, \cite[Corollary~4.3]{KlaKruKum12}, \cite[Remark~6.2.2]{GobLop14}.

Criteria \eqref{9c} and \eqref{9d} can be found, e.g., in \cite[Theorem~2.1]{IofOut08}; see also \cite[Theorem~1]{Iof79}, \cite[Theorem~3.1]{Ye98}, \cite[Theorem~3.1]{LedZhu99}, \cite[Theorem~2.4]{Jou00}, \cite[Theorem~5.2]{AzeCor04}, \cite[Corollary~3.1 and Theorem~3.2]{NgaiThe04}, \cite[Corollary~2]{NgaiThe09}, \cite[Theorem~4.12]{Pen10}, \cite[(1.8)]{ZheNg10}, \cite[Proposition~2.1]{MenYan12}, \cite[(R1)]{ZheWei12}, \cite[Corollary~4.5]{ApeDurStr13}, \cite[Corollary~1]{NgaiTronThe13}, \cite[Theorem~3.2]{WeiHe14}, \cite[Corollary~4.1]{ZheHe14}).

Criterion (e) is a combination of criteria (c) and (d), while criterion (g) is a combination of criteria (c) and (f).

The equivalence of (a) and (f) in the convex case can be found, e.g., in \cite[Theorem~2.5]{WuYe03}, \cite[(R1) and (R2)]{ZheWei12}.

\begin{corollary}\label{C1}
Suppose $X$ is complete and $f_+$ is lower semicontinuous near $\bar{x}$. Then,
$f$ has a local error bound at $\bx$ provided that one of the following conditions holds true:
\renewcommand {\theenumi} {\alph{enumi}}
\begin{enumerate}
\item
$\overline{|\nabla{f}|}{}^{\diamond}(\bar{x})>0$; \item
$\ds\liminf_{x\to\bar{x},\;f(x)\downarrow0}
\frac{f(x)}{d(x,\bx)}>0$; \item
$\overline{|\nabla{f}|}{}^{>}(\bar{x})>0$;
\item
$\ds\liminf_{x\to\bar{x},\;\frac{f(x)}{d(x,\bx)}\downarrow0}
|\nabla{f}|(x)>0$;
\item
$X$ is an Asplund space and $\overline{|\sd{f}|}{}^{>}(\bar{x})>0$; \item
$X$ is an Asplund space and $\ds\liminf_{x\to\bar{x},\;\frac{f(x)}{\|x-\bx\|}\downarrow0}
|\sd{f}|(x)>0$. \end{enumerate}
\renewcommand {\theenumi} {\roman{enumi}}
Moreover,
\begin{enumerate}
\item
condition {\rm (a)} is also necessary for the local error bound property of $f$ at $\bx$;
\item
{\rm (b) \folgt (d)};
\item
{\rm (c) \folgt (d)};
\item
{\rm (d) \folgt (a)};
\item
{\rm (e) \folgt (f)};
\item
if $X$ is Asplund, then {\rm (e) \iff (c)} and {\rm (f) \iff (d)}.
\end{enumerate}
\end{corollary}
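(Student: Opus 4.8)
The plan is to funnel all six sufficient conditions into condition~(a) and then exploit the fact that, under completeness of $X$ and lower semicontinuity of $f_+$, condition~(a) is equivalent to the error bound property. Granting the ``moreover'' implications for the moment, parts (ii)--(iv) give (b) \folgt (d), (c) \folgt (d) and (d) \folgt (a), while under the Asplund hypothesis part~(vi) gives (e) \iff (c) and (f) \iff (d); hence each of (a)--(f) implies~(a). To close the argument it remains to see that (a) yields a local error bound: choosing any $\ga\in(0,\overline{|\nabla f|}{}^{\diamond}(\bx))$ and applying Corollary~\ref{_a.1}(vi) with $\tau=\ga$ does this, and at the same time supplies the sufficiency needed in the main assertion. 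The necessity claim~(i) is then immediate from Theorem~\ref{ a}(i): a local error bound forces $\Er f(\bx)>0$, whence $\overline{|\nabla f|}{}^{\diamond}(\bx)\ge\Er f(\bx)>0$.

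The common mechanism behind (ii)--(iv) is the elementary remark that if $x\to\bx$ with $0<f(x)/d(x,\bx)\to0$, then $f(x)=\bigl(f(x)/d(x,\bx)\bigr)d(x,\bx)\to0$ with $f(x)>0$; consequently the index set defining the liminf in~(d) is contained in the one defining~(c). The liminf over the smaller set is therefore at least as large, which is~(iii); and since~(b) keeps $f(x)/d(x,\bx)$ bounded away from $0$ for all small positive values of $f$, it renders the index set of~(d) empty near $\bx$, making that liminf $+\infty$, which is~(ii). For~(iv) I would combine \eqref{gli0+} with Proposition~\ref{nc}(i) to get $|\nabla f|^{\diamond}(x)\ge\max\{|\nabla f|(x),\,f(x)/d(x,\bx)\}$ and then argue by contradiction: were $\overline{|\nabla f|}{}^{\diamond}(\bx)=0$, there would be a sequence $x_n\to\bx$, $f(x_n)\downarrow0$, along which both $|\nabla f|(x_n)\to0$ and $f(x_n)/d(x_n,\bx)\to0$; this sequence lies in the index set of~(d) and forces $|\nabla f|$ to $0$ there, contradicting~(d).

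Part~(v) is proved verbatim as~(iii), with $|\sd f|$ replacing $|\nabla f|$ and the constraint $f(x)/\|x-\bx\|\downarrow0$ replacing $f(x)\downarrow0$. In~(vi) the equivalence (e) \iff (c) is exactly Proposition~\ref{nc}(vi), which equates $\overline{|\sd f|}{}^{>}(\bx)$ and $\overline{|\nabla f|}{}^{>}(\bx)$ under the Asplund and lower semicontinuity assumptions, so one is positive iff the other is. For (f) \iff (d), observe that in a normed space $d(x,\bx)=\|x-\bx\|$, so~(d) and~(f) share the same index set, and the pointwise estimate $|\nabla f|(x)\le|\sd f|(x)$ of Proposition~\ref{nc}(iv) immediately yields (d) \folgt (f).

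The reverse implication (f) \folgt (d) is the main obstacle and the only place where the subdifferential machinery is really needed. The idea is to reprove, over the restricted index set, the nontrivial half of Proposition~\ref{nc}(vi), arguing by contraposition: if~(d) fails there is a sequence $x_n\to\bx$ with $f(x_n)/\|x_n-\bx\|\to0^+$ and $|\nabla f|(x_n)\to0$, so each $x_n$ is an approximate minimizer of $f+\ga_n\|\cdot-x_n\|$ for suitable $\ga_n\downarrow0$; the fuzzy sum rule (Lemma~\ref{l02}, preceded if necessary by an application of Lemma~\ref{l01}) then produces $u_n$ with some $u_n^\ast\in\sd f(u_n)$ and $\|u_n^\ast\|\le\ga_n+\eps_n\to0$, whence $|\sd f|(u_n)\to0$ and~(f) fails as well. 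The delicate point is that the sum rule only places $u_n$ \emph{near} $x_n$, whereas I need $u_n$ to stay in the restricted index set, i.e.\ $f(u_n)/\|u_n-\bx\|\to0^+$. This is arranged by taking the tolerance $\eps_n$ small relative to the ambient scale---for instance $\eps_n=\tfrac12 f(x_n)$---which keeps $f(u_n)>0$, forces $\|u_n-\bx\|=\|x_n-\bx\|(1+o(1))$ and $f(u_n)=f(x_n)(1+o(1))$, and hence $f(u_n)/\|u_n-\bx\|\to0$; the remaining estimates then proceed as in the unrestricted case.
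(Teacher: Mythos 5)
Your proof is correct and follows essentially the same route as the paper, which presents Corollary~\ref{C1} as an immediate qualitative restatement of Corollary~\ref{_a.1} (hence of Theorem~\ref{ a} and Proposition~\ref{nc}): your funneling of (b)--(f) into (a) via the index-set containments, the max-inequality argument combining \eqref{gli0+} with Proposition~\ref{nc}(i) for (d) $\Rightarrow$ (a), and the appeals to Corollary~\ref{_a.1}(vi) and Theorem~\ref{ a}(i) for sufficiency and necessity of (a) are exactly the derivation the paper leaves implicit. Your explicit fuzzy-sum-rule argument for (f) $\Rightarrow$ (d) soundly fills in the one implication the paper subsumes under Corollary~\ref{_a.1}(vii); the only detail worth adding there is that, since $f(x_n)>0$ and $f_+$ is lower semicontinuous near $x_n$, lower semicontinuity forces $f=f_+>0$ on a whole neighbourhood of $x_n$, which is what legitimizes applying Lemma~\ref{l02} to $f$ itself (and also shows no appeal to Lemma~\ref{l01} is needed, as $x_n$ is a genuine local minimizer of $f+\eta_n\|\cdot-x_n\|$ for any $\eta_n>|\nabla f|(x_n)$).
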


\begin{remark}
Conditions (b)--(f) in Corollary~\ref{C1} are not necessary.
They can fail for \lsc\ and even Lipschitz continuous functions on $\R$ possessing the error bound property, cf. \cite[Examples~7 and 8]{FabHenKruOut10}.
\end{remark}

Criterion (f) can be found in \cite{Gfr11}.

\begin{remark}
One of the main tools in the proof of inequality
$$\overline{|\nabla{f}|}{}^>(\bar{x})
\ge\overline{|\partial{f}|}{}^>(\bar{x})$$
in Proposition~\ref{nc}(vi) which is crucial for the sufficient error bound criterion in Corollary~\ref{C1}(e) is the fuzzy sum rule (Lemma~\ref{l02}) for Fr\'echet subdifferentials in Asplund spaces.
The inequality and the corresponding sufficient criterion can be extended to general Banach spaces.
For that, one has to replace Fr\'echet subdifferentials with some other (possibly abstract) subdifferentials on the given space satisfying a certain set of natural properties including a kind of sum rule (\emph{trustworthy} subdifferentials \cite{Iof83,Iof00_}), cf. \cite[Proposition~1.13]{Aze03}, \cite[Proposition~2.3]{Aze06}, \cite[Proposition~4.1]{AzeCor04}, \cite[Proposition~6]{FabHenKruOut10},
e.g., \emph{Ioffe approximate} or \emph{Clarke} subdifferentials.
Note that the opposite inequality guaranteed by Proposition~\ref{nc}(vi) is specific for Fr\'echet subdifferentials and cannot be extended beyond Asplund spaces unless $f$ is convex near $\bx$, cf. Proposition~\ref{nc}(vii).
\end{remark}

\begin{remark}
The seemingly more general case of nonlinear error bounds, i.e., when the linear estimate \eqref{el} is replaced by the inequality
\begin{equation*}
d(x,S(f)) \le\varphi(f_+(x))\quad \mbox{ for all } x \in U,
\end{equation*}
where $\varphi:\R_+\to\R_+$ is a given function, can be handled within the framework of the discussed above ``linear'' theory for the composite function $x\mapsto\varphi(f_+(x))$.
To apply the criteria in Corollaries~\ref{_a.1} and \ref{C1}, one needs to compute the slopes of this function, i.e., use some calculus of slopes which is pretty simple, e.g., in the typical case of H\"older-type estimates $x\mapsto(f_+(x))^q$ with $q\in(0,1)$.
\end{remark}

\section{Error Bounds and Slopes for Functions of Two Variables}\label{S31}
Now we consider a more general model when function $f$ depends on two variables: $f:X\times Y\to\R_\infty$.
Both $X$ and $Y$ are metric spaces.


We assume that $f(\bx,\by)=0$, and $f$ depends on its second variable in a special way:
\begin{itemize}
\item [(P1)]
$f(x,y)>0$ if $y\ne\by$,
\item [(P2)]
$\ds\liminf_{f(x,y)\downarrow0}\frac{f(x,y)}{d(y,\by)}>0$.
\end{itemize}
In particular, $f(x,y)\downarrow0\;\Rightarrow\;y\to\by$.

Observe that the case of a function $f:X\to\R_\infty$ of a single variable can be covered by considering its extension $\tilde f:X\times Y\to\R_\infty$ defined, for some $\by\in Y$, by
\begin{equation}\label{fti}
\tilde f(x,y)=
\begin{cases}
f(x) &\mbox{if } y=\by,
\\
\infty &\mbox{otherwise}.
\end{cases}
\end{equation}
Conditions (P1) and (P2) are obviously satisfied.

\subsection{Error bounds}
We are interested in a special kind of error bounds of $f$ with respect to the first argument.

We say that $f$ has an \emph{error bound} with respect to $x$ at $(\bx,\by)$ with constant $\tau>0$ if there exists a neighbourhood $U$ of $\bx$ such that
\begin{equation}\label{eb}
\tau d(x,S(f)) \le f_+(x,y)\quad \mbox{for all } x\in U,\; y\in Y,
\end{equation}
where
$S(f):=\{x\in X|\ f(x,y)\le0 \mbox{ for some } y\in Y\}$.
In view of (P1),
$$S(f)=\{x\in X|\ f(x,\by)\le0\}.$$

Of course, \eqref{eb} is equivalent to
\begin{equation*}
\tau d(x,S(f)) \le\inf_{y\in Y} f(x,y)\quad \mbox{for all } x\in U,
\end{equation*}
which is the usual error bound property for the function $x\mapsto\inf_{y\in Y} f(x,y)$, but for the goals of the current paper, it is more appropriate to use the setting of \eqref{eb}.

The error bound property \eqref{eb} can be equivalently characterized using the following modification of \eqref{rr}:
\begin{gather}\label{rr2}
\Er f(\bx,\by):=
\liminf_{\substack{x\to\bx\\f(x,y)>0}} \frac{f(x,y)}{d(x,S(f))}.
\end{gather}

It is easy to see that, in the special case of function $\tilde f$ defined by \eqref{fti}, definition \eqref{rr2} reduces to \eqref{rr}.

Note that definition \eqref{rr2} (as well as the error bound property defined by \eqref{eb}) looks local only in $x$.
In fact, thanks to (P2), it is local in both $x$ and $y$.
Indeed, it admits the following equivalent representations.
\begin{proposition}\label{pr-f}
$\ds\Er f(\bx,\by)=
\liminf_{\substack{x\to\bx,\,y\to\by\\f(x,y)>0}} \frac{f(x,y)}{d(x,S(f))}=
\liminf_{\substack{x\to\bx,\,f(x,y)\downarrow0}} \frac{f(x,y)}{d(x,S(f))}$.
\end{proposition}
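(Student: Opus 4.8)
The plan is to label the three quantities: write $A$ for the defining expression \eqref{rr2}, $B$ for the middle $\liminf$ (carrying the extra constraint $y\to\by$), and $C$ for the right-hand $\liminf$ (carrying the constraint $f(x,y)\downarrow0$). The assertion is the chain $A=B=C$, and my strategy is to obtain the two easy inequalities $A\le B\le C$ by a set-inclusion argument and then the single nontrivial inequality $C\le A$ by a direct neighbourhood estimate.

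First I would dispose of $A\le B\le C$. All three are a $\liminf$ of the \emph{same} ratio $f(x,y)/d(x,S(f))$ taken over progressively smaller index sets: the index set for $B$ adds the requirement $y\to\by$ to that of $A$, and the index set for $C$ replaces $f(x,y)>0$ by $f(x,y)\downarrow0$. By (P2) (equivalently, the displayed consequence $f(x,y)\downarrow0\Rightarrow y\to\by$) the index set defining $C$ is contained in that defining $B$, which is contained in that defining $A$. Since a $\liminf$ over a smaller index set is no smaller, this gives $A\le B\le C$ with no further work.

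The substance is the reverse inequality $C\le A$, and this is the step I expect to be the main obstacle: the index set for $A$ admits sequences along which $f(x,y)$ need not tend to $0$ (so, by (P1) and (P2), $y$ need not tend to $\by$), and one must verify that such extra sequences cannot pull the $\liminf$ below $C$. Assuming $C>0$ (the case $C=0$ is trivial since every ratio is nonnegative, whence $A\ge0=C$), fix any $\gamma\in(0,C)$. By the definition of $C$ there are thresholds $\delta>0$ and $\eta>0$ such that $f(x,y)/d(x,S(f))>\gamma$ whenever $d(x,\bx)<\delta$ and $0<f(x,y)<\eta$. The key auxiliary observation is that $\bx\in S(f)$, because $f(\bx,\by)=0$; consequently $d(x,S(f))\le d(x,\bx)$ for every $x$.

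Now I would estimate the ratio on a small ball for \emph{all} admissible $y$, splitting into two cases according to the size of $f(x,y)$. Set $\delta':=\min\{\delta,\eta/\gamma\}$ and take any $(x,y)$ with $d(x,\bx)<\delta'$ and $f(x,y)>0$. If $0<f(x,y)<\eta$, the threshold estimate for $C$ applies directly and yields a ratio $>\gamma$. If instead $f(x,y)\ge\eta$, then using $d(x,S(f))\le d(x,\bx)<\delta'\le\eta/\gamma$ one gets $f(x,y)/d(x,S(f))\ge\eta/d(x,\bx)>\gamma$ (the value being $+\infty>\gamma$ when $d(x,S(f))=0$). In either case the ratio exceeds $\gamma$ throughout the ball $d(x,\bx)<\delta'$, so $A\ge\gamma$; letting $\gamma\uparrow C$ gives $A\ge C$ (the same argument with $\gamma$ arbitrarily large covers $C=+\infty$). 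Combined with $A\le B\le C$, this forces $A=B=C$, which is the claim.
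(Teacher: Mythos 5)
Your proposal is correct and matches the paper's proof in all essentials: the chain of easy inequalities comes from the same index-set inclusions via (P2), and the nontrivial inequality rests on the same key fact that $\bx\in S(f)$, hence $d(x,S(f))\le d(x,\bx)$ is small near $\bx$. The paper merely runs that step in contrapositive, sequence form --- any sequence with $x_k\to\bx$, $f(x_k,y_k)>0$ and ratio below $\ga$ automatically has $f(x_k,y_k)\downarrow0$ and is thus admissible for the right-hand $\liminf$ --- which is exactly your two-case neighbourhood estimate arranged in the opposite direction.
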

\begin{proof}
The inequalities
\begin{gather*}\label{pr-f-3}
\Er f(\bx,\by)\le
\liminf_{\substack{x\to\bx,\,y\to\by\\f(x,y)>0}} \frac{f(x,y)}{d(x,S(f))}\le
\liminf_{\substack{x\to\bx,\,f(x,y)\downarrow0}} \frac{f(x,y)}{d(x,S(f))}
\end{gather*}
follow from (P2) and the obvious implications:
$$
f(x,y)\downarrow0
\quad\Rightarrow\quad
y\to\by,\,f(x,y)>0
\quad\Rightarrow\quad
f(x,y)>0.
$$
If $\Er f(\bx,\by)=\infty$, then the claimed equalities hold trivially.
If $\Er f(\bx,\by)<\ga<\infty$, then there exists a sequence $(x_k,y_k)\in X\times Y$ with $f(x_k,y_k)>0$ such that $x_k\to\bx$ as $k\to\infty$ and $f(x_k,y_k)/d(x_k,S(f))<\ga$, $k=1,2,\ldots$.
Hence, $d(x_k,S(f))\to0$ and consequently $f(x_k,y_k)\downarrow0$ as $k\to\infty$.
It follows that
\begin{gather*}
\liminf_{\substack{x\to\bx,\,f(x,y)\downarrow0}} \frac{f(x,y)}{d(x,S(f))}\le \liminf_{k\to\infty}\frac{f(x_k,y_k)}{d(x_k,S(f))}\le\ga
\end{gather*}
and consequently
\begin{gather*}
\liminf_{\substack{x\to\bx,\,f(x,y)\downarrow0}} \frac{f(x,y)}{d(x,S(f))}\le \Er f(\bx,\by).
\end{gather*}
\end{proof}

\subsection{Nonlocal slopes}
The roles of variables $x$ and $y$ in definitions \eqref{eb} and \eqref{rr2} are different.
To better reflect this,
we are going to consider the following asymmetric maximum-type distance in $X\times Y$ depending on a positive parameter $\rho$:
\begin{equation}\label{drho}
d_\rho((x,y),(u,v)):=\max\{d(x,u),\rho d(y,v)\}.
\end{equation}
This is a pretty common trick, e.g., when studying regularity properties of set-valued mappings, cf. \cite{Aze06,AzeCor04,AzeBeh08,AzeCor09, Iof07,Iof10,Iof11}.
Alternatively, one can use the parametric sum-type metric (cf. \cite{Iof00_,Iof03}):
\begin{gather}\label{drho1}
d_\rho^1((x,y),(u,v)):=d(x,u)+\rho d(y,v).
\end{gather}

To formulate (nonlocal) primal space characterizations of the error bound property \eqref{eb}, we are going to use the following modifications of slopes \eqref{gli0} and \eqref{sl0}:
\begin{gather}\label{nls-f}
|\nabla{f}|_{\rho}^{\diamond}(x,y):=
\sup_{(u,v)\ne(x,y)}
\frac{[f(x,y)-f_+(u,v)]_+}{d_\rho((x,y),(u,v))},
\\\label{uss-f}
\overline{|\nabla{f}|}{}^{\diamond}(\bar{x},\by):=
\lim_{\rho\downarrow0}
\inf_{\substack{d(x,\bx)<\rho,\,0<f(x,y)<\rho}}\,
|\nabla{f}|{}^{\diamond}_{\rho}(x,y),
\end{gather}
which will be called, respectively,
the \emph{nonlocal $\rho$-slope} of $f$ at $(x,y)$
and
the \emph{uniform strict slope}.
It is assumed in \eqref{nls-f} that
$f(x,y)<\infty$.

Definition \eqref{nls-f} of the nonlocal $\rho$-slope is a realization of definition \eqref{gli0} for the case of a function on a product space with the product metric defined by \eqref{drho}.
In definition \eqref{uss-f}, we have not only $x\to\bx$ and $f(x,y)\downarrow0$, but also the metric on $X\times Y$ used in the definition of the nonlocal $\rho$-slope $|\nabla{f}|{}^{\diamond}_{\rho}(x,y)$ changing with the contribution of the $y$ component diminishing as $\rho\downarrow0$.

\subsection{Local slopes}
The local analogues of \eqref{nls-f} and \eqref{uss-f} are defined as follows:
\begin{gather}\label{ls-f}
|\nabla{f}|_{\rho}(x,y):=
\limsup_{\substack{u\to x,\,v\to y\\
(u,v)\ne(x,y)}}
\frac{[f(x,y)-f(u,v)]_+}{d_\rho((u,v),(x,y))},
\\\label{ss-f}
\overline{|\nabla{f}|}{}^{>}(\bar{x},\by):=
\lim_{\rho\downarrow0}
\inf_{\substack{d(x,\bx)<\rho,\,0<f(x,y)<\rho}}\,
|\nabla{f}|_{\rho}(x,y)
\end{gather}
and are called, respectively, the \emph{$\rho$-slope} of $f$ at $(x,y)$ ($f(x,y)<\infty$) and the \emph{strict outer slope} of $f$ at $(\bx,\by)$.

Definition \eqref{ls-f} of the $\rho$-slope is a realization of definition \eqref{li} for the case of a function on a product space with the product metric defined by \eqref{drho}, cf. \cite{AzeBeh08,Iof03,Iof07,Iof13}.

\begin{proposition}\label{pr2}
\begin{enumerate}
\item
$|\nabla{f}|_{\rho}(x,y)\le
|\nabla{f}|_{\rho}^{\diamond}(x,y)$\\
for all $\rho>0$ and all $(x,y)\in X\times Y$ with $0<f(x,y)<\infty$;
\item
$\overline{|\nabla{f}|}{}^{>}(\bar{x},\by)\le
\overline{|\nabla{f}|}{}^{\diamond}(\bx,\by)$;
\item
$\overline{|\nabla{f}|}{}^{\diamond}(\bar{x})\ge
\ds\liminf_{x\to\bar{x},\;f(x,y)\downarrow0}
\frac{f(x,y)}{d(x,\bx)}$.
\end{enumerate}
\end{proposition}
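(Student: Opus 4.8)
The plan is to prove the three parts of Proposition~\ref{pr2} directly from the definitions, closely mirroring the structure of the single-variable Proposition~\ref{nc}(i)--(iii), since definitions \eqref{nls-f}--\eqref{ss-f} are the two-variable analogues of \eqref{gli0}, \eqref{li}, \eqref{sl0} and \eqref{sl-o}. For part (i), fix $\rho>0$ and a point $(x,y)$ with $0<f(x,y)<\infty$. First I would compare the difference quotients defining $|\nabla{f}|_{\rho}(x,y)$ and $|\nabla{f}|_{\rho}^{\diamond}(x,y)$ at a common competitor $(u,v)$. The numerator in \eqref{ls-f} is $[f(x,y)-f(u,v)]_+$, while the numerator in \eqref{nls-f} is $[f(x,y)-f_+(u,v)]_+$; since $f_+(u,v)=\max\{f(u,v),0\}\ge f(u,v)$, we have $f(x,y)-f_+(u,v)\le f(x,y)-f(u,v)$, but the inequality I actually need goes the other way, so the correct observation is that the local slope is a $\limsup$ over competitors approaching $(x,y)$ whereas the nonlocal slope is a $\sup$ over \emph{all} competitors. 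Every competitor $(u,v)\ne(x,y)$ close to $(x,y)$ contributes a ratio bounded by $|\nabla{f}|_{\rho}^{\diamond}(x,y)$ once I replace $f(u,v)$ by $f_+(u,v)$ in the numerator, which only enlarges the ratio; taking the $\limsup$ on the left and the $\sup$ on the right yields the claimed inequality.

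For part (ii), the strategy is to pass the pointwise inequality from part (i) through the two-stage limit in definitions \eqref{ss-f} and \eqref{uss-f}. Both strict slopes are built identically: an inner infimum over $\{d(x,\bx)<\rho,\,0<f(x,y)<\rho\}$ followed by the outer limit $\rho\downarrow0$. Since part (i) gives $|\nabla{f}|_{\rho}(x,y)\le|\nabla{f}|_{\rho}^{\diamond}(x,y)$ at every admissible $(x,y)$ (for each fixed $\rho$), taking the infimum over the same index set preserves the inequality, and then so does the limit $\rho\downarrow0$. This part is essentially monotonicity of $\inf$ and $\liminf$ and should be routine.

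For part (iii), which is the analogue of Proposition~\ref{nc}(iii) via the elementary bound \eqref{gli0+}, I would first establish a two-variable version of \eqref{gli0+}: for $(x,y)$ with $x\ne\bx$, the specific competitor $(u,v)=(\bx,\by)$ is admissible in the supremum \eqref{nls-f}, and since $f(\bx,\by)=0$ gives $f_+(\bx,\by)=0$, the corresponding ratio is $f(x,y)/d_\rho((x,y),(\bx,\by))=f(x,y)/\max\{d(x,\bx),\rho d(y,\by)\}$. Hence $|\nabla{f}|_{\rho}^{\diamond}(x,y)\ge f(x,y)/\max\{d(x,\bx),\rho d(y,\by)\}$. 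The main obstacle is the presence of the $\rho d(y,\by)$ term in this denominator, which is absent from the target ratio $f(x,y)/d(x,\bx)$ on the right-hand side of (iii); I must argue that its contribution vanishes in the limit $\rho\downarrow0$. This is exactly where property (P2) does the work: along any sequence with $f(x,y)\downarrow0$ we have $d(y,\by)\to0$ at a rate controlled by $f(x,y)$, so for each fixed small $\rho$ the factor $\rho d(y,\by)$ can be made negligible compared to $d(x,\bx)$ (or, where $d(x,\bx)$ itself is small, the whole estimate is handled by the $f(x,y)/d(x,\bx)$ blow-up). Taking the inner infimum over $\{d(x,\bx)<\rho,\,0<f(x,y)<\rho\}$ and then letting $\rho\downarrow0$, the $\rho d(y,\by)$ term is dominated and the bound collapses to $\liminf_{x\to\bx,\,f(x,y)\downarrow0} f(x,y)/d(x,\bx)$, giving (iii). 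I would note that the right-hand side here is written as $\overline{|\nabla{f}|}{}^{\diamond}(\bar{x})$ in the statement, matching the notation of \eqref{uss-f} for the two-variable uniform strict slope.
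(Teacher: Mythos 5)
Your treatment of part (i) contains a genuine error, and it sits exactly where the content of (i) lies. You first observe, correctly, that $f_+(u,v)\ge f(u,v)$ gives $f(x,y)-f_+(u,v)\le f(x,y)-f(u,v)$, i.e.\ replacing $f(u,v)$ by $f_+(u,v)$ in the numerator \emph{shrinks} the difference quotient; but you then build the proof on the opposite assertion (``which only enlarges the ratio''). That assertion is false: if $f(u,v)<0<f(x,y)$, the local numerator is $f(x,y)-f(u,v)>f(x,y)$ while the nonlocal one is $[f(x,y)-f_+(u,v)]_+=f(x,y)$. Since the pointwise comparison goes the wrong way, the ``$\limsup$ of one versus $\sup$ of the other'' observation cannot close the argument by itself. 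The missing step --- and the only place where the hypothesis $f(x,y)>0$, which your argument never invokes, is actually needed --- is a case distinction. If $f\ge0$ on some punctured neighbourhood of $(x,y)$, then $f_+=f$ there, the two quotients in \eqref{ls-f} and \eqref{nls-f} coincide for all nearby competitors, and $\limsup\le\sup$ yields (i). Otherwise there is a sequence $(u_k,v_k)\to(x,y)$, $(u_k,v_k)\ne(x,y)$, with $f(u_k,v_k)<0$; then $[f(x,y)-f_+(u_k,v_k)]_+=f(x,y)>0$ while $d_\rho((x,y),(u_k,v_k))\to0$, so $|\nabla{f}|_{\rho}^{\diamond}(x,y)=+\infty$ and (i) holds trivially. (Without $f(x,y)>0$ the inequality can genuinely fail: if $f(x,y)=0$ and $f<0$ at points arbitrarily close to $(x,y)$, the local $\rho$-slope can be positive while the nonlocal one is $0$.)

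Parts (ii) and (iii) are essentially in order. Part (ii) follows from the corrected (i) by monotonicity of infima and limits, since every point in the common index set of \eqref{ss-f} and \eqref{uss-f} satisfies $0<f(x,y)<\rho$, so (i) applies there. Your part (iii) is, in substance, the paper's own proof: the paper likewise tests the supremum in \eqref{nls-f} with the competitor $(u,v)=(\bx,\by)$, writes the resulting quotient as $\min\bigl\{f(x,y)/d(x,\bx),\,\rho\iv f(x,y)/d(y,\by)\bigr\}$, and uses (P2) to push the second entry above the relevant level so that only the first entry matters; the paper merely organizes this around a fixed $\ga>\overline{|\nabla{f}|}{}^{\diamond}(\bx,\by)$ and a point where $|\nabla{f}|^{\diamond}_{\rho'}(x,y)<\ga$, whereas you run the estimate directly as a lower bound. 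To make your wording rigorous, replace the ``negligible compared to $d(x,\bx)$'' phrase (which need not be true pointwise) by the uniform bound furnished by (P2): there exist $c>0$ and $\be>0$ with $f(x,y)\ge c\,d(y,\by)$ whenever $0<f(x,y)<\be$, whence $|\nabla{f}|_{\rho}^{\diamond}(x,y)\ge\min\{f(x,y)/d(x,\bx),\,c/\rho\}$ for every admissible point once $\rho<\be$; taking the infimum over $d(x,\bx)<\rho$, $0<f(x,y)<\rho$ and letting $\rho\downarrow0$ gives (iii), since $c/\rho\to\infty$.
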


\begin{proof}
(i) and (ii) follow from comparing definitions \eqref{nls-f}, \eqref{uss-f}, \eqref{ls-f}, and \eqref{ss-f}.

(iii)
Let $\overline{|\nabla{f}|}{}^{\diamond}(\bar{x},\by) <\ga<\infty$ and $\rho>0$.
By (P2), one can find a $\rho'\in(0,\rho)$ such that \begin{align}\label{ndi}
\frac{f(x,y)}{d(y,\by)}>\rho'\ga
\end{align}
as long as $0<f(x,y)<\rho'$.
By \eqref{uss-f}, there exists a point $(x,y)\in X\times Y$ with $d(x,\bx)<\rho'$ and $0<f(x,y)<\rho'$ such that $|\nabla{f}|{}^{\diamond}_{\rho'}(x,y)<\ga$, i.e., by \eqref{nls-f},
$$
\frac{f(x,y)-f_+(u,v)}{d_{\rho'}((x,y),(u,v))}<\ga
$$
for all $(u,v)\ne(x,y)$.
Observe that $(x,y)\ne(\bx,\by)$ since $f(x,y)>0$.
Hence,
$$
\frac{f(x,y)}{d_{\rho'}((x,y),(\bx,\by))} =\min\left\{\frac{f(x,y)}{d(x,\bx)}, (\rho')\iv\frac{f(x,y)}{d(y,\by)}\right\}<\ga.
$$
Together with \eqref{ndi}, this implies
$$
\frac{f(x,y)}{d(x,\bx)}<\ga
$$
and consequently,
\begin{align*}
\inf_{\substack{d(x,\bx)<\rho,\,0<f(x,y)<\rho}}\,
\frac{f(x,y)}{d(x,\bx)}<\ga.
\end{align*}
Taking limits as $\rho\downarrow0$ and $\ga\downarrow \overline{|\nabla{f}|}{}^{\diamond}(\bar{x},\by)$, we arrive at the claimed inequality.
\end{proof}

\subsection{Subdifferential slopes}
If $X$ and $Y$ are normed linear spaces, one can define subdifferential counterparts of the local slopes \eqref{ls-f} and \eqref{ss-f}.
In the product space $X\times Y$, along with the usual $l_\infty$-type norm
\begin{gather*}
\|(u,v)\|=\max\{\|u\|,\|v\|\},\quad (u,v)\in X\times Y,
\end{gather*}
we are going to consider the $\rho$-norm $\|\cdot\|_\rho$ being the realization of the $\rho$-metric \eqref{drho}:
\begin{gather*}\label{nrho}
\|(u,v)\|_\rho=\max\{\|u\|,\rho\|v\|\},\quad (u,v)\in X\times Y.
\end{gather*}
The corresponding dual norm (we keep the same notation $\|\cdot\|_\rho$ for it) is of the form:
\begin{gather}\label{nrho*}
\|(u^*,v^*)\|_\rho=\|u^*\|+\rho\iv\|v^*\|,\quad (u^*,v^*)\in X^*\times Y^*.
\end{gather}

The subdifferential slopes are defined as follows:
\begin{gather}\label{sds-f}
|\sd{f}|_{\rho}(x,y):=
\inf_{\substack{(x^*,y^*)\in\sd f(x,y),\,
\|y^*\|<\rho}} \|x^*\|,
\\
\overline{|\sd{f}|}{}^{>}(\bar{x},\by):=
\lim_{\rho\downarrow0}
\inf_{\substack{d(x,\bx)<\rho,\,0<f(x,y)<\rho}}\,
|\sd{f}|_{\rho}(x,y),
\label{ssds-f}
\end{gather}
and called, respectively,
the \emph{subdifferential $\rho$-slope} of $f$ at $(x,y)$ ($f(x,y)<\infty$) and the \emph{strict outer subdifferential slope} of $f$ at
$(\bx,\by)$.

\begin{theorem}\label{th-f}
\begin{enumerate}
\item
$|\nabla{f}|_{\rho}(x,y)\le
|\sd{f}|_{\rho^2}(x,y)+\rho$ for all $\rho>0$ and all $(x,y)\in X\times Y$ with $f(x,y)<\infty$;
\item
$\overline{|\nabla{f}|}{}^{>}(\bar{x},\by)\le
\overline{|\sd{f}|}{}^{>}(\bx,\by)$;
\item
if $X$ and $Y$ are Asplund and $f_+$ is \lsc\ near $(\bar{x},\by)$, then
$\overline{|\nabla{f}|}{}^{>}(\bar{x},\by)=
\overline{|\sd{f}|}{}^{>}(\bx,\by)$.
\end{enumerate}
\end{theorem}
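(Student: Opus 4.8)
The plan is to take the three parts in order: establish the pointwise inequality (i) directly from the definitions, deduce (ii) from (i) by passing to infima and limits, and then obtain the equality (iii) by proving the reverse of (ii) with the fuzzy sum rule. First, for part (i) I would argue straight from the definition of the Fréchet subdifferential on the product space. Fixing $(x^*,y^*)\in\sd f(x,y)$ with $\|y^*\|<\rho^2$, the defining \lsc\ estimate yields, for $(u,v)$ near $(x,y)$ and any $\eps>0$,
\[
f(x,y)-f(u,v)\le\|x^*\|\,\|u-x\|+\|y^*\|\,\|v-y\|+\eps\max\{\|u-x\|,\|v-y\|\}.
\]
Dividing by $d_\rho((u,v),(x,y))=\max\{\|u-x\|,\rho\|v-y\|\}$ and using $\|u-x\|\le d_\rho$ together with $\|v-y\|\le\rho\iv d_\rho$ bounds the first term by $\|x^*\|$, the second by $\rho\iv\|y^*\|<\rho$, and the last by an error term that vanishes as $\eps\downarrow0$. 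Taking the $\limsup$ over $(u,v)\to(x,y)$ and then $\eps\downarrow0$ gives $|\nabla f|_\rho(x,y)\le\|x^*\|+\rho$, and the infimum over admissible $(x^*,y^*)$ delivers (i). The only real bookkeeping point is that the asymmetric metric forces the $y$-block to be weighted by $\rho\iv$, which is precisely why the constraint $\|y^*\|<\rho^2$ produces the additive $\rho$.

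For part (ii) I would combine (i) with two monotonicity observations: $|\sd f|_\rho(x,y)$ is nonincreasing in $\rho$, and the constraint set $\{d(x,\bx)<\rho,\,0<f<\rho\}$ shrinks as $\rho\downarrow0$. Writing $h(\rho):=\inf_{d(x,\bx)<\rho,\,0<f<\rho}|\sd f|_\rho$ for the quantity defining $\overline{|\sd f|}{}^{>}(\bx,\by)$, part (i) gives
\[
\inf_{d(x,\bx)<\rho,\,0<f<\rho}|\nabla f|_\rho\le\inf_{d(x,\bx)<\rho,\,0<f<\rho}|\sd f|_{\rho^2}+\rho\le h(\rho^2)+\rho,
\]
where the last step uses the inclusion of the smaller constraint set $\{d(x,\bx)<\rho^2,\,0<f<\rho^2\}$. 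Letting $\rho\downarrow0$ (so that $\rho^2\downarrow0$ as well) makes the left side tend to $\overline{|\nabla f|}{}^{>}(\bx,\by)$ and the right side to $\overline{|\sd f|}{}^{>}(\bx,\by)$, which is exactly the claimed inequality.

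Part (iii) is the hard direction, and it is where the genuine work lies: I would prove $\overline{|\sd f|}{}^{>}(\bx,\by)\le\overline{|\nabla f|}{}^{>}(\bx,\by)$. Assuming the right-hand side equals $\ell<\infty$ and fixing $\gamma>\ell$, for every $\rho$ there is a point $(x,y)$ with $d(x,\bx)<\rho$, $0<f(x,y)<\rho$ and $|\nabla f|_\rho(x,y)<\gamma$. The key observation is that this last inequality makes $(x,y)$ a local minimizer of $(u,v)\mapsto f(u,v)+\gamma\|(u,v)-(x,y)\|_\rho$ for the $\rho$-norm, so $0$ lies in the Fréchet subdifferential of this sum. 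Since $f_+$ is \lsc\ and $f(x,y)>0$, the function $f$ agrees with $f_+$ and is \lsc\ near $(x,y)$, so the fuzzy sum rule (Lemma~\ref{l02}) applies on the Asplund space $X\times Y$. It produces $(x_2,y_2)$ near $(x,y)$, a subgradient $(x_2^*,y_2^*)\in\sd f(x_2,y_2)$, and a subgradient of the norm term which, by Lemma~\ref{ll01} applied to $\|\cdot\|_\rho$, lies in $\gamma\B^*$ for the dual $\rho$-norm \eqref{nrho*}. Reading off the two blocks gives $\|x_2^*\|\le\gamma+\eps$ and, crucially, $\|y_2^*\|\le\rho(\gamma+\eps)$.

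The main obstacle, and the reason the parameters must be tuned with care, is the constraint $\|y^*\|<\sigma$ built into $|\sd f|_\sigma$ in \eqref{ssds-f}: the fuzzy sum rule only controls $\|y_2^*\|$ through the factor $\rho$, so to land inside the admissible set defining $h(\sigma)$ I must choose $\rho$ small relative to $\sigma$, namely $\rho<\sigma/(\gamma+\eps)$, while simultaneously keeping $\rho+\eps<\sigma$ so that the perturbed point still satisfies $d(x_2,\bx)<\sigma$ and $0<f(x_2,y_2)<\sigma$ (positivity forces the pointwise choice $\eps<f(x,y)$). With these choices $|\sd f|_\sigma(x_2,y_2)\le\|x_2^*\|<\gamma+\eps$, hence $h(\sigma)<\gamma+\eps$ for every $\sigma$; letting $\eps\downarrow0$ and then $\gamma\downarrow\ell$ yields $\overline{|\sd f|}{}^{>}(\bx,\by)\le\overline{|\nabla f|}{}^{>}(\bx,\by)$, which combined with (ii) gives the asserted equality. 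I expect the delicate coupling between $\rho$, $\sigma$ and $\eps$ in this last step to be the only subtle part of the argument.
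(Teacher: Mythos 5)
Your proposal is correct and takes essentially the same approach as the paper's proof: part (i) from the Fr\'echet subdifferential inequality measured in the $\rho$-norm with the dual norm \eqref{nrho*} giving the bound $\|x^*\|+\rho\iv\|y^*\|\le\|x^*\|+\rho$, part (ii) from the inclusion of the constraint sets, and part (iii) by viewing the point with small $\rho$-slope as a local minimizer of $f$ plus a multiple of the $\rho$-norm distance and applying the fuzzy sum rule (Lemma~\ref{l02}) with Lemma~\ref{ll01}. The only difference is cosmetic: the paper couples the parameters by fixing the target radius $\rho$ and working at $\rho'=\min\{1,\ga\iv\}\rho$, whereas you fix $\sigma$ and require $\rho<\sigma/(\gamma+\eps)$ and $\rho+\eps<\sigma$, which amounts to the same bookkeeping.
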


\begin{proof}
(i) Let $f(x,y)<\infty$, $\rho>0$, $(x^*,y^*)\in\sd f(x,y)$, and $\|y^*\|<\rho^2$.
By definition \eqref{Frsd} of the Fr\'echet subdifferential and taking into account that the Fr\'echet subdifferential is invariant to renorming of a space, we have
$$
\liminf_{\substack{(u,v)\to(x,y)\\
(u,v)\ne(x,y)}}
\frac{f(x,y)-f(u,v)-\langle(x^*,y^*),(u,v)-(x,y)\rangle} {\|(u,v)-(x,y)\|_\rho}\ge0.
$$
It follows that
$$
\limsup_{\substack{(u,v)\to(x,y)\\
(u,v)\ne(x,y)}}
\frac{f(u,v)-f(x,y)} {\|(u,v)-(x,y)\|_\rho}
\le\|(x^*,y^*)\|_\rho
\le\|x^*\|+\rho.
$$
Comparing the first expression with definition \eqref{ls-f} and taking into account that the last expression is positive, we conclude that $|\nabla{f}|_{\rho}(x,y)\le\|x^*\|+\rho$.
The assertion follows after taking infimum in the \RHS\ of the last inequality over all
$(x^*,y^*)\in\sd f(x,y)$ with $\|y^*\|<\rho^2$.

(ii) follows from (i) due to representations \eqref{ss-f}, \eqref{ssds-f}, and the simple observation:
$$
\{(x,y)
|\ d(x,\bx)<\rho^2,\,f(x,y)<\rho^2\}\subset \{(x,y)
|\ d(x,\bx)<\rho,\,f(x,y)<\rho\}
$$
when $\rho\in(0,1)$.

(iii)
Let $X$ and $Y$ be Asplund and $f_+$ be \lsc\ near $(\bar{x},\by)$ (in the product topology).
Thanks to (ii), we only need to prove that
$\overline{|\nabla{f}|}{}^>(\bar{x},\by)\ge
\overline{|\sd{f}|}{}^>(\bx,\by)$.
If $\overline{|\nabla{f}|}{}^>(\bar{x},\by)=\infty$, the assertion is trivial.
Let $\overline{|\nabla{f}|}{}^>(\bar{x},\by)<\gamma<\infty$.
Choose a $\ga'\in(\overline{|\nabla{f}|}{}^>(\bar{x},\by),\gamma)$ and an arbitrary $\rho>0$.
Set $\rho'=\min\{1,\ga\iv\}\rho$.
By definitions \eqref{ss-f} and \eqref{ls-f},
one can find a point $(x,y)\in X\times Y$ such that $d(x,\bx)<\rho'$, $0<f(x,y)<\rho'$, $f$ is \lsc\ near $(x,y)$, and
$$
f(x,y)-f(u,v)\le\ga'\|(u,v)-(x,y)\|_{\rho'}\quad \mbox{for all } (u,v) \mbox{ near } (x,y).
$$
In other words, $(x,y)$ is a point of local minimum of the function $$(u,v)\mapsto f(u,v)+\ga'\|(u,v)-(x,y)\|_{\rho'}.$$
Take an
$$\eps\in(0,\min\{\rho-d(x,\bx),\rho-f(x,y),\ga-\ga'\})$$
sufficiently small such that $f$ is \lsc\ on $B_\eps((x,y))$ and $B_\eps(x)\cap S(f)=\emptyset$.
Applying the \emph{fuzzy sum rule} (see, e.g., \cite[Theorem 2.33]{Mor06.1}), we find points $(z,w)\in X\times Y$ and $(x^*,y^*)\in\sd f(z,w)$ such that $d((z,w),(x,y))<\eps$, $f(z,w)<f(x,y)+\eps$, and $\|(x^*,y^*)\|_{\rho'}<\ga'+\eps$.
It follows that
$d(z,\bx)<\rho$, $0<f(z,w)<\rho$, $\|x^*\|<\ga$, and $\|y^*\|<{\rho'}\ga\le\rho$.
Hence, $|\sd{f}|_{\rho}(z,w)<\ga$ and consequently $\overline{|\sd{f}|}{}^>(\bar{x},\by)\le\ga$.
The claimed inequality follows after letting $\ga\to\overline{|\nabla{f}|}{}^>(\bar{x},\by)$.
\end{proof}

\begin{remark}
The subdifferential $\rho$-slope \eqref{sds-f} of $f$ at $(x,y)$ can be replaced in definition \eqref{ssds-f} by the following modification:
\begin{gather}\label{sds-f'}
|\sd{f}|_{\rho}'(x,y):=
\inf_{(x^*,y^*)\in\sd f(x,y)}
\|(x^*,y^*)\|_\rho,
\end{gather}
where norm $\|\cdot\|_\rho$ is given by \eqref{nrho*}.
In fact, one can notice that this constant was implicitly present in the proof of Theorem~\ref{th-f} where it was shown, in particular, that
$$|\nabla{f}|_{\rho}(x,y)\le |\sd{f}|_{\rho}'(x,y)\quad\mbox{for all}\quad \rho>0.$$
It is easy to check the relationships between constants \eqref{sds-f} and \eqref{sds-f'}:
\begin{enumerate}
\item
$|\sd{f}|_{\rho'}'(x,y)\le|\sd{f}|_{\rho}(x,y)+\rho/\rho'$ for all $\rho>0$ and $\rho'>0$;
\item
if $|\sd{f}|_{\rho}'(x,y)<\ga<\infty$, then $|\sd{f}|_{\ga\rho}(x,y)<\ga$.
\end{enumerate}
An advantage of constant \eqref{sds-f} is that it does not depend on the choice of an equivalent norm in the product space.
\end{remark}

In the special case of function $\tilde f$ defined by \eqref{fti}, slopes \eqref{nls-f}, \eqref{uss-f}, \eqref{ls-f}, \eqref{ss-f}, \eqref{sds-f}, and \eqref{ssds-f} reduce, respectively, to \eqref{gli0}, \eqref{sl0}, \eqref{li}, \eqref{sl-o}, \eqref{eg}, and \eqref{if}.

\section{Error Bounds Criteria for Functions of Two Variables}\label{EB2}

In this section, we establish primal and dual space characterizations of the error bound property \eqref{eb} for a function $f:X\times Y\to\R_\infty$ defined on the product of metric spaces $X$ and $Y$ and satisfying conditions (P1) and (P2).
The criteria formulated below generalize the corresponding ones in Section~\ref{S1}.

\subsection{Nonlocal slope characterization}
The main result is given by the next theorem being an extension of Theorem~\ref{ a}.

\begin{theorem}\label{3T1}
\begin{enumerate}
\item
$\Er f(\bx,\by)\le
\overline{|\nabla{f}|}{}^{\diamond}(\bar{x},\by)$;
\item
if $X$ and $Y$ are complete and $f_+$ is lower semicontinuous (in the product topology)
near $(\bar{x},\by)$,
then
$\Er f(\bx,\by)=
\overline{|\nabla{f}|}{}^{\diamond}(\bar{x},\by)$.
\end{enumerate}
\end{theorem}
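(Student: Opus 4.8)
I would prove Theorem~\ref{3T1} as a two-variable analogue of Theorem~\ref{ a}, mirroring the structure of the proof of \cite[Theorem~1]{FabHenKruOut12}. Part (i) is the ``easy'' inequality and should follow almost directly from the definitions together with Proposition~\ref{pr-f}. Part (ii) is the substantive direction and is where the Ekeland variational principle (Lemma~\ref{l01}) enters, now applied on the product space $X\times Y$ equipped with the asymmetric $\rho$-metric $d_\rho$ from \eqref{drho}.

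\textbf{Part (i).} By Proposition~\ref{pr-f}, I may compute $\Er f(\bx,\by)$ as $\liminf_{x\to\bx,\,f(x,y)\downarrow0} f(x,y)/d(x,S(f))$. Fix such an $(x,y)$ with $0<f(x,y)<\infty$. For any point $s\in S(f)$ we have $f(s,\by)\le0$, hence $f_+(s,\by)=0$, so the quotient defining the nonlocal $\rho$-slope \eqref{nls-f} at the competitor $(u,v)=(s,\by)$ equals $f(x,y)/d_\rho((x,y),(s,\by))$. Since $d_\rho((x,y),(s,\by))=\max\{d(x,s),\rho\, d(y,\by)\}$, taking the supremum over $s\in S(f)$ and using that the $x$-component $d(x,s)$ can be made as small as $d(x,S(f))$ gives $|\nabla{f}|_\rho^\diamond(x,y)\ge f(x,y)/\max\{d(x,S(f)),\rho\,d(y,\by)\}$. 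As $\rho\downarrow0$ the term $\rho\,d(y,\by)$ is dominated (here (P2) guarantees $d(y,\by)$ stays controlled relative to $f(x,y)$ along the relevant sequences), so passing to the infimum over the neighbourhood and then to the limit in $\rho$ yields $\overline{|\nabla{f}|}{}^\diamond(\bx,\by)\ge\Er f(\bx,\by)$.

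\textbf{Part (ii).} Here I assume completeness of $X,Y$ and lower semicontinuity of $f_+$ near $(\bx,\by)$, and I prove $\Er f(\bx,\by)\ge\overline{|\nabla{f}|}{}^\diamond(\bx,\by)$. Suppose $\Er f(\bx,\by)<\ga$; I must produce a point near $(\bx,\by)$ at which the nonlocal $\rho$-slope is $<\ga$, for arbitrarily small $\rho$. By definition of $\Er$ there is $(x,y)$ with $x$ near $\bx$, $0<f(x,y)$, and $f(x,y)<\ga\,d(x,S(f))$. The idea is to apply Lemma~\ref{l01} to the \lsc\ function $f_+$ on the complete metric space $(X\times Y, d_\rho)$ with the Ekeland parameter $\eps=f_+(x,y)$ and a suitably chosen $\lambda$: this produces a minimizer $(\hat x,\hat y)$ of $(u,v)\mapsto f_+(u,v)+(\eps/\lambda)\,d_\rho((u,v),(\hat x,\hat y))$, which translates precisely into the bound $|\nabla{f}|_\rho^\diamond(\hat x,\hat y)\le\eps/\lambda<\ga$. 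The choices must be calibrated so that (c) $d(\hat x,\bx)<\rho$ and $0<f(\hat x,\hat y)<\rho$ hold, so that the point qualifies in the infimum defining \eqref{uss-f}; condition (P1) ensures $\hat y$ is forced close to $\by$ once $f(\hat x,\hat y)$ is small, and (P2) is what lets me convert smallness of $f$ into the required smallness in the $y$-direction uniformly as $\rho\downarrow0$.

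\textbf{Main obstacle.} The delicate point is the bookkeeping of the parameter $\rho$ against the Ekeland radius $\lambda$ in the asymmetric metric $d_\rho$: the $y$-component is scaled by $\rho$, so the Ekeland ball in $d_\rho$ is ``short'' in $x$ but ``tall'' in $y$, and I must ensure the minimizer does not drift in $y$ outside the region where $f_+$ is \lsc\ while still landing in the $x$-neighbourhood of radius $\rho$. The key is to exploit property (P2) to guarantee that $f(\hat x,\hat y)\downarrow0$ genuinely forces $\hat y\to\by$ at a controlled rate, so that $\rho\,d(\hat y,\by)$ does not spoil the estimate; this is exactly the role the hypotheses (P1)--(P2) play in making a definition that ``looks local only in $x$'' behave locally in both variables, and it is the one substantive departure from the single-variable argument of \cite{FabHenKruOut12}.
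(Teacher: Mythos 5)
Your overall strategy is the same as the paper's: part (i) by testing the nonlocal $\rho$-slope against competitors of the form $(s,\by)$ with $s\in S(f)$ and using (P2) to neutralize the $\rho\, d(y,\by)$ term, and part (ii) by applying the Ekeland variational principle (Lemma~\ref{l01}) to $f_+$ in the asymmetric metric $d_\rho$ of \eqref{drho}, with $\eps=f_+(x,y)$ and $\la$ calibrated so that $\eps/\la<\ga$. Your part (i) is correct and essentially identical to the paper's argument.

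In part (ii), however, there is a genuine gap, and your ``main obstacle'' paragraph misdiagnoses it. You propose to apply Lemma~\ref{l01} on the whole space $(X\times Y,d_\rho)$, but the theorem assumes $f_+$ to be \lsc\ only \emph{near} $(\bx,\by)$; lower semicontinuity on the whole space is a hypothesis of Ekeland's principle, not a property one can verify ex post for its output, so no amount of control over where the minimizer ``drifts'' repairs a global application. The correct localization --- what the paper does --- is to apply Ekeland on the complete metric space $(B_\de(\bx,\by),d_\rho)$, with $\de$ chosen so that $f_+$ is \lsc\ on this ball. But this creates a second problem that your proposal never addresses: Ekeland's conclusion then bounds $f(\hat x,\hat y)-f_+(u,v)$ only for competitors $(u,v)\in B_\de(\bx,\by)$, whereas the nonlocal $\rho$-slope \eqref{nls-f} is a supremum over \emph{all} $(u,v)\in X\times Y$; bounding the restricted supremum does not bound the full one, so your claim that minimality ``translates precisely'' into $|\nabla f|^\diamond_\rho(\hat x,\hat y)\le\eps/\la$ fails in the localized setting. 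The missing step is an extension argument: one must start from a point $(z,w)$ with $d(z,\bx)$ and $f(z,w)$ so small (the paper's parameter $\eta$) that the Ekeland point satisfies both $f(\hat x,\hat y)<\ga\rho\de/2$ and, via (P2), $d((\hat x,\hat y),(\bx,\by))<\de/2$; then every $(u,v)\notin B_\de(\bx,\by)$ has $d_\rho((u,v),(\hat x,\hat y))\ge\rho\, d((u,v),(\hat x,\hat y))>\rho\de/2$ (for $\rho<1$), whence $f(\hat x,\hat y)-f_+(u,v)\le f(\hat x,\hat y)<\ga\, d_\rho((u,v),(\hat x,\hat y))$ holds trivially for such competitors. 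That is the actual role of (P2) in part (ii) --- rendering the outside-the-ball competitors harmless --- not keeping the minimizer inside the region of lower semicontinuity. As written, your argument proves the equality in (ii) only under the stronger hypothesis that $f_+$ is \lsc\ on all of $X\times Y$. (A secondary slip: you credit (P1) with forcing $\hat y$ close to $\by$ once $f(\hat x,\hat y)$ is small; that implication comes from (P2), not (P1).)
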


\begin{proof}
(i) If $\Er f(\bx,\by)=0$, the
assertion is trivial.
Let $0<\gamma<\Er f(\bx,\by)$.
We are going to show that $\overline{|\nabla{f}|}{}^{\diamond}(\bar{x},\by)\ge\gamma$.
By \eqref{rr2}, there is a $\delta>0$ such that
\begin{equation}\label{3ar}
\frac{f(x,y)}{d(x,S(f))}>\gamma
\end{equation}
for any $x\in{B}_\delta(\bar{x})$ and $y\in Y$ with $f(x,y)>0$.
At the same time, by (P2),
taking a smaller $\de$ if necessary, we can ensure that
\begin{equation}\label{3ar2}
\frac{f(x,y)}{d(y,\by)}>\de\gamma
\end{equation}
for all $(x,y)\in X\times Y$ such that $0<f(x,y)<\de$.
If $\rho\in(0,\de)$, $d(x,\bx)<\rho$, and $0<f(x,y)<\rho$, then,
by \eqref{3ar}, one can find a $u\in S(f)$ such that
$$
\frac{f(x,y)}{d(x,u)}>\gamma.
$$
Taking into account \eqref{3ar2}, we have
\begin{gather*}
\begin{split}
|\nabla{f}|_{\rho}^{\diamond}(x,y)\ge \frac{f(x,y)-f_+(u,\by)}{d_\rho((u,\by),(x,y))}&= \frac{f(x,y)}{d_\rho((u,\by),(x,y))}\\
&=\min\left\{\frac{f(x,y)}{d(x,u)},\frac{f(x,y)}{\rho d(y,\by)}\right\}> \gamma
\end{split}
\end{gather*}
and consequently
$\overline{|\nabla{f}|}{}^{\diamond}(\bar{x},\by)\ge\gamma$.
The claimed inequality follows after letting $\ga\to\Er f(\bx,\by)$.

(ii) Let $X$ and $Y$ be complete and $f_+$ be lower semicontinuous near $(\bar{x},\by)$ (in the product topology).
Thanks to (i), we only need to show that
$$\Er f(\bx,\by)\ge
\overline{|\nabla{f}|}{}^{\diamond}(\bar{x},\by).$$
If $\Er f(\bx,\by)=\infty$, the inequality is trivial.
Let $\Er f(\bx,\by)<\gamma<\infty$.
Choose a $\ga'\in(\Er f(\bx,\by),\gamma)$, a $\delta>0$ such that $f_+$ is lower semicontinuous on $B_\de(\bx,\by)$, a $\be>0$ such that
\begin{gather}\label{3ose}
d(y,\by)<\de/2\quad\mbox{if}\quad 0<f(x,y)<\be,
\end{gather}
a $\rho\in(0,1)$, and set
\begin{gather}\label{3ch-1}
\eta:=\min\{\rho/2,\rho\ga\iv,\de/4,\be\ga\iv\}.
\end{gather}

By \eqref{rr2}, there is a $z\in B_{\eta}(\bx)$ and a $w\in Y$ such that
\begin{gather}\label{3ch0}
0<f(z,w)<\ga' d(z,S(f)).
\end{gather}
Denote $\eps:=f(z,w)$ and $\mu:=d(z,S(f))$.
Then, $\mu\le d(z,\bx)\le\eta$.
Now we consider a complete metric space $(B_\de(\bx,\by),d_\rho)$, where metric $d_\rho$ is defined by \eqref{drho}.
Applying to $f_+$ the Ekeland variational principle (Lemma~\ref{l01}) with $\eps>0$ defined above and
\begin{gather}\label{3ch1-}
\la:=\frac{\ga'}{\ga}\mu,
\end{gather}
we find a point $(x,y)\in B_\de(\bx,\by)$ such that
\begin{gather}\label{3ch1}
d_\rho((x,y),(z,w))\le\la,\quad
f_+(x,y)\le f(z,w),
\end{gather}
and
\begin{gather}\label{3ch2}
f_+(u,v) +(\eps/\la)d_\rho((u,v),(x,y))\ge f_+(x,y),\quad\forall(u,v)\in B_\de(\bx,\by).
\end{gather}

Thanks to \eqref{3ch1}, \eqref{3ch1-}, \eqref{3ch-1}, and \eqref{3ch0}, we have
\begin{gather}
d(x,z)\le\la<\mu\le d(z,\bx),
\notag\\
d(x,S(f))\ge d(z,S(f))-d(x,z)\ge\mu-\la >0,
\label{3ch3}\\
d(x,\bx)\le d(x,z)+d(z,\bx)< 2d(z,\bx)\le2\eta
\le\min\{\rho,\de/2\},
\label{3ch4}\\
f_+(x,y)\le f(z,w)<\ga \mu\le\ga d(z,\bx)\le\ga\eta
\le\min\{\rho,\be\}.
\label{3ch5}
\end{gather}
It follows from \eqref{3ch3} that $f(x,y)>0$, while \eqref{3ch4} and \eqref{3ch5} together with \eqref{3ose} guarantee that
$d(x,\bx)<\rho$,  $f(x,y)<\rho$, and
\begin{gather}\label{3low}
d((x,y),(\bx,\by))<
\de/2.
\end{gather}

Thanks to \eqref{3ch2}, \eqref{3ch1-}, and \eqref{3ch0}, we have
\begin{gather*}
f(x,y)-f_+(u,v)\le \ga d_\rho((u,v),(x,y)), \quad\forall(u,v)\in B_\de(\bx,\by).
\end{gather*}
If $(u,v)\notin B_\de(\bx,\by)$, then, by \eqref{3low},
$$d((u,v),(x,y))>\de-d((x,y),(\bx,\by))>\de/2$$
and consequently, by \eqref{3ch1}, \eqref{3ch0}, \eqref{3ch-1},
\begin{gather*}
\begin{split}
f(x,y)-f_+(u,v)&\le f(x,y)\le f(z,w)<\ga d(z,\bx)\le\ga\eta\le\ga\rho\de/2\\
&<\ga\rho d((u,v),(x,y))\le\ga d_\rho((u,v),(x,y)).
\end{split}
\end{gather*}
Hence,
\begin{gather*}
f(x,y)-f_+(u,v)\le\ga d_\rho((u,v),(x,y)), \quad\forall(u,v)\in X\times Y,
\end{gather*}
or equivalently,
\begin{gather*}
|\nabla{f}|_{\rho}^{\diamond}(x,y) =\sup_{(u,v)\ne(x,y)}
\frac{f(x,y)-f_+(u,v)}{d_\rho((u,v),(x,y))}\le \ga
\end{gather*}
and consequently,
\begin{gather*}
\inf_{\substack{d(x,\bx)<\rho,\,f(x,y)<\rho}}\,
|\nabla{f}|_{\rho}^{\diamond}(x,y)\le \ga.
\end{gather*}
Taking limits in the last inequality as $\rho\downarrow0$ and $\ga\to\Er f(\bx,\by)$ completes the proof.
\end{proof}

It follows from Theorem~\ref{3T1}  that inequality
$\overline{|\nabla{f}|}{}^{\diamond}(\bar{x},\by)>0$ is crucial for determining the error bound property of $f$ at $(\bx,\by)$.

The nonlocal $\rho$-slope \eqref{nls-f} depends on the choice of $\rho$-metric on the product space.
If instead of the maximum type metric $d_\rho$, defined by \eqref{drho}, one employs in \eqref{nls-f} the sum type metric $d_\rho^1$, defined by \eqref{drho1},
it will produce a different number.
We say that a $\rho$-metric $d'_\rho$ on $X\times Y$ is admissible if $d_\rho\le d'_\rho\le d^1_\rho$.
Fortunately, Theorem~\ref{3T1} is invariant on the choice of an admissible metric.

\begin{proposition}\label{3P1}
Theorem~\ref{3T1} remains valid if, in definition \eqref{nls-f}, metric \eqref{drho} is replaced by some other admissible $\rho$-metric.
\end{proposition}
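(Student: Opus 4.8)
The plan is to show that the only place in the proof of Theorem~\ref{3T1} where the specific form of the $\rho$-metric \eqref{drho} is used is through two structural facts: the elementary inequality $d_\rho\le d'_\rho\le d_\rho^1$ defining admissibility, and the behaviour of the relevant difference quotients with respect to a translation of the $y$-variable to $\by$. I would therefore re-examine the two parts of the proof separately, keeping track of which direction of the admissibility sandwich is needed in each.

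For part~(i), recall that the lower bound on $\overline{|\nabla{f}|}{}^{\diamond}(\bar{x},\by)$ was obtained by evaluating the nonlocal slope against the single test point $(u,\by)$ with $u\in S(f)$, giving
\begin{gather*}
|\nabla{f}|_{\rho}^{\diamond}(x,y)\ge
\frac{f(x,y)}{d_\rho((u,\by),(x,y))}
=\min\left\{\frac{f(x,y)}{d(x,u)},\frac{f(x,y)}{\rho d(y,\by)}\right\}>\ga.
\end{gather*}
Since an admissible metric $d'_\rho$ satisfies $d'_\rho\le d_\rho^1$, and at the test point $d_\rho^1((u,\by),(x,y))=d(x,u)+\rho d(y,\by)$, the estimate does not survive verbatim. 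First I would replace the maximum-type splitting by the cruder bound using $d'_\rho\le d_\rho^1\le 2\max\{d(x,u),\rho d(y,\by)\}=2d_\rho$ on that single pair, or simply note that it suffices to bound $f(x,y)/d'_\rho((u,\by),(x,y))$ from below. Using \eqref{3ar} and \eqref{3ar2} and shrinking $\de$ by a harmless constant factor, I can still force both $f(x,y)/d(x,u)>\ga$ and $f(x,y)/(\rho d(y,\by))>\ga$ and hence $f(x,y)/d_\rho^1((u,\by),(x,y))>\ga/2$; rescaling $\ga$ across the $\rho\downarrow0$ limit absorbs the constant, so the inequality $\Er f(\bx,\by)\le\overline{|\nabla{f}|}{}^{\diamond}(\bar{x},\by)$ (now computed with $d'_\rho$) persists.

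For part~(ii), the key observation is that $d_\rho\le d'_\rho$ makes the Ekeland argument only \emph{easier}: applying Lemma~\ref{l01} in the complete metric space $(B_\de(\bx,\by),d'_\rho)$ produces a point $(x,y)$ satisfying the descent inequality $f_+(u,v)+(\eps/\la)d'_\rho((u,v),(x,y))\ge f_+(x,y)$, and since $d'_\rho\ge d_\rho$, all the distance estimates \eqref{3ch3}--\eqref{3low} that controlled $d(x,z)$, $d(x,\bx)$ and $f_+(x,y)$ go through unchanged (they only used $d_\rho\le d'_\rho$ to bound the genuine distances from below). The final slope estimate then reads $f(x,y)-f_+(u,v)\le\ga d'_\rho((u,v),(x,y))$, which is exactly $|\nabla{f}|_{\rho}^{\diamond}(x,y)\le\ga$ with the nonlocal slope computed against $d'_\rho$; the ``far'' case $(u,v)\notin B_\de(\bx,\by)$ is handled by the same argument using $d'_\rho\ge d_\rho\ge\rho\, d(\cdot,\cdot)$ in the $y$-direction. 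I would thus conclude both inequalities and hence the equality.

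The main obstacle is bookkeeping in part~(i): the clean product structure of $d_\rho$ that let the author split the quotient into a minimum of two terms is lost for a general admissible metric, so I expect to spend most of the effort verifying that the two-sided comparison $d_\rho\le d'_\rho\le d_\rho^1$ yields the lower bound up to a universal constant that can be rescaled away in the limit $\rho\downarrow0$. The direction $d_\rho\le d'_\rho$ does all the work in part~(ii) for free, while the direction $d'_\rho\le d_\rho^1$ is precisely what is needed to keep part~(i) alive; I would state this dependence explicitly so the reader sees why both halves of the admissibility definition are required.
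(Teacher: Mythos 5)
Your part~(ii) is correct, although it is more work than needed: since a larger metric gives a smaller value of \eqref{nls-f} and hence of \eqref{uss-f}, the inequality $d_\rho\le d'_\rho$ immediately yields $\overline{|\nabla{f}|}{}^{\diamond}_{d'}(\bar{x},\by)\le \overline{|\nabla{f}|}{}^{\diamond}(\bar{x},\by)$ (the subscript $d'$ indicating that \eqref{uss-f} is computed with $d'_\rho$), so the bound ``slope $\le\Er f(\bx,\by)$'' under completeness and lower semicontinuity is simply inherited from Theorem~\ref{3T1}(ii); no re-run of Ekeland is required, and this monotonicity observation is exactly the paper's argument. The genuine gap is in your part~(i). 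Forcing both $f(x,y)/d(x,u)>\ga$ and $f(x,y)/(\rho\, d(y,\by))>\ga$ gives $f(x,y)/d^1_\rho((u,\by),(x,y))>\ga/2$ \emph{uniformly in} $\rho\in(0,\de)$, and a bound that equals $\ga/2$ for every $\rho$ is still $\ga/2$ after the limit $\rho\downarrow0$: the limit is taken in $\rho$, not in the size of the bound. Nor can you ``rescale $\ga$'': $\ga$ must stay below $\Er f(\bx,\by)$ for \eqref{3ar} to be available, so taking the supremum over admissible $\ga$ yields only $\Er f(\bx,\by)/2\le\overline{|\nabla{f}|}{}^{\diamond}_{d'}(\bar{x},\by)$, which establishes neither Theorem~\ref{3T1}(i) for $d'_\rho$ nor, consequently, the equality in part~(ii).

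The repair is to keep the full strength of \eqref{3ar2} rather than its weakened consequence $f(x,y)/(\rho\, d(y,\by))>\ga$. Indeed, \eqref{3ar2} gives $d(y,\by)<f(x,y)/(\de\ga)$, hence
\begin{gather*}
d^1_\rho((u,\by),(x,y))=d(x,u)+\rho\, d(y,\by)<\ga\iv f(x,y)\,(1+\rho/\de),
\end{gather*}
so the nonlocal slope computed with any admissible metric at such $(x,y)$ exceeds $\ga/(1+\rho/\de)$. This lower bound now \emph{does} depend on $\rho$ and tends to $\ga$ as $\rho\downarrow0$, which is what actually absorbs the defect: the $y$-contribution to the sum metric carries the factor $\rho$ and is therefore of order $\rho f(x,y)$ by (P2), negligible relative to $f(x,y)$ in the limit, whereas your two separate ``$>\ga$'' bounds discard this information. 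The paper achieves the same end differently: it fixes $\ga'\in(\ga,\Er f(\bx,\by))$, controls the $x$-term by $(\ga')\iv$ via \eqref{3arr} and absorbs the $y$-term into the slack $\ga\iv-(\ga')\iv$ via \eqref{3arr2}, obtaining a bound $>\ga$ for every $\rho\in(0,\de)$. Either fix completes your argument; as written, however, the factor $2$ does not disappear and the proposition is not proved.
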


\begin{proof}
Denote by $\overline{|\nabla{f}|}{}^{\diamond}_{1}(\bar{x},\by)$ the constant produced by \eqref{uss-f} if metric \eqref{drho} is replaced in \eqref{nls-f} by metric \eqref{drho1}.
Since a larger metric leads to a smaller value of \eqref{nls-f} and consequently of \eqref{uss-f}, it holds $\overline{|\nabla{f}|}{}^{\diamond}_{1}(\bar{x},\by)\le \overline{|\nabla{f}|}{}^{\diamond}(\bar{x},\by)$ with the constants corresponding to any other admissible $\rho$-metric lying in between.
We only need to prove that $\Er f(\bx,\by)\le
\overline{|\nabla{f}|}{}^{\diamond}_{1}(\bar{x},\by)$.

If $\Er f(\bx,\by)=0$ or
$\overline{|\nabla{f}|}{}^{\diamond}_{1}(\bar{x},\by)=\infty$, the inequality is trivial.
Let $0<\gamma<\Er f(\bx,\by)$ and
$\overline{|\nabla{f}|}{}^{\diamond}_{1}(\bar{x},\by)<\infty$.
We are going to show that $\overline{|\nabla{f}|}{}^{\diamond}_{1}(\bar{x},\by)\ge\gamma$.
Choose a $\ga'\in(\ga,\Er f(\bx,\by))$.
By \eqref{rr2}, there is a $\delta>0$ such that
\begin{equation}\label{3arr}
\frac{d(x,S(f))}{f(x,y)}<(\gamma')\iv
\end{equation}
for any $x\in{B}_\delta(\bar{x})$ and $y\in Y$ with $f(x,y)>0$.
Thanks to (P2), taking a smaller $\de$ if necessary, we can ensure that
\begin{equation}\label{3arr2}
\frac{d(y,\by)}{f(x,y)}<\frac{\gamma\iv-(\ga')\iv}{\de}
\end{equation}
for all $(x,y)\in X\times Y$ such that $0<f(x,y)<\de$.

Choose any $\rho\in(0,\de)$ and any
$(x,y)\in X\times Y$ such that $d(x,\bx)<\rho$ and $0<f(x,y)<\rho$
(Such points exist since
$\overline{|\nabla{f}|}{}^{\diamond}_{1}(\bar{x},\by)<\infty$.) By \eqref{3arr}, one can find a
$u\in S(f)$ such that
$$
\frac{d(x,u)}{f(x,y)}<(\gamma')\iv.
$$
Taking into account \eqref{3arr2}, we also have
\begin{multline*}
\frac{d_\rho^1((u,\by),(x,y))}{f(x,y)-f_+(u,\by)}= \frac{d_\rho^1((u,\by),(x,y))}{f(x,y)}= \frac{d(u,x)+\rho d(y,\by)}{f(x,y)}\\
<(\gamma')\iv+\frac{\rho}{\de}(\gamma\iv-(\ga')\iv)
<\gamma\iv.
\end{multline*}
Hence,
\begin{gather*}
\sup_{(u,v)\ne(x,y)}
\frac{f(x,y)-f_+(u,v)}{d_\rho^1((u,v),(x,y))}\ge
\frac{f(x,y)-f_+(u,\by)}{d_\rho^1((u,\by),(x,y))}>\ga,
\\
\inf_{\substack{d(x,\bx)<\rho,\,0<f(x,y)<\rho}}\,
\sup_{(u,v)\ne(x,y)}
\frac{f(x,y)-f_+(u,v)}{d_\rho^1((u,v),(x,y))}\ge\ga.
\end{gather*}
The claimed inequality follows after taking limits as $\rho\downarrow0$ and $\ga\to\Er f(\bx,\by)$.
\end{proof}

\begin{remark}\label{3R1}
It follows from Theorem~\ref{3T1} and Proposition~\ref{3P1} that, when $X$ and $Y$ are complete and $f_+$ is \lsc\ near $(\bar{x},\by)$, the uniform strict slope \eqref{uss-f} of $f$ at $(\bx,\by)$ is invariant on the choice of an admissible $\rho$-metric on $X\times Y$.
\end{remark}

\subsection{Error bound criteria}
Using slopes \eqref{uss-f}, \eqref{ss-f}, and \eqref{ssds-f}, one can formulate several quantitative criteria of error bounds.
The next corollary is a consequence of Theorems~\ref{3T1} and \ref{th-f} and Proposition~\ref{pr2}.

\begin{corollary}\label{3C1.1}
Let $\ga>0$.
Consider the following conditions:
\renewcommand {\theenumi} {\alph{enumi}}
\begin{enumerate}
\item
$f$ has an error bound at $(\bx,\by)$ with some $\tau>0$;
\item
$\overline{|\nabla{f}|}{}^{\diamond}(\bar{x},\by)>\ga$,\\ i.e., for some $\rho>0$ and any $(x,y)\in X\times Y$ with $d(x,\bx)<\rho$, and $0<f(x,y)<\rho$, it holds $|\nabla{f}|_\rho^{\diamond}(x,y)>\ga$, and consequently there is a $(u,v)\in X\times Y$ such that
\begin{gather*}\label{3ree}
f(x,y)-f_+(u,v)>\ga d_\rho((u,v),(x,y));
\end{gather*}
\item
$\ds\liminf_{x\to\bar{x},\;f(x,y)\downarrow0} \frac{f(x,y)}{d(x,\bx)}>\ga$;
\item
$\overline{|\nabla{f}|}{}^{>}(\bar{x},\by)>\ga$,\\ i.e., for some $\rho>0$ and any $(x,y)\in X\times Y$ with $d(x,\bx)<\rho$ and $0<f(x,y)<\rho$, it holds $|\nabla{f}|_\rho(x,y)>\ga$ and consequently, for any $\eps>0$, there is a $(u,v)\in B_\eps(x,y)$ such that
\begin{gather}\label{3ree2}
f(x,y)-f(u,v)>\ga d_\rho((u,v),(x,y));
\end{gather}
\item
$\ds\liminf_{x\to\bar{x},\;f(x,y)\downarrow0}
\max\left\{|\nabla{f}|(x,y), \frac{f(x,y)}{d(x,\bx)}\right\}>\ga$,\\ i.e., for some $\rho>0$ and any $(x,y)\in X\times Y$ with $d(x,\bx)<\rho$, $0<f(x,y)<\rho$, and $f(x,y)/d(x,\bx)\le\ga$, it holds $|\nabla{f}|_\rho(x,y)>\ga$ and consequently, for any $\eps>0$, there is a $(u,v)\in B_\eps(x,y)$ such that \eqref{3ree2} holds true;
\item
$X$ and $Y$ are normed spaces and $\overline{|\sd{f}|}{}^{>}(\bar{x},\by)>\ga$,\\ i.e.,
for some $\rho>0$ and any $(x,y)\in X\times Y$ with $\|x-\bx\|<\rho$ and $0<f(x,y)<\rho$, it holds $|\sd{f}|_\rho(x,y)>\ga$ and consequently $\|x^*\|>\ga$ for all $(x^*,y^*)\in\sd f(x,y)$ with $\|y^*\|<\rho$.
\item
$X$ and $Y$ are normed spaces and\\ $\ds\liminf_{x\to\bar{x},\;f(x,y)\downarrow0}
\max\left\{|\sd{f}|(x,y), \frac{f(x,y)}{\|x-\bx\|}\right\}>\ga$,\\ i.e.,
for some $\rho>0$ and any $(x,y)\in X\times Y$ with $\|x-\bx\|<\rho$, $0<f(x,y)<\rho$, and $f(x,y)/\|x-\bx\|\le\ga$, it holds $|\sd{f}|_\rho(x,y)>\ga$ and consequently $\|x^*\|>\ga$ for all $(x^*,y^*)\in\sd f(x,y)$ with $\|y^*\|<\rho$.
\end{enumerate}
\renewcommand {\theenumi} {\roman{enumi}}
The following implications hold true:
\begin{enumerate}
\item
{\rm (c) \folgt (e)};
\item
{\rm (d) \folgt (e)};
\item
{\rm (e) \folgt (b)};
\item
if $\ga<\tau$, then {\rm (a) \folgt (b)};
\item
if $X$ and $Y$ are normed spaces, then {\rm (d) \folgt (f)} and {\rm (e) \folgt (g)}.
\cnta
\end{enumerate}
Suppose $X$ and $Y$ are complete and $f_+$ is lower semicontinuous (in the product topology) near $(\bar{x},\by)$.
Then,
\begin{enumerate}
\cntb
\item
if $\tau\le\ga$, then {\rm (b) \folgt (a)}.
\item
if $X$ and $Y$ are Asplund spaces, then
{\rm (d)~\iff (f)} and {\rm (e) \iff (g)}.
\end{enumerate}
\end{corollary}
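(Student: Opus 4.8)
The plan is to read Corollary~\ref{3C1.1} as an assembly of the three quantitative results already available: the pointwise and limiting slope inequalities of Proposition~\ref{pr2}, the primal--dual comparison of Theorem~\ref{th-f}, and the exact relation between the error bound modulus $\Er f(\bx,\by)$ and the uniform strict slope $\overline{|\nabla{f}|}{}^{\diamond}(\bx,\by)$ in Theorem~\ref{3T1}. All conditions (a)--(g) have been restated in the ``consequently'' form, so each implication reduces to comparing a slope (or ratio) at a generic point $(x,y)$ with $d(x,\bx)<\rho$ and $0<f(x,y)<\rho$, and then passing to the infimum over such points and the limit $\rho\downarrow0$. Property~(P2) is the device that lets the $y$-component be absorbed throughout.

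For the purely primal implications, the key observation is that a maximum dominates each of its entries. Hence the quantity in~(e), namely the liminf of $\max\{|\nabla{f}|_\rho(x,y),\,f(x,y)/d(x,\bx)\}$, is bounded below both by the liminf of $f(x,y)/d(x,\bx)$ appearing in~(c) and by $\overline{|\nabla{f}|}{}^{>}(\bx,\by)$ appearing in~(d); this yields (c)~\folgt (e) and (d)~\folgt (e) immediately. For (e)~\folgt (b) I would establish the pointwise estimate $|\nabla{f}|_\rho^{\diamond}(x,y)\ge\max\{|\nabla{f}|_\rho(x,y),\,f(x,y)/d(x,\bx)\}$, valid for all sufficiently small $\rho$: the first entry is Proposition~\ref{pr2}(i), while the second comes from testing the supremum in \eqref{nls-f} at $(u,v)=(\bx,\by)$, where $f_+(\bx,\by)=0$, and using (P2) exactly as in Proposition~\ref{pr2}(iii) to guarantee that the term $\rho\,d(y,\by)$ in $d_\rho((x,y),(\bx,\by))$ does not dominate $d(x,\bx)$. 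Passing to the infimum over admissible $(x,y)$ and letting $\rho\downarrow0$ then converts (e) into $\overline{|\nabla{f}|}{}^{\diamond}(\bx,\by)>\ga$, which is~(b).

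The error bound and primal--dual links are then read off the cited theorems. Since an error bound with constant $\tau$ forces $\Er f(\bx,\by)\ge\tau$ (this is the characterization via \eqref{rr2}), Theorem~\ref{3T1}(i) gives $\overline{|\nabla{f}|}{}^{\diamond}(\bx,\by)\ge\Er f(\bx,\by)\ge\tau>\ga$ whenever $\ga<\tau$, which is the implication (a)~\folgt (b); conversely, under completeness and lower semicontinuity Theorem~\ref{3T1}(ii) upgrades (b) to $\Er f(\bx,\by)=\overline{|\nabla{f}|}{}^{\diamond}(\bx,\by)>\ga\ge\tau$, so the error bound holds with constant $\tau$, giving (b)~\folgt (a). For the normed-space implications, (d)~\folgt (f) is immediate from Theorem~\ref{th-f}(ii), and (e)~\folgt (g) follows by applying the pointwise inequality $|\nabla{f}|_\rho(x,y)\le|\sd{f}|_{\rho^2}(x,y)+\rho$ of Theorem~\ref{th-f}(i) to the slope branch of the $\max$ (the ratio branch being identical in (e) and (g)); the additive $\rho$ and the passage from the $\rho$- to the $\rho^2$-neighbourhood are harmless in the limit $\rho\downarrow0$, just as in the deduction of Theorem~\ref{th-f}(ii) from~(i).

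The genuine work is concentrated in the Asplund equivalences. Here (d)~\iff (f) is nothing but the equality $\overline{|\nabla{f}|}{}^{>}(\bx,\by)=\overline{|\sd{f}|}{}^{>}(\bx,\by)$ of Theorem~\ref{th-f}(iii), and (e)~\folgt (g) is already covered above. The hard part will be (g)~\folgt (e): it has no pointwise counterpart, since the dual slope cannot be bounded below by the primal slope at the same point. I expect to re-run the Ekeland variational principle (Lemma~\ref{l01}) together with the fuzzy sum rule (Lemma~\ref{l02}), following the scheme of the proof of Theorem~\ref{th-f}(iii), but now carrying the extra entry $f/d(\cdot,\bx)$ inside the $\max$. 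The delicate step is to check that the nearby point $(z,w)$ delivered by the fuzzy sum rule still respects the ratio constraint $f(z,w)/d(z,\bx)\le\ga$ up to a vanishing error; this is where lower semicontinuity of $f_+$ and the quantitative control of $f$ and $d(\cdot,\bx)$ under the perturbation $d_\rho((z,w),(x,y))<\eps$ (in the spirit of the estimates in the proof of Theorem~\ref{3T1}) must be invoked. Everything outside this single implication is pointwise slope algebra combined with (P2).
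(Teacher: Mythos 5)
Your proposal is correct and is essentially the paper's own argument: the paper offers no separate proof, presenting the corollary precisely as a consequence of Theorems~\ref{3T1} and \ref{th-f} and Proposition~\ref{pr2}, and your assembly---the max dominating its branches for (c),(d)~$\Rightarrow$~(e), testing the supremum in \eqref{nls-f} at $(\bx,\by)$ together with (P2) for (e)~$\Rightarrow$~(b), Theorem~\ref{3T1} combined with the characterization \eqref{rr2} for the links with (a), and Theorem~\ref{th-f} for the dual conditions---is exactly that derivation. Your one addition is also right and fills a point the paper glosses over: the Asplund implication (g)~$\Rightarrow$~(e) does not follow from the statement of Theorem~\ref{th-f}(iii) alone but requires re-running its proof with the ratio term carried inside the max, the only delicate step being, as you say, preservation of the strict bound $f(z,w)/\|z-\bx\|<\ga$ under the $\eps$-perturbation; note only that Ekeland's principle is not needed there, since a small $\rho$-slope already makes $(x,y)$ a local minimizer of $(u,v)\mapsto f(u,v)+\ga'\|(u,v)-(x,y)\|_{\rho'}$, so the fuzzy sum rule (Lemma~\ref{l02}) alone suffices.
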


The next corollary presents a qualitative version of Corollary~\ref{3C1.1}.

\begin{corollary}\label{C1-0}
Suppose $X$ and $Y$ are complete metric spaces and $f_+$ is lower semicontinuous (in the product topology) near $(\bar{x},\by)$.
Then,
$f$ has an error bound at $(\bx,\by)$ provided that one of the following conditions holds true:
\renewcommand {\theenumi} {\alph{enumi}}
\begin{enumerate}
\item
$\overline{|\nabla{f}|}{}^{\diamond}(\bar{x},\by)>0$; \item
$\ds\liminf_{x\to\bar{x},\;f(x,y)\downarrow0} \frac{f(x,y)}{d(x,\bx)}>0$;
\item
$\overline{|\nabla{f}|}{}^{>}(\bar{x},\by)>0$;
\item
$\ds\liminf_{x\to\bar{x},\;\frac{f(x,y)}{d(x,\bx)}\downarrow0}
|\nabla{f}|(x,y)>0$;
\item
$X$ and $Y$ are Asplund spaces and $\overline{|\sd{f}|}{}^{>}(\bar{x},\by)>0$; \item
$X$ and $Y$ are Asplund spaces and $\ds\liminf_{x\to\bar{x},\;\frac{f(x,y)}{\|x-\bx\|}\downarrow0}
|\sd{f}|(x,y)>0$.
\end{enumerate}
\renewcommand {\theenumi} {\roman{enumi}}
Moreover,
\begin{enumerate}
\item
condition {\rm (a)} is also necessary for the local error bound property of $f$ at $(\bx,\by)$;
\item
{\rm (b) \folgt (d)};
\item
{\rm (c) \folgt (d)};
\item
{\rm (d) \folgt (a)};
\item
{\rm (e) \folgt (f)};
\item
if $X$ and $Y$ are Asplund, then {\rm (e) \iff (c)} and {\rm (f) \iff (d)}.
\end{enumerate}
\end{corollary}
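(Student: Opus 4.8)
The plan is to deduce Corollary~\ref{C1-0} from the quantitative Corollary~\ref{3C1.1} and Theorem~\ref{3T1}, exploiting that every condition stated here is the ``$>0$'' specialisation of a ``$>\ga$'' condition there. I would start from the backbone equivalence: by Theorem~\ref{3T1}(ii), completeness of $X,Y$ and lower semicontinuity of $f_+$ near $(\bx,\by)$ yield $\Er f(\bx,\by)=\overline{|\nabla f|}{}^{\diamond}(\bx,\by)$, so $f$ has an error bound at $(\bx,\by)$ if and only if (a) holds. This simultaneously gives the sufficiency of (a) in the main assertion and statement (i).

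The one substantive step is to match the ``restricted liminf'' conditions (d) and (f) with the max-type conditions carried (in their ``$>\ga$'' form) by conditions (e) and (g) of Corollary~\ref{3C1.1}. Concretely, I would verify the elementary equivalence
\[
\liminf_{x\to\bx,\,\frac{f(x,y)}{d(x,\bx)}\downarrow0}|\nabla f|(x,y)>0
\quad\Longleftrightarrow\quad
\exists\,\ga>0:\ \liminf_{x\to\bx,\,f(x,y)\downarrow0}\max\left\{|\nabla f|(x,y),\frac{f(x,y)}{d(x,\bx)}\right\}>\ga,
\]
together with its subdifferential counterpart relating (f) to condition (g). The proof is a two-case split at a threshold: on the region where $f(x,y)/d(x,\bx)$ exceeds the threshold the maximum is automatically large, while on the complementary region the restricted liminf supplies a positive lower bound on $|\nabla f|(x,y)$; conversely, a max-bound forces $|\nabla f|(x,y)$ to be large exactly where $f(x,y)/d(x,\bx)$ is small. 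The two index sets are compatible because $x\to\bx$ with $f(x,y)/d(x,\bx)\to0$ forces $f(x,y)\to0$ (since $d(x,\bx)\to0$). This identification, which rests on reading $|\nabla f|(x,y)$ as $\lim_{\rho\downarrow0}|\nabla f|_\rho(x,y)$ (legitimate by the monotonicity of $|\nabla f|_\rho$ and $|\sd f|_\rho$ in $\rho$), is the main obstacle; property (P2) enters only through the slope definitions already built into Corollary~\ref{3C1.1}.

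With these equivalences in hand, the remaining assertions are read off from Corollary~\ref{3C1.1} by selecting, for each hypothesis, a positive $\ga$ witnessing ``$>0$'': (ii), (b) \folgt (d), is implication (i) of Corollary~\ref{3C1.1}; (iii), (c) \folgt (d), is implication (ii); (iv), (d) \folgt (a), is implication (iii); and the Asplund equivalences in (vi), (e) \iff (c) and (f) \iff (d), are implication (vii). For (v), (e) \folgt (f), where the Asplund setting is presupposed, I would chain (e) \iff (c) \folgt (d) \iff (f) using (vi) and (iii). Finally the sufficiency for the full list follows because each of (a)--(f) forces (a): directly for (a); through (b) \folgt (d) \folgt (a) and (c) \folgt (d) \folgt (a); through (d) \folgt (a); and, in the Asplund case, through (e) \iff (c) \folgt (d) \folgt (a) and (f) \iff (d) \folgt (a). Then (a) delivers the error bound via Theorem~\ref{3T1}(ii), and everything else is bookkeeping across the two labellings.
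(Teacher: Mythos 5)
Your overall strategy is the paper's: Corollary~\ref{C1-0} is indeed meant to be read off from Corollary~\ref{3C1.1} and Theorem~\ref{3T1} by specializing ``$>\ga$'' to ``$>0$'', and your handling of conditions (a), (b), (c), (e) and of claims (i), (ii), (iii) is correct. The gap sits exactly at what you call the ``one substantive step'': the identification of the restricted-liminf conditions (d) and (f) with conditions (e) and (g) of Corollary~\ref{3C1.1}. You propose to read $|\nabla{f}|(x,y)$ as the pointwise limit $\lim_{\rho\downarrow0}|\nabla{f}|_{\rho}(x,y)$. Under that reading your two-case split does relate the two liminf expressions to each other, but the resulting max-type condition is \emph{not} condition (e) of Corollary~\ref{3C1.1}: as the ``i.e.'' clause there makes explicit, (e) means that there is a \emph{single} $\rho>0$ such that $|\nabla{f}|_{\rho}(x,y)>\ga$ for \emph{all} admissible $(x,y)$ with $f(x,y)/d(x,\bx)\le\ga$ --- the subscript $\rho$ of the slope is tied to the localization radius. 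Your pointwise reading only gives, at each admissible point, some $\rho$ \emph{depending on that point} with $|\nabla{f}|_{\rho}(x,y)>\ga$; since $|\nabla{f}|_{\rho}(x,y)$ increases as $\rho\downarrow0$, this is strictly weaker, and no uniform $\rho$ can be extracted from it. Hence, when you invoke implication (iii) of Corollary~\ref{3C1.1} ((e) \folgt (b)) to obtain claim (iv) ((d) \folgt (a)), and implication (vii) to obtain claim (vi) ((f) \iff (d)), you feed those implications a hypothesis weaker than the one they require. This defect breaks precisely claim (iv), the directions of (vi) in which (d) or (f) is the hypothesis, your chain for (v), and the sufficiency of (d) and (f) for the error bound; under your reading you are in effect asserting a strictly stronger statement than the paper's, which the machinery of Corollary~\ref{3C1.1} does not deliver.

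The repair is interpretive rather than mathematical. Conditions (d) and (f) of Corollary~\ref{C1-0} must be read the way the paper reads the displayed formulas in Corollary~\ref{3C1.1}(e),(g), and the way it writes the analogous conditions out in full in the subregularity version, Corollary~\ref{C1-0+}(d),(f): namely as
$\lim_{\rho\downarrow0}\,\inf\,\max\bigl\{|\nabla{f}|_{\rho}(x,y),\,f(x,y)/d(x,\bx)\bigr\}>0$,
the infimum being over $d(x,\bx)<\rho$, $0<f(x,y)<\rho$, with the $\rho$ in the slope tied to the localization parameter (and analogously with $|\sd{f}|_{\rho}$ for (f)). With this reading no equivalence lemma is needed at all: condition (d) is literally ``there exists $\ga>0$ such that Corollary~\ref{3C1.1}(e) holds'', condition (f) is ``there exists $\ga>0$ such that Corollary~\ref{3C1.1}(g) holds'', and every assertion of Corollary~\ref{C1-0} becomes a direct instantiation of Corollary~\ref{3C1.1} combined with Theorem~\ref{3T1}(ii), exactly as in the rest of your bookkeeping.
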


\section{Metric subregularity}\label{S3}

From now on, $F:X\rightrightarrows Y$ is a set-valued mapping between metric spaces and $(\bx,\by)\in\gph F$.
We are targeting the metric subregularity property, the main tool being the error bound criteria discussed in the previous section.

\subsection{Definition}
Set-valued mapping $F$ is \emph{metrically subregular} at $(\bx,\by)$ with constant $\tau>0$ if there exists a neighbourhood $U$ of $\bx$ such that
\begin{equation}\label{MR0}
\tau d(x,F^{-1}(\by))\le d(\by,F(x)) \quad \mbox{ for all } x \in U.
\end{equation}

The following (possibly infinite) constant is convenient for characterizing the metric subregularity property:
\begin{gather}\label{CMR}
\sr[F](\bx,\by):=
\liminf_{\substack{x\to\bx\\x\notin F\iv(\by)}}
\frac{d(\by,F(x))}
{d(x,F^{-1}(\by))}.
\end{gather}
It is easy to check that $F$ is metrically subregular at
$(\bx,\by)$ if and only if $\sr[F](\bx,\by)>0$.
Moreover, when positive, constant \eqref{CMR} provides a
quantitative characterization of this property.
It coincides with the supremum of all positive $\tau$ such that \eqref{MR0} holds for some $U$.

Property \eqref{MR0} can be considered as a special case of the error bound property \eqref{eb} while constant \eqref{CMR} reduces to \eqref{rr2} if $f$ is defined on $X\times Y$ by
\begin{gather}\label{f}
f(x,y):=
\begin{cases}
d(y,\by) & \text{if } (x,y)\in\gph F,\\
+\infty & \text{otherwise}.
\end{cases}
\end{gather}
Observe that $f$ is nonnegative, conditions (P1) and (P2) are trivially satisfied, and
\begin{gather}\label{S(F)}
S(f)=F^{-1}(\by).
\end{gather}

Another important observation is that besides \eqref{f} one can relate to $F$ other real-valued functions satisfying conditions (P1), (P2), and \eqref{S(F)}.
This way, it is possible to generalize the criteria presented in the rest of the paper to nonlinear, particularly H\"older-type, regularity properties.

\subsection{Primal space slopes}
The nonlocal slopes \eqref{nls-f} and \eqref{uss-f} of $f$ in the current setting take the following form:
\begin{gather*}\label{7nls}
|\nabla{F}|_{\rho}^{\diamond}(x,y):=
\sup_{\substack{(u,v)\ne(x,y)\\(u,v)\in\gph F}}
\frac{[d(y,\by)-d(v,\by)]_+}{d_\rho((u,v),(x,y))},
\\\label{7uss}
\overline{|\nabla{F}|}{}^{\diamond}(\bar{x},\by):=
\lim_{\rho\downarrow0}
\inf_{\substack{d(x,\bx)<\rho,\,d(y,\by)<\rho\\
(x,y)\in\gph F,\,x\notin F\iv(\by)}}\,
|\nabla{F}|{}^{\diamond}_{\rho}(x,y).
\end{gather*}
We will call the above constants, respectively,
the \emph{nonlocal $\rho$-slope} of $F$ at $(x,y)\in\gph F$ and
the \emph{uniform strict slope} of $F$ at $(\bx,\by)$.

The local slopes \eqref{ls-f} and \eqref{ss-f}, when applied to function \eqref{f}, produce the following definitions:
\begin{align}\label{srho}
|\nabla{F}|_{\rho}(x,y):=&
\limsup_{\substack{u\to x,\,v\to y,\,
(u,v)\ne(x,y)\\
(u,v)\in\gph F}}
\frac{[d(y,\by)-d(v,\by)]_+} {d_\rho((u,v),(x,y))},
\\\label{q-ss}
\overline{|\nabla{F}|}{}(\bar{x},\by):=&
\lim_{\rho\downarrow0}
\inf_{\substack{d(x,\bx)<\rho,\,d(y,\by)<\rho\\
(x,y)\in\gph F,\,x\notin F\iv(\by)}}\,
|\nabla{F}|_{\rho}(x,y).
\end{align}
They are called, respectively,
the (local) \emph{$\rho$-slope} of $F$ at $(x,y)\in\gph F$ and
the \emph{strict slope} of $F$ at
$(\bx,\by)$.

The next statement is a consequence of Proposition~\ref{pr2}.

\begin{proposition}\label{P2}
\begin{enumerate}
\item
$|\nabla{F}|_{\rho}(x,y)\le
|\nabla{F}|_{\rho}^{\diamond}(x,y)$
for all $\rho>0$ and $(x,y)\in\gph F$,
\item
$\overline{|\nabla{F}|}{}(\bar{x},\by)\le
\overline{|\nabla{F}|}{}^{\diamond}(\bx,\by)$,
\item
$\overline{|\nabla{F}|}{}^{\diamond}(\bar{x},\by)\ge
\ds\liminf_{\substack{x\to\bar{x},\;y\to\by\\
(x,y)\in\gph F,\,x\notin F\iv(\by)}}
\frac{d(y,\by)}{d(x,\bx)}$.
\end{enumerate}
\end{proposition}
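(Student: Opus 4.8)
The plan is to read off all three assertions from Proposition~\ref{pr2} applied to the nonnegative function $f$ of \eqref{f}, once the primal $\rho$-slopes of $F$ in \eqref{7nls} and \eqref{srho} have been identified with those of this particular $f$. Since $f\equiv+\infty$ off $\gph F$ while $f=f_+=d(\cdot,\by)$ on $\gph F$, every increment $[f(x,y)-f_+(u,v)]_+$ in \eqref{nls-f} and $[f(x,y)-f(u,v)]_+$ in \eqref{ls-f} vanishes as soon as $(u,v)\notin\gph F$. Hence for each $(x,y)\in\gph F$ the supremum in \eqref{nls-f} and the limsup in \eqref{ls-f} may be restricted to $(u,v)\in\gph F$, giving $|\nabla f|^{\diamond}_{\rho}(x,y)=|\nabla F|^{\diamond}_{\rho}(x,y)$ and $|\nabla f|_{\rho}(x,y)=|\nabla F|_{\rho}(x,y)$. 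With this identification, part~(i) is Proposition~\ref{pr2}(i) whenever $y\ne\by$ (so that $f(x,y)>0$); and when $y=\by$ the bracket $[\,0-d(v,\by)\,]_+$ is identically zero, so $|\nabla F|_{\rho}(x,\by)=0$ and the inequality is trivial.

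For part~(ii) I would merely take the infimum of the pointwise bound in~(i) over the common index set $\{(x,y)\in\gph F:\ d(x,\bx)<\rho,\ d(y,\by)<\rho,\ x\notin F\iv(\by)\}$ shared by \eqref{q-ss} and \eqref{7uss}, and then let $\rho\downarrow0$; nothing beyond~(i) is required.

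Part~(iii) is the one step that resists a bare translation, and I expect it to be the main obstacle. Specializing Proposition~\ref{pr2}(iii) only delivers the liminf over all $(x,y)\in\gph F$ with $f(x,y)\downarrow0$, i.e.\ with $y\to\by$, $y\ne\by$, whereas the asserted lower bound is a liminf over the smaller set carrying the extra restriction $x\notin F\iv(\by)$, hence a strictly larger (stronger) right-hand side. To recover it I would use $(\bx,\by)\in\gph F$, where $f(\bx,\by)=0$, as a fixed comparison point in \eqref{7nls}: for any admissible $(x,y)$ one has $x\notin F\iv(\by)$, so $x\ne\bx$, the comparison point is legitimate, and
\begin{equation*}
|\nabla F|^{\diamond}_{\rho}(x,y)\ge\frac{d(y,\by)}{d_\rho((\bx,\by),(x,y))} =\frac{d(y,\by)}{\max\{d(x,\bx),\,\rho\,d(y,\by)\}} =\min\left\{\frac{d(y,\by)}{d(x,\bx)},\,\frac{1}{\rho}\right\},
\end{equation*}
the product-space analogue of \eqref{gli0+}. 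Taking the infimum over the index set of \eqref{7uss} and using $\inf\min\{\cdot,\rho\iv\}=\min\{\inf(\cdot),\rho\iv\}$, then letting $\rho\downarrow0$ so that the branch $\rho\iv\to+\infty$ becomes inactive, leaves exactly $\overline{|\nabla F|}{}^{\diamond}(\bx,\by)\ge\liminf\frac{d(y,\by)}{d(x,\bx)}$ with the liminf over $x\to\bx$, $y\to\by$, $(x,y)\in\gph F$, $x\notin F\iv(\by)$. The only care needed is the elementary interchange of infimum and minimum and the observation that $\rho\iv$ dominates once $\rho<1/\ga$ for any $\ga$ above the right-hand side.
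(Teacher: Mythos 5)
Your proposal is correct, and for parts (i) and (ii) it is essentially the paper's own argument: the paper disposes of Proposition~\ref{P2} with the single sentence that it is ``a consequence of Proposition~\ref{pr2}'', which amounts to your identification of the slopes of $F$ with those of the function $f$ in \eqref{f} (the brackets in \eqref{nls-f} and \eqref{ls-f} vanish off $\gph F$), followed by the pointwise inequality and an infimum over the common index set. Where you add genuine value is part (iii), and your diagnosis of the obstacle is accurate: the liminf in Proposition~\ref{P2}(iii) runs over graph points with $x\notin F\iv(\by)$, whereas the literal specialization of Proposition~\ref{pr2}(iii) only bounds $\overline{|\nabla{F}|}{}^{\diamond}(\bx,\by)$ from below by the liminf over the \emph{larger} set of graph points with $y\ne\by$, which is a weaker bound. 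The two can differ drastically: for $F(x)=\{0,x^{2}\}$ on $\R$ with $(\bx,\by)=(0,0)$ one has $F\iv(\by)=\R$, so the right-hand side of (iii) equals $+\infty$ (infimum over the empty set), while the specialized bound is $0$ along the points $(x,x^{2})$. The same index-set discrepancy incidentally prevents (ii) from being a literal corollary of Proposition~\ref{pr2}(ii) as well, but your derivation of (ii) directly from (i) over the common index set of \eqref{srho}--\eqref{q-ss} quietly sidesteps that. Your repair of (iii) --- inserting the comparison point $(\bx,\by)\in\gph F$ (legitimate since $x\notin F\iv(\by)$ forces $x\ne\bx$ and $y\ne\by$) to get $|\nabla{F}|_{\rho}^{\diamond}(x,y)\ge\min\{d(y,\by)/d(x,\bx),\rho\iv\}$, and letting the $\rho\iv$ branch become inactive as $\rho\downarrow0$ --- is exactly the proof of Proposition~\ref{pr2}(iii) re-run on the correct index set; for this $f$ the quantity controlled there by (P2), namely $f(x,y)/d(y,\by)$, is identically $1$, which is why the clean $\rho\iv$ appears. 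So your argument costs nothing extra and fills a genuine looseness in the paper's one-line citation. One slip of wording at the very end: $\rho\iv$ dominates once $\ga$ is \emph{below} the right-hand side (i.e., any value the liminf exceeds), not above; the argument itself is unaffected.
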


\subsection{Subdifferential slopes}
If $X$ and $Y$ are normed linear spaces, one can define the \emph{subdifferential $\rho$-slope} ($\rho>0$) of $F$ at $(x,y)\in\gph F$ with $y\ne\by$ as
\begin{gather}\label{6srs}
|\sd{F}|_{\rho}(x,y)
:=\inf_{\substack{x^*\in D^*F(x,y)(J(y-\by)+\rho\B^*)}}
\|x^*\|,
\end{gather}
where $J$ is the duality mapping defined by \eqref{J}.

Using \eqref{6srs}, we define the \emph{strict subdifferential slope} of $F$ at $(\bx,\by)$:
\begin{align}\label{phi-sss}
\overline{|\sd{F}|}{}(\bar{x},\by):=&
\lim_{\rho\downarrow0}
\inf_{\substack{\|x-\bx\|<\rho,\,\|y-\by\|<\rho\\
(x,y)\in\gph F,\,x\notin F\iv(\by)}}\,
|\sd{F}|_{\rho}(x,y).
\end{align}

In some situations, more advanced versions of \eqref{6srs} and \eqref{phi-sss} are required:
\begin{align}\label{6asrs}
|\sd{F}|^a_{\rho}(x,y)
:=&\liminf_{\substack{v\to y-\by}}\
\inf_{\substack{x^*\in D^*F(x,y)(J(v)+\rho\B^*)}}
\|x^*\|,
\\\label{phi-asss}
\overline{|\sd{F}|}{}^a(\bar{x},\by):=&
\lim_{\rho\downarrow0}
\inf_{\substack{\|x-\bx\|<\rho,\,\|y-\by\|<\rho\\
(x,y)\in\gph F,\,x\notin F\iv(\by)}}\,
|\sd{F}|^a_{\rho}(x,y).
\end{align}
They are called, respectively,
the \emph{approximate subdifferential $\rho$-slope} ($\rho>0$) of $F$ at $(x,y)\in\gph F$ with $y\ne\by$ and the \emph{approximate strict subdifferential slope} of $F$ at $(\bx,\by)$.

The next proposition gives relationships between the subdifferential slopes \eqref{6srs}--\eqref{phi-asss} which
follow directly from the definitions.

\begin{proposition}\label{5P31}
\begin{enumerate}
\item
$|\sd{F}|^a_{\rho}(x,y) \le|\sd{F}|_{\rho}(x,y)$\\ for all $\rho>0$ and $(x,y)\in\gph F$;
\item
$\overline{|\sd{F}|}{}^a(\bar{x},\by) \le\overline{|\sd{F}|}{}(\bar{x},\by)$.
\end{enumerate}
\end{proposition}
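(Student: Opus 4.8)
The plan is to derive both inequalities straight from the definitions, observing that the approximate slopes are built from the very same expression that defines the ordinary ones, merely passed through a $\liminf$ over the codirection $v$.

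For part (i), I would first isolate the auxiliary function
$$
g(v):=\inf_{x^*\in D^*F(x,y)(J(v)+\rho\B^*)}\|x^*\|,\qquad v\in Y .
$$
Comparing with \eqref{6srs} and \eqref{6asrs}, one reads off immediately that $|\sd{F}|_{\rho}(x,y)=g(y-\by)$, while $|\sd{F}|^a_{\rho}(x,y)=\liminf_{v\to y-\by}g(v)$. Since the $\liminf$ in \eqref{6asrs} is taken over a \emph{full} neighbourhood of $y-\by$ (note the absence of the restriction ``$v\ne y-\by$'', in contrast to \eqref{li}), the point $v=y-\by$ itself is admissible in every ball $B_\eps(y-\by)$. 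Hence $\inf_{v\in B_\eps(y-\by)}g(v)\le g(y-\by)$ for each $\eps>0$, and taking the supremum over $\eps$ gives $\liminf_{v\to y-\by}g(v)\le g(y-\by)$. This is exactly $|\sd{F}|^a_{\rho}(x,y)\le|\sd{F}|_{\rho}(x,y)$.

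For part (ii), I would simply propagate the pointwise estimate from part (i) through the two outer operations in \eqref{phi-sss} and \eqref{phi-asss}. Both constants are formed from the respective $\rho$-slopes by taking the infimum over the same index set $\{(x,y)\in\gph F:\ \|x-\bx\|<\rho,\ \|y-\by\|<\rho,\ x\notin F\iv(\by)\}$ and then letting $\rho\downarrow0$. Because $|\sd{F}|^a_{\rho}(x,y)\le|\sd{F}|_{\rho}(x,y)$ at every admissible $(x,y)$, the infimum of the left-hand side does not exceed that of the right-hand side for each fixed $\rho$; passing to the limit as $\rho\downarrow0$ is order-preserving, so it yields $\overline{|\sd{F}|}{}^a(\bar{x},\by)\le\overline{|\sd{F}|}{}(\bar{x},\by)$.

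The argument presents no genuine obstacle; the only point that deserves care is the convention governing the $\liminf$ in \eqref{6asrs}. The inequality in part (i) rests on $v=y-\by$ being admitted among the codirections over which the $\liminf$ is formed. Were the limit instead taken over a deleted neighbourhood ($v\ne y-\by$), the bound $\liminf_{v\to y-\by}g(v)\le g(y-\by)$ could fail, and one would then have to invoke a semicontinuity property of the duality mapping $J$ to recover it; under the inclusive convention adopted here the statement is immediate.
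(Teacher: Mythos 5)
Your proof is correct and takes essentially the same route as the paper, which gives no separate argument but states that these relationships ``follow directly from the definitions'' --- precisely your pointwise comparison of the $\rho$-slopes followed by order-preserving passage through the common infimum and the limit $\rho\downarrow0$. Your closing caveat is moot: even under a deleted-neighbourhood reading of the $\liminf$ in \eqref{6asrs}, part (i) persists, because $J(tv)=J(v)$ for all $t>0$, so the points $v_t=t(y-\by)$ with $t>1$, $t\to1$, approach $y-\by$ while leaving your function $g$ unchanged (recall that $y\ne\by$ wherever these slopes are defined, so $y-\by\ne 0$ and these $v_t$ differ from $y-\by$).
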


The next proposition establishes relationships between the approximate subdifferential slopes \eqref{6asrs} and \eqref{phi-asss} of set-valued mapping $F$ and the corresponding ones of function $f$ defined by \eqref{f} in the Asplund space setting.

\begin{proposition}\label{5P32}
Suppose $X$ and $Y$ are Asplund, $\gph F$ is locally closed near $(\bar{x},\by)$, and function $f$ is given by \eqref{f}.
Then,
\begin{enumerate}
\item
$\ds|\sd{f}|_{\rho}(x,y)
\ge \liminf_{\substack{ (x',y')\to(x,y)\\(x',y')\in\gph F}}\
|\sd{F}|^a_{\rho}(x',y')$\\ for all $\rho>0$ and $(x,y)\in\gph F$ near $(\bar{x},\by)$ with $y\ne\by$;
\item
$\ds\overline{|\sd{f}|}{}^{>}(\bar{x},\by) \ge\overline{|\sd{F}|}{}^a(\bar{x},\by)$.
\end{enumerate}
\end{proposition}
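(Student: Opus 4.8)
The plan is to derive both parts from the fuzzy sum rule (Lemma~\ref{l02}) applied to the decomposition $f=g+\psi$ of the function \eqref{f}, where $g(x,y):=\norm{y-\by}$ is globally Lipschitz and $\psi$ is the indicator of $\gph F$ (equal to $0$ on $\gph F$ and $+\infty$ elsewhere), which is lower semicontinuous near $(\bx,\by)$ since $\gph F$ is locally closed there. Because $g$ is independent of $x$ and $y\ne\by$ at the points of interest, Lemma~\ref{ll01}(i) gives $\sd g(x',y')=\{0\}\times J(y'-\by)$, while $\sd\psi=N_{\gph F}$ by \eqref{Fr}. This is exactly the mechanism that turns Fr\'echet subgradients of $f$ into Fr\'echet normals of $\gph F$, i.e.\ into coderivatives $D^*F$.

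For part (i), fix $(x,y)\in\gph F$ near $(\bx,\by)$ with $y\ne\by$; if no $(x^*,y^*)\in\sd f(x,y)$ with $\norm{y^*}<\rho$ exists, then $|\sd{f}|_\rho(x,y)=+\infty$ and the inequality is trivial, so I take such a pair with $\norm{x^*}$ within $\eps$ of $|\sd{f}|_\rho(x,y)$. Applying Lemma~\ref{l02} in the Asplund space $X\times Y$ with tolerances $\eps_n\downarrow0$ yields points $(x_1^n,y_1^n)$ and $(x_2^n,y_2^n)\in\gph F$, both converging to $(x,y)$, together with $w_n^*\in J(y_1^n-\by)$ and $(a_n^*,b_n^*)\in N_{\gph F}(x_2^n,y_2^n)$ such that $\norm{x^*-a_n^*}\le\eps_n$ and $\norm{y^*-w_n^*-b_n^*}\le\eps_n$. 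By the definition of the coderivative, $a_n^*\in D^*F(x_2^n,y_2^n)(-b_n^*)$. Writing $-b_n^*=w_n^*+(\eta_n^*-y^*)$ with $\norm{\eta_n^*}\le\eps_n$, and choosing $\eps_n<\rho-\norm{y^*}$ (possible precisely because the inequality $\norm{y^*}<\rho$ is strict), I get $\norm{-b_n^*-w_n^*}\le\eps_n+\norm{y^*}<\rho$, hence $-b_n^*\in J(v_n)+\rho\B^*$ with $v_n:=y_1^n-\by\to y-\by$. Thus $a_n^*\in D^*F(x_2^n,y_2^n)(J(v_n)+\rho\B^*)$ and $\norm{a_n^*}\le\norm{x^*}+\eps_n$. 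Feeding the diagonal sequence $\bigl((x_2^n,y_2^n),v_n\bigr)$ into the inner liminf of definition \eqref{6asrs} and the outer liminf over $(x',y')\to(x,y)$, then letting $\eps_n\downarrow0$ and $\eps\downarrow0$, delivers $|\sd{f}|_\rho(x,y)\ge\liminf_{(x',y')\to(x,y),\,(x',y')\in\gph F}|\sd{F}|^a_\rho(x',y')$.

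The main obstacle is the reconciliation of the inner liminf with the two distinct evaluation points supplied by the fuzzy sum rule: $\sd g$ is computed at $(x_1^n,y_1^n)$, so the admissible dual direction $v_n=y_1^n-\by$ is attached to that point, whereas the coderivative lives at $(x_2^n,y_2^n)$, and $v_n$ differs from $y_2^n-\by$ by a quantity of order $\eps_n$. The \emph{approximate} slope is defined with a liminf over $v\to y-\by$ exactly to accommodate this discrepancy; the delicate step is to verify that the single direction $v_n$ produced at each stage, together with the convergences $(x_2^n,y_2^n)\to(x,y)$ and $v_n\to y-\by$, genuinely bounds the iterated liminf on the right-hand side. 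This coupling of $\eps_n\downarrow0$ with the graph-convergence is where essentially all the technical work resides, and it is the reason the ``approximate'' variant \eqref{6asrs}--\eqref{phi-asss}, rather than the plain \eqref{6srs}--\eqref{phi-sss}, is the right object here.

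For part (ii), I would apply (i) pointwise. Note first that for the function \eqref{f} the index region $\{d(x,\bx)<\rho,\,0<f(x,y)<\rho\}$ defining $\overline{|\sd{f}|}{}^>(\bx,\by)$ in \eqref{ssds-f} is precisely $\{(x,y)\in\gph F:\norm{x-\bx}<\rho,\,0<\norm{y-\by}<\rho\}$, and it contains the region defining $\overline{|\sd{F}|}{}^a(\bx,\by)$ in \eqref{phi-asss}, since $x\notin F\iv(\by)$ together with $(x,y)\in\gph F$ forces $y\ne\by$. Hence, for every admissible $(x,y)$ at scale $\rho$, part (i) bounds $|\sd{f}|_\rho(x,y)$ below by the infimum of $|\sd{F}|^a_\rho$ over graph points near $(x,y)$, which in turn is bounded below by the infimum of $|\sd{F}|^a_\rho$ over the full scale-$\rho$ region. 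Taking the infimum over the scale-$\rho$ region on the left and then the limit $\rho\downarrow0$ yields $\overline{|\sd{f}|}{}^>(\bx,\by)\ge\overline{|\sd{F}|}{}^a(\bx,\by)$; the only point needing a short remark is that the auxiliary graph points arising in the liminf of (i) with $x'\in F\iv(\by)$ are harmless, because they still satisfy $y'\ne\by$ for $(x',y')$ close enough to $(x,y)$, so the estimate of (i) applies to them as well.
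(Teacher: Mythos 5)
Your part (i) is, in substance, the paper's own proof: the same decomposition of \eqref{f} into the Lipschitz function $(u,v)\mapsto\|v-\by\|$ and the indicator of $\gph F$, the same application of Lemmas~\ref{l02} and~\ref{ll01}, the same conversion of Fr\'echet normals into coderivative values $a_n^*\in D^*F(x_2^n,y_2^n)(-b_n^*)$, and the same exploitation of the strict inequality $\|y^*\|<\rho$ to absorb the sum-rule error into $\rho\B^*$. The ``main obstacle'' you flag (one direction $v_n$ per point versus the inner liminf in \eqref{6asrs}) is passed over in the paper by exactly the move you make — feeding the sum-rule output directly into definition \eqref{6asrs} — so on (i) your attempt and the published proof are on equal footing.

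Part (ii) contains a genuine gap, and your closing remark does not close it. For the function \eqref{f}, the infimum defining $\overline{|\sd f|}{}^>(\bx,\by)$ in \eqref{ssds-f} runs over $A_\rho:=\{(x,y)\in\gph F:\|x-\bx\|<\rho,\,0<\|y-\by\|<\rho\}$, whereas \eqref{phi-asss} infimizes over the smaller set $B_\rho:=\{(x,y)\in A_\rho:\,x\notin F\iv(\by)\}$. Since an infimum over a larger set is smaller, the containment $B_\rho\subset A_\rho$ you invoke works \emph{against} you: you must control $|\sd f|_\rho$ at the points of $A_\rho\setminus B_\rho$, i.e.\ at points $(x,y)\in\gph F$ with $y\ne\by$ but $\by\in F(x)$. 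At such a point, part (i) bounds $|\sd f|_\rho(x,y)$ by a liminf of $|\sd F|^a_\rho$ over nearby graph points $(x',y')$, and nothing prevents \emph{all} of these from having $x'\in F\iv(\by)$; their values are then in no way comparable with the infimum over $B_\rho$. Your patch --- ``the estimate of (i) applies to them as well'' --- is a non sequitur: (i) applied at $(x',y')$ bounds $|\sd f|_\rho(x',y')$, whereas what enters the liminf is $|\sd F|^a_\rho(x',y')$, so iterating (i) never lands in $B_\rho$. The failure is not cosmetic: if $\by\in F(x)$ for every $x$ near $\bx$ (e.g.\ $F(x)=\{0,x\}$ on $\R$ at $(\bx,\by)=(0,0)$), then $B_\rho=\emptyset$, the right-hand side of (ii) is $+\infty$ by the empty-infimum convention, while all the quantities your chain of inequalities actually controls (subdifferential slopes at diagonal points) stay bounded; no amount of extra care within your scheme can bridge that.

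The idea your proposal is missing is the device on which the paper's proof of (ii) runs: carry out the estimate only at points with $x\notin F\iv(\by)$ (the paper's first inequality performs this restriction from the outset, matching \eqref{phi-asss}), and then use the hypothesis that $\gph F$ is locally closed --- which makes $F\iv(\by)$ locally closed, hence $d(x,F\iv(\by))>0$ at such points --- to choose $\eps<\min\bigl\{d(x,F\iv(\by)),\,\rho-\max\{\|x-\bx\|,\|y-\by\|\}\bigr\}$, so that $B_\eps(x)\cap F\iv(\by)=\emptyset$ and every auxiliary point produced by the fuzzy sum rule stays inside the region of \eqref{phi-asss}. This index mismatch is the genuinely delicate point of the whole proposition, so you are in good company in stumbling there; but it is telling that local closedness of $\gph F$, an explicit hypothesis of the statement, is never used in your part (ii) --- the $\eps$-selection just described is exactly where it is needed.
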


\begin{proof}
(i) Let $\rho>0$ and $(x,y)\in\gph F$ near $(\bar{x},\by)$ with $y\ne\by$ be given such that $\gph F$ is locally closed near $(x,y)$.
Observe that function $f$ is the sum of two functions on $X\times Y$:
$$(u,v)\mapsto \|v-\by\| \quad\mbox{and}\quad (u,v) \mapsto \delta_{\gph F}(u,v),$$
where $\delta_{\gph F}$ is the \emph{indicator function} of $\gph F$: $\delta_{\gph F}(u,v)=0$ if $(u,v)\in\gph F$ and $\delta_{\gph F}(u,v)=\infty$ otherwise.
Considering $X\times Y$ with the product topology, by Lemmas~\ref{l02} and \ref{ll01},
for any $\eps>0$, it holds
\begin{gather*}\label{5f01}
\sd f(x,y)
\subset\bigcup_{\substack{
\|(x',y')-(x,y)\|<\eps,\
(x',y')\in\gph F\\
(x^*,y^*)\in N_{\gph F}(x',y')\\
\|y''-y\|<\eps,\,v^*\in J(y'')}} \{x^*,y^*+v^*\}+\eps\B_{X^*\times Y^*}.
\end{gather*}
By definition \eqref{sds-f},
\begin{align*}
|\sd{f}|_{\rho}(x,y)
&\ge\inf_{\substack{
\|(x',y')-(x,y)\|<\eps,\
(x',y')\in\gph F\\
(x^*,y^*)\in N_{\gph F}(x',y')\\
\|y''-y\|<\eps,\,v^*\in J(y'')\\ \|y^*+v^*\|<\rho}} \|x^*\|-\eps
\\
&=\inf_{\substack{
\|(x',y')-(x,y)\|<\eps,\
(x',y')\in\gph F\\
x^*\in D^*F(x',y')(y^*)\\
\|y''-y\|<\eps,\,v^*\in J(y'')\\ \|y^*-v^*\|<\rho}} \|x^*\|-\eps
\\
&=\inf_{\substack{
\|(x',y')-(x,y)\|<\eps,\
(x',y')\in\gph F\\
x^*\in D^*F(x',y')(J(y'')+\rho\B^*)\\
\|y''-y\|<\eps}} \|x^*\|-\eps.
\end{align*}
Hence, by definition \eqref{6asrs},
\begin{align*}
|\sd{f}|_{\rho}(x,y)
&\ge\inf_{\substack{
\|(x',y')-(x,y)\|<\eps\\
(x',y')\in\gph F}} |\sd{F}|^a_{\rho}(x',y')-\eps.
\end{align*}
The conclusion follows after passing to the limit in the \RHS\ of the above inequality as $\eps\downarrow0$.

(ii) By (i) and definition \eqref{phi-asss},
for any $\eps>0$, we have:
\begin{align*}
\overline{|\sd{f}|}{}^{>}(\bar{x},\by)
&\ge\lim_{\rho\downarrow0}
&&\inf_{\substack{\|x-\bx\|<\rho,\,\|y-\by\|<\rho\\
(x,y)\in\gph F,\,
x\notin F\iv(\by)}}\,
\liminf_{\substack{ (x',y')\to(x,y)\\(x',y')\in\gph F}}\
|\sd{F}|{}^a_{\rho}(x',y')
\\
&\ge\lim_{\rho\downarrow0}
&&\inf_{\substack{\|x-\bx\|<\rho,\,\|y-\by\|<\rho\\
(x,y)\in\gph F,\,
x\notin F\iv(\by)}}\,
\inf_{\substack{\|x'-x\|<\eps,\,\|y'-y\|<\eps\\ (x',y')\in\gph F}}\
|\sd{F}|^a_{\rho}(x',y').
\end{align*}
Choosing, for a fixed $(x,y)$, a sufficiently small positive $\eps<\rho-\max\{\|x-\bx\|,\|y-\by\|\}$, we can ensure that $B_\eps(x)\cap F\iv(\by)=\emptyset$. Hence,
\begin{align*}
\overline{|\sd{f}|}{}^{>}(\bar{x},\by)
&\ge\lim_{\rho\downarrow0}
\inf_{\substack{\|x'-\bx\|<\rho,\,\|y'-\by\|<\rho\\
(x',y')\in\gph F,\,
x'\notin F\iv(\by)}}\,
|\sd{F}|_{\rho}(x',y') =\overline{|\sd{F}|}{}^a(\bar{x},\by).
\end{align*}
\end{proof}

Note that, unlike the primal space local slopes \eqref{srho} and \eqref{q-ss}, the approximate subdifferential slopes \eqref{6asrs} and \eqref{phi-asss} are not in general exact realizations of the corresponding subdifferential slopes \eqref{sds-f} and \eqref{ssds-f} when applied to function \eqref{f}.
Proposition~\ref{5P32} guarantees only inequalities and only in the Asplund space setting, the main tool being the fuzzy sum rule (Lemma~\ref{l02}) valid in Asplund spaces.
The next proposition presents
an important case of equalities in general normed spaces involving simpler
subdifferential slopes \eqref{6srs} and \eqref{phi-sss}.
The proof is similar to that of Proposition~\ref{5P32} with the replacement of the fuzzy sum rule by the exact either differentiable rule (see, e.g., \cite[Corollary~1.12.2]{Kru03.1}) or the convex sum rule (Moreau--Rockafellar formula).

\begin{proposition}\label{5P33}
If $X$ and $Y$ are normed spaces and either the norm in $Y$ is Fr\'echet differentiable away from $0_Y$, or $F$ is convex, then
\begin{enumerate}
\item
$\ds|\sd{f}|_{\rho}(x,y)
= |\sd{F}|_{\rho}(x,y)$\\ for all $\rho>0$ and $(x,y)\in\gph F$ near $(\bar{x},\by)$ with $y\ne\by$;
\item
$\ds\overline{|\sd{f}|}{}^{>}(\bar{x},\by) =\overline{|\sd{F}|}{}(\bar{x},\by)$.
\end{enumerate}
\end{proposition}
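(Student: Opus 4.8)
The plan is to mirror the proof of Proposition~\ref{5P32}, but to replace the fuzzy sum rule (Lemma~\ref{l02}), which only yields approximate inclusions and requires the Asplund property, by an \emph{exact} sum rule valid in the hypotheses at hand. This immediately upgrades the one-sided inequalities of Proposition~\ref{5P32} to equalities and removes the need for passing to a \lsc\ envelope of nearby subdifferentials. Since the function $f$ from \eqref{f} is, up to the indicator of $\gph F$, built from the norm $\|v-\by\|$, everything hinges on being able to compute $\sd f(x,y)$ exactly as a sum.

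First I would fix $\rho>0$ and $(x,y)\in\gph F$ near $(\bar x,\by)$ with $y\ne\by$, and write $f$ as the sum of the Lipschitz function $g(u,v):=\|v-\by\|$ and the indicator $\delta_{\gph F}$, exactly as in the proof of Proposition~\ref{5P32}. In the first scenario, where the norm in $Y$ is Fr\'echet differentiable away from $0_Y$, the function $g$ is (continuously) Fr\'echet differentiable at $(x,y)$ because $y\ne\by$, with derivative $(0,v^*)$ where $v^*=\nabla\|\cdot\|(y-\by)$; by Lemma~\ref{ll01}(i) this $v^*$ is the \emph{unique} element of $J(y-\by)$. The differentiable sum rule (\cite[Corollary~1.12.2]{Kru03.1}) then gives the exact identity $\sd f(x,y)=(0,v^*)+N_{\gph F}(x,y)$. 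In the second scenario, where $F$ is convex, $g$ is convex and continuous and $\delta_{\gph F}$ is convex, so the Moreau--Rockafellar formula yields $\sd f(x,y)=\sd g(x,y)+N_{\gph F}(x,y)$ in the sense of convex analysis, with $\sd g(x,y)=\{0\}\times J(y-\by)$ again by Lemma~\ref{ll01}(i). In both cases the ``$v^*\in J(y'')$ with $y''\to y$'' smearing that appeared in Proposition~\ref{5P32} collapses: there is no $\eps$-perturbation in the base point, and $v^*$ ranges exactly over $J(y-\by)$.

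Next I would translate this exact formula through definitions \eqref{sds-f} and \eqref{6srs}. A pair $(x^*,y^*)\in\sd f(x,y)$ with $\|y^*\|<\rho$ is, by the identity above, exactly a point with $x^*\in D^*F(x,y)(w^*)$ for some $w^*$ satisfying $y^*=-w^*+v^*$, i.e. $w^*\in v^*-y^*\cdot(\text{unit ball scaling})$; since $v^*\in J(y-\by)$ and $\|y^*\|<\rho$, this says precisely $w^*\in J(y-\by)+\rho\B^*$. Taking the infimum of $\|x^*\|$ over such pairs on the left reproduces the infimum defining $|\sd F|_\rho(x,y)$ in \eqref{6srs}, giving (i). For part (ii) I would then substitute (i) into the definition \eqref{ssds-f} of $\overline{|\sd f|}{}^>(\bar x,\by)$ and compare with \eqref{phi-sss}; the only point needing care is the standard one from the end of the proof of Proposition~\ref{5P32}, namely that for $(x,y)$ close enough to $(\bar x,\by)$ one may choose the windows so that the constraint $x\notin F\iv(\by)$ is automatically in force on a small ball, after which the two limiting infima coincide.

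The routine bookkeeping aside, the main obstacle is the careful matching of the \emph{dual} normalization between the two sides: the subdifferential $\rho$-slope \eqref{sds-f} of $f$ measures $\|y^*\|<\rho$ on the full dual component $y^*$, whereas \eqref{6srs} constrains the coderivative argument to lie in $J(y-\by)+\rho\B^*$. Getting these to agree exactly (rather than up to a harmless factor, as in the remark following Theorem~\ref{th-f}) is exactly what the \emph{exactness} of the sum rule buys us, because $v^*$ is pinned down uniquely in $J(y-\by)$ rather than allowed to wander over $J(y'')$ for nearby $y''$. I would therefore spend the bulk of the argument verifying that the decomposition $y^*=v^*-w^*$ sets up a bijection between the two index sets with $\|x^*\|$ preserved, so that the infima literally coincide.
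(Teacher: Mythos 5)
Your overall plan coincides with the paper's own proof, which consists precisely of the instruction to repeat the argument of Proposition~\ref{5P32} with the fuzzy sum rule replaced by the exact differentiable sum rule or the Moreau--Rockafellar formula. Your elaboration of part (i) is correct in substance: the exact identity $\sd f(x,y)=\{0\}\times J(y-\by)+N_{\gph F}(x,y)$ collapses the ``$v^*\in J(y'')$, $y''\to y$'' smearing of Proposition~\ref{5P32}, and the correspondence $y^*=v^*-w^*$ translates the constraint $\|y^*\|<\rho$ of \eqref{sds-f} into $w^*\in J(y-\by)+\rho\B^*$ of \eqref{6srs} (the only blemish is the open-versus-closed ball mismatch, which is already built into the paper's two definitions and is harmless for the limiting constants).

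The genuine gap is in part (ii), and your proposed fix does not do what you need. The two constants are infima over different index sets: \eqref{ssds-f} runs over all $(x,y)\in\gph F$ with $0<\|y-\by\|<\rho$, whereas \eqref{phi-sss} runs only over those with $x\notin F\iv(\by)$, a smaller set. Hence part (i) immediately gives $\overline{|\sd f|}{}^{>}(\bar x,\by)\le\overline{|\sd F|}{}(\bar x,\by)$, but \emph{not} the reverse inequality: points $(x,y)\in\gph F$ with $y\ne\by$ yet $\by\in F(x)$ lie in the $f$-side index set for every $\rho$ and can pull its infimum down, while they never appear on the $F$-side. Shrinking windows so that ``$x\notin F\iv(\by)$ is automatically in force'' is the step at the end of the paper's proof of Proposition~\ref{5P32}, but it runs in the opposite direction (from a point already outside $F\iv(\by)$ to its neighbours, and it also needs closedness of $F\iv(\by)$, which Proposition~\ref{5P33} does not assume); it cannot eliminate the offending points. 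In the convex case the gap can be closed: if $x\in F\iv(\by)$ and $y\ne\by$, testing the convex subgradient inequality at $(x,\by)\in\gph F$ forces $\|y^*\|\ge1$ for every $(x^*,y^*)\in\sd f(x,y)$, so $|\sd f|_\rho(x,y)=+\infty$ for $\rho<1$ and such points do not affect the infimum. In the smooth-norm case no such argument is available, and the discrepancy is real: for $F(x)=\{0,x\}$ on $\R$ with $(\bx,\by)=(0,0)$ one gets $\overline{|\sd f|}{}^{>}(0,0)=1$ while the $F$-side index set is empty, so $\overline{|\sd F|}(0,0)=+\infty$. The paper's template proof of Proposition~\ref{5P32}(ii) glosses over exactly the same point, so you have reproduced the paper faithfully, soft spot included; but a self-contained write-up of part (ii) must supply the convex-case argument above (or restrict to the inequality that actually follows) rather than assert that the two limiting infima coincide.
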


The next proposition gives a relationship between the primal space strict slope \eqref{q-ss} and the approximate strict subdifferential slope \eqref{phi-asss} of a set-valued mapping $F$ in the Asplund space setting.
It is a consequence of Theorem~\ref{th-f} and Proposition~\ref{5P32}.

\begin{proposition}\label{5P2}
If $X$ and $Y$ are Asplund and $\gph F$ is locally closed near $(\bar{x},\by)$, then
$\overline{|\nabla{F}|}{}(\bar{x},\by)\ge
\overline{|\sd{F}|}{}^{a}(\bx,\by)$.
\end{proposition}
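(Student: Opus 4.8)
The plan is to route everything through the associated scalar function of two variables and then chain two results already proved above. Throughout, let $f$ be the function attached to $F$ by \eqref{f}. This $f$ is nonnegative, so $f_+=f$, and by the construction in Section~\ref{S3} the primal space strict slope $\overline{|\nabla{F}|}{}(\bar{x},\by)$ in \eqref{q-ss} is precisely the realization of the strict outer slope $\overline{|\nabla{f}|}{}^{>}(\bar{x},\by)$ from \eqref{ss-f} for this particular $f$. So the first bookkeeping step is simply to rewrite the left-hand side of the claim as a slope of $f$.

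Next I would record that $f$ is \lsc\ near $(\bx,\by)$. This is immediate from the hypothesis that $\gph F$ is locally closed near $(\bx,\by)$: on a closed piece of the graph the map $(x,y)\mapsto d(y,\by)$ is continuous, while off the graph $f=+\infty$, so the lower level sets of $f$ are locally closed. With $f_+=f$ \lsc\ near $(\bx,\by)$ and $X,Y$ Asplund, the hypotheses of Theorem~\ref{th-f}(iii) are met, and that theorem delivers
\[
\overline{|\nabla{f}|}{}^{>}(\bar{x},\by)=\overline{|\sd{f}|}{}^{>}(\bar{x},\by).
\]
Then I would invoke Proposition~\ref{5P32}(ii), whose standing assumptions ($X,Y$ Asplund, $\gph F$ locally closed near $(\bx,\by)$, $f$ given by \eqref{f}) are exactly ours; it gives
\[
\overline{|\sd{f}|}{}^{>}(\bar{x},\by)\ge\overline{|\sd{F}|}{}^{a}(\bar{x},\by).
\]
Combining the slope identification with these two displays yields $\overline{|\nabla{F}|}{}(\bar{x},\by)=\overline{|\sd{f}|}{}^{>}(\bar{x},\by)\ge\overline{|\sd{F}|}{}^{a}(\bar{x},\by)$, which is the assertion.

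Since the two cited results carry all the analytic weight (Theorem~\ref{th-f}(iii) rests on the fuzzy sum rule in Asplund spaces, and Proposition~\ref{5P32} on the same tool applied to the sum $\|\cdot-\by\|+\delta_{\gph F}$), there is no genuinely hard new step; the only care needed is in the identification of the two primal slopes. The subtle point is that the index set in \eqref{q-ss} is governed by the condition $x\notin F\iv(\by)$, whereas the realization of \eqref{ss-f} uses $0<f(x,y)$, i.e.\ $y\ne\by$. These need not literally coincide, so if one wishes to avoid asserting exact equality of the two slopes it suffices to note the inclusion $\{(x,y)\in\gph F\mid x\notin F\iv(\by)\}\subseteq\{(x,y)\in\gph F\mid y\ne\by\}$ (if $\by\notin F(x)$ and $y\in F(x)$ then $y\ne\by$). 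Taking an infimum over the smaller set in \eqref{q-ss} can only increase its value, so $\overline{|\nabla{F}|}{}(\bar{x},\by)\ge\overline{|\nabla{f}|}{}^{>}(\bar{x},\by)$, and this one-sided inequality is all that the chain above requires. That observation, together with the two cited equalities and inequalities, completes the argument.
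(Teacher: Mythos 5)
Your proof is correct and follows exactly the route the paper intends: the paper derives Proposition~\ref{5P2} as an immediate consequence of Theorem~\ref{th-f}(iii) and Proposition~\ref{5P32}(ii), which is precisely your chain $\overline{|\nabla{F}|}{}(\bar{x},\by)\ge\overline{|\nabla{f}|}{}^{>}(\bar{x},\by)=\overline{|\sd{f}|}{}^{>}(\bar{x},\by)\ge\overline{|\sd{F}|}{}^{a}(\bx,\by)$ for $f$ given by \eqref{f}. Your extra care with the index sets (the condition $x\notin F\iv(\by)$ in \eqref{q-ss} versus $y\ne\by$ in the realization of \eqref{ss-f}), noting that only the one-sided inequality $\overline{|\nabla{F}|}{}(\bar{x},\by)\ge\overline{|\nabla{f}|}{}^{>}(\bar{x},\by)$ is needed, is a welcome refinement of a point the paper passes over silently.
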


\subsection{Criteria of metric subregularity}
We first get back to the original setting of a set-valued mapping $F:X\rightrightarrows Y$ between metric spaces with $(\bx,\by)\in\gph F$.
The next theorem is a consequence of Theorem~\ref{3T1}.

\begin{theorem}\label{7T1}
\begin{enumerate}
\item
$\sr[F](\bx,\by)\le
\overline{|\nabla{F}|}{}^{\diamond}(\bar{x},\by)$;
\item
if $X$ and $Y$ are complete and $\gph F$ is locally closed (in the product topology)
near $(\bar{x},\by)$,
then
$\sr[F](\bx,\by)=
\overline{|\nabla{F}|}{}^{\diamond}(\bar{x},\by)$.
\end{enumerate}
\end{theorem}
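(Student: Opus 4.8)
The plan is to read Theorem~\ref{7T1} as nothing more than Theorem~\ref{3T1} applied to the particular two-variable function $f:X\times Y\to\R_\infty$ defined by \eqref{f}. As the paper has already recorded, this $f$ is nonnegative, satisfies (P1) and (P2) trivially, has $S(f)=F^{-1}(\by)$ by \eqref{S(F)}, and makes the subregularity constant \eqref{CMR} coincide with the error bound modulus \eqref{rr2}, i.e. $\sr[F](\bx,\by)=\Er f(\bx,\by)$. What remains is to verify that the uniform strict slope appearing in the theorem is the specialization to \eqref{f} of the slope in Theorem~\ref{3T1}, and that the topological hypothesis matches up.

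First I would check the slopes pointwise. Substituting \eqref{f} into \eqref{nls-f}, one sees that a pair $(u,v)\notin\gph F$ contributes $0$ (since then $f_+(u,v)=+\infty$), so the supremum runs only over $(u,v)\in\gph F$, where $f(x,y)-f_+(u,v)=d(y,\by)-d(v,\by)$; hence $|\nabla{f}|_\rho^{\diamond}(x,y)=|\nabla{F}|_\rho^{\diamond}(x,y)$ for every $(x,y)\in\gph F$. The only genuine point to be careful about is that the infimum in \eqref{uss-f} ranges over $0<f(x,y)<\rho$, i.e. over all $(x,y)\in\gph F$ with $0<d(y,\by)<\rho$, whereas the definition of $\overline{|\nabla{F}|}{}^{\diamond}(\bx,\by)$ imposes the slightly stronger $x\notin F^{-1}(\by)$. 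I would dispose of the extra points (those with $(x,y)\in\gph F$, $\by\in F(x)$, $y\ne\by$) by testing the supremum in $|\nabla{F}|_\rho^{\diamond}(x,y)$ against $(u,v)=(x,\by)\in\gph F$, which gives $|\nabla{F}|_\rho^{\diamond}(x,y)\ge d(y,\by)/(\rho\,d(y,\by))=\rho\iv$; since $\rho\iv\to\infty$, these points cannot affect the value of the infimum as $\rho\downarrow0$, and the two uniform strict slopes therefore coincide. With these identifications, part (i) is immediate from Theorem~\ref{3T1}(i).

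For part (ii) the additional ingredient is to pass from local closedness of $\gph F$ near $(\bx,\by)$ to lower semicontinuity of $f_+=f$ near $(\bx,\by)$, which is the hypothesis of Theorem~\ref{3T1}(ii). On a neighbourhood where $\gph F$ is closed I would argue directly: if $(x_k,y_k)\to(x,y)$, then either infinitely many $(x_k,y_k)$ lie off $\gph F$, where $f=+\infty$ so the liminf inequality is automatic, or eventually $(x_k,y_k)\in\gph F$, whence closedness forces $(x,y)\in\gph F$ and continuity of $d(\cdot,\by)$ gives $\liminf_k f(x_k,y_k)=d(y,\by)=f(x,y)$. Thus $f$ is lower semicontinuous, and as $X$ and $Y$ are complete, Theorem~\ref{3T1}(ii) yields the equality $\sr[F](\bx,\by)=\overline{|\nabla{F}|}{}^{\diamond}(\bx,\by)$. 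I expect no serious obstacle here: the whole argument is bookkeeping plus the two small observations above, and the one requiring the slightest thought is the matching of the index sets in the uniform strict slope.
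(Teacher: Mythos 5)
Your proposal is correct and is exactly the paper's route: the paper proves Theorem~\ref{7T1} in one line as a consequence of Theorem~\ref{3T1} applied to the function \eqref{f}, relying on the observations recorded in Section~\ref{S3} that \eqref{f} is nonnegative, satisfies (P1)--(P2) and \eqref{S(F)}, that \eqref{CMR} reduces to \eqref{rr2}, and that \eqref{f} is lower semicontinuous where $\gph F$ is closed; you supply the same identifications and in fact handle more carefully than the paper the mismatch between the index set $0<f(x,y)<\rho$ in \eqref{uss-f} and the condition $x\notin F\iv(\by)$ in the definition of $\overline{|\nabla{F}|}{}^{\diamond}(\bar{x},\by)$, disposing of the extra points via the test pair $(u,v)=(x,\by)$, which forces the slope there to exceed $\rho\iv$. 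The only blemish is in your lower-semicontinuity verification: the dichotomy must be applied to a subsequence realizing the $\liminf$ rather than to the whole sequence (infinitely many off-graph terms do not by themselves make the inequality automatic when in-graph terms with small values are also present; there one again invokes local closedness), but this is a one-line repair and does not affect the argument.
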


By Proposition~\ref{3P1}, Theorem~\ref{7T1} is invariant on the choice of an admissible metric on $X\times Y$.

In the convex case, one can formulate a precise estimate in terms of subdifferential slopes in the Banach space setting.

\begin{proposition}\label{iti}
Suppose $X$ and $Y$ are Banach spaces and $\gph F$ is convex and locally closed (in the product topology) near $(\bx,\by)$.
Then, $\sr[F](\bar{x},\by) =\overline{|\sd{F}|}(\bar{x},\by)$.
\end{proposition}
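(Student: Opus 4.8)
The plan is to route the identity through the primal slopes of the function $f$ attached to $F$ by \eqref{f}, using convexity to replace the Asplund-type (fuzzy) arguments by exact convex calculus. Since $\gph F$ is convex and locally closed near $(\bx,\by)$ and $\|\cdot-\by\|$ is convex, the function $f$ of \eqref{f} is convex, nonnegative (hence $f_+=f$) and lower semicontinuous near $(\bx,\by)$. First I would apply Theorem~\ref{7T1}(ii), whose hypotheses (completeness of $X,Y$ and local closedness of $\gph F$) are met, to obtain
\[
\sr[F](\bx,\by)=\overline{|\nabla{F}|}{}^{\diamond}(\bx,\by)=\overline{|\nabla{f}|}{}^{\diamond}(\bx,\by),
\]
the second equality holding by the very definition of the slope of $F$. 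On the dual side, convexity of $F$ lets me invoke Proposition~\ref{5P33}(ii) to get $\overline{|\sd{F}|}(\bx,\by)=\overline{|\sd{f}|}{}^{>}(\bx,\by)$. Thus everything reduces to the single convex identity $\overline{|\nabla{f}|}{}^{\diamond}(\bx,\by)=\overline{|\sd{f}|}{}^{>}(\bx,\by)$, a two-variable analogue of Proposition~\ref{nc}(vii).

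I would establish this by upgrading two general inequalities to equalities. For the primal one I claim the pointwise identity $|\nabla{f}|{}_{\rho}^{\diamond}(x,y)=|\nabla{f}|_{\rho}(x,y)$: because $f_+=f$, only points $(u,v)\in\gph F$ contribute to \eqref{nls-f}, and along every ray emanating from $(x,y)$ the quotient $[f(x,y)-f(u,v)]_+/d_\rho((u,v),(x,y))$ is nonincreasing by convexity, so the nonlocal supremum is already attained in the limit $(u,v)\to(x,y)$ and coincides with the local $\rho$-slope \eqref{ls-f}. Passing to \eqref{uss-f} and \eqref{ss-f}, this sharpens Proposition~\ref{pr2}(ii) to $\overline{|\nabla{f}|}{}^{\diamond}(\bx,\by)=\overline{|\nabla{f}|}{}^{>}(\bx,\by)$.

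For the subdifferential one, Theorem~\ref{th-f}(ii) already gives $\overline{|\nabla{f}|}{}^{>}(\bx,\by)\le\overline{|\sd{f}|}{}^{>}(\bx,\by)$, so only the reverse inequality remains, and here I would rerun the proof of Theorem~\ref{th-f}(iii) with the fuzzy sum rule (Lemma~\ref{l02}) replaced by the exact Moreau--Rockafellar convex sum rule, which is valid in any normed space since the summand $\ga'\|\cdot-(x,y)\|_{\rho'}$ is everywhere continuous. Concretely, fix $\ga>\overline{|\nabla{f}|}{}^{>}(\bx,\by)$, choose $\ga'\in(\overline{|\nabla{f}|}{}^{>}(\bx,\by),\ga)$ and a small $\rho>0$, and set $\rho'=\min\{1,\ga\iv\}\rho$. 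By \eqref{ss-f} and \eqref{ls-f} there is a point $(x,y)$ with $d(x,\bx)<\rho'$, $0<f(x,y)<\rho'$ and $|\nabla{f}|_{\rho'}(x,y)<\ga'$, so $(x,y)$ minimizes $(u,v)\mapsto f(u,v)+\ga'\|(u,v)-(x,y)\|_{\rho'}$ locally, hence globally by convexity. The convex sum rule together with Lemma~\ref{ll01} then produces $(x^*,y^*)\in\sd f(x,y)$ with $\|x^*\|+(\rho')\iv\|y^*\|\le\ga'$ in the dual norm \eqref{nrho*}; therefore $\|x^*\|\le\ga'<\ga$ and $\|y^*\|\le\ga'\rho'<\rho$, and the scaling of $\rho'$ keeps $(x,y)$ in the $\rho$-neighbourhood of \eqref{ssds-f}, so $|\sd{f}|_{\rho}(x,y)<\ga$. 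Letting $\rho\downarrow0$ and then $\ga\downarrow\overline{|\nabla{f}|}{}^{>}(\bx,\by)$ gives $\overline{|\sd{f}|}{}^{>}(\bx,\by)\le\overline{|\nabla{f}|}{}^{>}(\bx,\by)$ and closes the chain
\[
\sr[F](\bx,\by)=\overline{|\nabla{f}|}{}^{\diamond}(\bx,\by)=\overline{|\nabla{f}|}{}^{>}(\bx,\by)=\overline{|\sd{f}|}{}^{>}(\bx,\by)=\overline{|\sd{F}|}(\bx,\by).
\]

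The main obstacle I anticipate is this subdifferential step: one must confirm that the exact convex sum rule genuinely applies (the qualification condition is free here, and a local minimizer of a convex function is global), and then manage the two-parameter bookkeeping so that the multiplier obtained at the metric scale $\rho'$ still satisfies $\|y^*\|<\rho$ and sits over a base point inside the $\rho$-neighbourhood of \eqref{ssds-f}; the choice $\rho'=\min\{1,\ga\iv\}\rho$ is exactly what reconciles the two scales. By contrast, the primal identity is routine once the monotonicity of convex difference quotients is used.
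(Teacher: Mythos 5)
Your proposal is correct, and for the inequality $\sr[F](\bx,\by)\le\overline{|\sd{F}|}(\bx,\by)$ it takes a genuinely different route from the paper. The paper splits the statement the same way you do and disposes of $\sr[F](\bx,\by)\ge\overline{|\sd{F}|}(\bx,\by)$ with a one-line citation of Theorem~\ref{7T1} and Propositions~\ref{P2} and~\ref{5P33}; note that this chain still needs the Banach-space convex analogue of Theorem~\ref{th-f}(iii), i.e., $\overline{|\nabla{f}|}{}^{>}(\bx,\by)\ge\overline{|\sd{f}|}{}^{>}(\bx,\by)$, which is precisely what your Moreau--Rockafellar rerun supplies, so spelling it out is a genuine service rather than a detour. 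For the reverse inequality, however, the paper never touches the slopes of the function $f$ of \eqref{f}: given $0<\tau<\sr[F](\bx,\by)$ and $\ga\in(0,1)$, it takes any $(x,y)\in\gph F$ near $(\bx,\by)$ with $x\notin F\iv(\by)$, any $v^*\in J(y-\by)$ and $x^*\in D^*F(x,y)(v^*+\rho\B^*)$, picks $u\in F\iv(\by)$ with $\tau\|x-u\|<\|y-\by\|$, and uses the fact that the Fr\'echet normal cone to the convex set $\gph F$ coincides with the normal cone of convex analysis to obtain $\langle x^*,u-x\rangle\le-(1-\rho)\|y-\by\|$, hence $\|x^*\|>\ga\tau$; letting $\ga\to1$ and $\tau\to\sr[F](\bx,\by)$ finishes. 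You instead derive this half from the pointwise identity $|\nabla{f}|_{\rho}^{\diamond}(x,y)=|\nabla{f}|_{\rho}(x,y)$ for the convex $f$ (monotonicity of difference quotients along rays) combined with the general estimate of Theorem~\ref{th-f}(ii). Both arguments are sound; yours is more systematic, exhibiting the proposition as the two-variable convex analogue of Proposition~\ref{nc}(vii) and reusing the slope machinery end to end, while the paper's direct coderivative computation is self-contained, yields the uniform bound $\|x^*\|>\ga\tau$ for \emph{every} admissible coderivative element, and uses neither completeness of the spaces nor local closedness of the graph in that half (those hypotheses enter only through the Ekeland-based Theorem~\ref{7T1}(ii), on which both proofs rely for the opposite inequality).
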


\begin{proof}
Inequality $\sr[F]\ge
\overline{|\sd{F}|}(\bar{x},\by)$
follows from Theorem~\ref{7T1} and Propositions~\ref{P2} and \ref{5P33}.
Next we show that $\sr[F]\le
\overline{|\sd{F}|}(\bar{x},\by)$.
If $\sr[F](\bar{x},\by)=0$, the inequality is trivial.
Suppose $0<\tau<\sr[F](\bar{x},\by)$ and $0<\ga<1$.
Then, by \eqref{CMR}, there exists a $\rho\in(0,1-\ga)$ such that
\begin{gather}\label{iti1}
\tau d(x,F^{-1}(\by))<\|y-\by\|, \quad \forall x\in B_\rho(\bx)\setminus F\iv(\by),\; y\in F(x).
\end{gather}
Choose an arbitrary $(x,y)\in\gph F$ with $\|x-\bx\|<\rho$, $\|y-\by\|<\rho$, $x\notin F\iv(\by)$; $v^*\in J(y-\by)$; and $x^*\in D^*F(x,y)(v^*+\rho\B^*)$. By \eqref{iti1}, one can find a point $u\in F\iv(\by)$ such that
\begin{gather}\label{iti4}
\tau\|x-u\|<\|y-\by\|.
\end{gather}
By the convexity of $F$, the Fr\'echet normal cone to its graph coincides with the normal cone in the sense of convex analysis, and consequently it holds
\begin{gather*}
\langle x^*,u-x\rangle\le\langle v^*,\by-y\rangle +\rho\|y-\by\|=-(1-\rho)\|y-\by\|.
\end{gather*}
Combining this with \eqref{iti4}, we have
\begin{align*}
\|x^*\|\|u-x\| &\ge-\langle x^*,u-x\rangle
\ge(1-\rho)\|y-\by\|
>\ga\|y-\by\| >\ga\tau\|u-x\|.
\end{align*}
Hence,
$\|x^*\|>\ga\tau$,
and it follows from definitions \eqref{phi-sss} and \eqref{6srs} that $\overline{|\sd{F}|}(\bar{x},\by)>\ga\tau$.
Passing to the limit in the last inequality as $\ga\to1$ and $\tau\to\sr[F](\bar{x},\by)$, we arrive at the claimed inequality.
\end{proof}

The next corollary summarizes necessary and sufficient quantitative criteria for metric subregularity.

\begin{corollary}\label{7C1.1}
Let $\ga>0$.
Consider the following conditions:
\renewcommand {\theenumi} {\alph{enumi}}
\begin{enumerate}
\item
$F$ is metrically subregular at $(\bx,\by)$ with some $\tau>0$;
\item
$\overline{|\nabla{F}|}{}^{\diamond}(\bar{x},\by)>\ga$,\\ i.e., for some $\rho>0$ and any $(x,y)\in\gph F$ with $x\notin F\iv(\by)$, $d(x,\bx)<\rho$, and $d(y,\by)<\rho$, it holds $|\nabla{F}|_{\rho}^{\diamond}(x,y)>\ga$, and consequently there is a $(u,v)\in\gph F$ such that
\begin{gather*}\label{7ree}
d(y,\by)-d(v,\by)>\ga d_\rho((u,v),(x,y));
\end{gather*}
\item
$\ds\liminf_{\substack{x\to\bar{x},\;y\to\by\\
(x,y)\in\gph F,\,x\notin F\iv(\by)}}
\frac{d(y,\by)}{d(x,\bx)}>\ga$;
\item
$\overline{|\nabla{F}|}{}(\bar{x},\by)>\ga$,\\ i.e., for some $\rho>0$ and any $(x,y)\in\gph F$ with $x\notin F\iv(\by)$, $d(x,\bx)<\rho$, and $d(y,\by)<\rho$, it holds $|\nabla{F}|_{\rho}(x,y)>\ga$, and consequently, for any $\eps>0$, there is a $(u,v)\in\gph F$ with $d(u,x)<\eps$ and $d(v,y)<\eps$ such that
\begin{gather}\label{7phiree}
d(y,\by)-d(v,\by)>\ga d_\rho((u,v),(x,y));
\end{gather}
\item
$\ds\lim_{\rho\downarrow0}
\inf_{\substack{d(x,\bx)<\rho,\,d(y,\by)<\rho\\
(x,y)\in\gph F,\,x\notin F\iv(\by)}}\,
\max\left\{|\nabla{F}|_\rho(x,y), \frac{d(y,\by)}{d(x,\bx)}\right\}>\ga$,\\ i.e., for some $\rho>0$ and any $(x,y)\in\gph F$ with $x\notin F\iv(\by)$, $d(x,\bx)<\rho$, $d(y,\by)<\rho$, and $d(y,\by)/d(x,\bx)\le\ga$ it holds $|\nabla{F}|_{\rho}(x,y)>\ga$, and consequently, for any $\eps>0$, there is a $(u,v)\in\gph F$ with $d(u,x)<\eps$ and $d(v,y)<\eps$ such that \eqref{7phiree} holds true;
\item\label{35d}
$X$ and $Y$ are normed spaces and $\overline{|\sd{F}|}{}^a(\bar{x},\by)>\ga$,\\ i.e.,
for some $\rho>0$ and any $(x,y)\in\gph F$ with $x\notin F\iv(\by)$, $\|x-\bx\|<\rho$, and $\|y-\by\|<\rho$, it holds $|\sd{F}|^a_\rho(x,y)>\ga$, and consequently there exists an $\eps>0$ such that
\begin{gather}\label{7f}
\|x^*\|>\ga\;\;\mbox{for all }x^*\in D^*F(x,y)(J(B_\eps(y-\by))+\rho\B^*);
\end{gather}
\item
$X$ and $Y$ are normed spaces and\\ $\ds\lim_{\rho\downarrow0}
\inf_{\substack{\|x-\bx\|<\rho,\,\|y-\by\|<\rho\\
(x,y)\in\gph F,\,x\notin F\iv(\by)}}\,
\max\left\{|\sd{F}|{}^a_\rho(x,y), \frac{\|y-\by\|}{\|x-\bx\|}\right\}>\ga$,\\ i.e.,
for some $\rho>0$ and any $(x,y)\in\gph F$ with $x\notin F\iv(\by)$, $\|x-\bx\|<\rho$, $\|y-\by\|<\rho$, and $\|y-\by\|/\|x-\bx\|\le\ga$, it holds $|\sd{F}|^a_\rho(x,y)>\ga$, and consequently there exists an $\eps>0$ such that \eqref{7f} holds true;
\item
$X$ and $Y$ are normed spaces and
$\overline{|\sd{F}|}{}(\bar{x},\by)>\ga$,\\ i.e.,
for some $\rho>0$ and any $(x,y)\in\gph F$ with $x\notin F\iv(\by)$, $\|x-\bx\|<\rho$, and $\|y-\by\|<\rho$, it holds $|\sd{F}|_{\rho}(x,y)>\ga$,
and consequently
\begin{gather*}\label{7f2}
\|x^*\|>\ga\;\;\mbox{for all }x^*\in D^*F(x,y)(J(y-\by)+\rho\B^*).
\end{gather*}
\end{enumerate}
\renewcommand {\theenumi} {\roman{enumi}}
The following implications hold true:
\begin{enumerate}
\item
{\rm (c) \folgt (e)};
\item
{\rm (d) \folgt (e)};
\item
{\rm (e) \folgt (b)};
\item
{\rm \eqref{35d} \folgt (g)};
\item
{\rm \eqref{35d} \folgt (h)};
\item
if $\ga<\tau$, then {\rm (a) \folgt (b)}.
\cnta
\end{enumerate}
Suppose $X$ and $Y$ are complete, $\gph F$ is locally closed (in the product topology) near $(\bx,\by)$ and.
Then,
\begin{enumerate}
\cntb
\item
if $\tau\le\ga$, then {\rm (b) \folgt (a)};
\item
if $X$ and $Y$ are Asplund, then {\rm \eqref{35d}~\folgt (d)} and {\rm (g)~\folgt (e)};
\item
if $X$ and $Y$ are Banach and either the norm of $Y$ is Fr\'echet differentiable away from $0_Y$, or $F$ is convex, then {\rm (h)~\folgt (b)}.
\end{enumerate}
\renewcommand {\theenumi} {\roman{enumi}}
\end{corollary}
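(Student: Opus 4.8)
The plan is to reduce every assertion to the error-bound corollary already established for functions of two variables, applied to the function $f$ defined by \eqref{f}. First I would record that this $f$ is nonnegative, satisfies (P1) and (P2), has $S(f)=F^{-1}(\by)$ by \eqref{S(F)}, and that on $\gph F$ one has $f(x,y)=d(y,\by)$ (so that $d(y,\by)$ and, in the normed case, $\|y-\by\|$ coincide with $f$); consequently $\sr[F](\bx,\by)=\Er f(\bx,\by)$, and the nonlocal $\rho$-slope, uniform strict slope, $\rho$-slope and strict slope of $F$ are, by their very definitions, the corresponding primal slopes of $f$. Thus conditions (a)--(e) here are \emph{verbatim} conditions (a)--(e) of Corollary~\ref{3C1.1} for this particular $f$, and the purely primal implications (i), (ii), (iii), (vi), (vii) are immediate transcriptions of Corollary~\ref{3C1.1}(i), (ii), (iii), (iv), (vi); equivalently they may be re-derived on the spot from Theorem~\ref{7T1} and Proposition~\ref{P2} (for (vi) using $\sr[F](\bx,\by)\le\overline{|\nabla{F}|}{}^{\diamond}(\bx,\by)$, and for (vii) the equality in Theorem~\ref{7T1}(ii) under completeness and local closedness). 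Proposition~\ref{3P1} guarantees that none of this depends on the choice of admissible $\rho$-metric.

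The two purely subdifferential implications are short. For (iv), since $\max\{|\sd{F}|^a_\rho(x,y),\|y-\by\|/\|x-\bx\|\}\ge|\sd{F}|^a_\rho(x,y)$ pointwise, taking the infimum over the admissible region and the limit in $\rho$ shows that the quantity in (g) is bounded below by $\overline{|\sd{F}|}{}^a(\bx,\by)$, so (f)~\folgt (g). For (v), Proposition~\ref{5P31}(ii) gives $\overline{|\sd{F}|}{}^a(\bx,\by)\le\overline{|\sd{F}|}(\bx,\by)$, whence (f)~\folgt (h).

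For the Asplund block (viii), the implication (f)~\folgt (d) is immediate from Proposition~\ref{5P2}, which asserts $\overline{|\nabla{F}|}(\bx,\by)\ge\overline{|\sd{F}|}{}^a(\bx,\by)$ in the Asplund setting. For (g)~\folgt (e) I would route through the single-function corollary: the pointwise estimate of Proposition~\ref{5P32}(i) lets one pass from the $|\sd{F}|^a$-combined condition (g) to the corresponding $|\sd{f}|$-combined condition, namely condition (g) of Corollary~\ref{3C1.1} for this $f$, and the Asplund equivalence (e)~\iff (g) in Corollary~\ref{3C1.1}(vii) then delivers condition (e) of Corollary~\ref{3C1.1}, which is exactly (e) here. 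Finally, for (ix) I would split on the two hypotheses. In the convex case the clean route is Proposition~\ref{iti}, giving $\sr[F](\bx,\by)=\overline{|\sd{F}|}(\bx,\by)$, combined with Theorem~\ref{7T1}(ii), giving $\sr[F](\bx,\by)=\overline{|\nabla{F}|}{}^{\diamond}(\bx,\by)$; hence (h) and (b) are literally the same inequality. In the Fr\'echet-smooth-norm case I would instead invoke Proposition~\ref{5P33}(ii), which identifies $\overline{|\sd{F}|}(\bx,\by)$ with $\overline{|\sd{f}|}{}^{>}(\bx,\by)$, and then pass to (b) through the sufficiency half of the error-bound theory (Theorems~\ref{3T1} and \ref{th-f}), taking the sum rule in its exact differentiable form.

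I expect the genuine difficulty to be concentrated in (g)~\folgt (e) and in the smooth-norm part of (ix). The only cost-free pointwise comparison between primal and subdifferential slopes runs the \emph{wrong} way --- Theorem~\ref{th-f}(i) bounds $|\nabla{f}|_\rho$ from \emph{above} by a subdifferential slope --- so the sufficiency passage from subdifferential to primal information is intrinsically a limiting statement and must be carried out at the level of the strict slopes through an Ekeland-plus-sum-rule argument, all the while respecting the extra constraint $d(y,\by)/d(x,\bx)\le\ga$ that defines the combined conditions. Ensuring that this variational step can be executed with the exact (rather than fuzzy) sum rule in the non-Asplund smooth-norm setting is, to my mind, the most delicate point of the whole statement.
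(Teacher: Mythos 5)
Most of your reductions coincide with the derivation the paper intends (the paper itself gives no explicit proof of this corollary): (i), (ii) and (iv) are trivial max/infimum comparisons, (v) is Proposition~\ref{5P31}(ii), (vi) and (vii) follow from Theorem~\ref{7T1}, the first half of (viii) is precisely Proposition~\ref{5P2}, and your treatment of the convex alternative in (ix) via Proposition~\ref{iti} combined with Theorem~\ref{7T1} is exactly right. The flaw is in your starting premise that conditions (a)--(e) here are ``verbatim'' conditions (a)--(e) of Corollary~\ref{3C1.1} for $f$ of \eqref{f}. They are not: the $F$-conditions quantify over $\{(x,y)\in\gph F:\ x\notin F\iv(\by)\}$, while the $f$-conditions quantify over $\{(x,y)\in\gph F:\ y\ne\by\}$, a strictly larger set whenever some $x\in F\iv(\by)$ has $F(x)\ne\{\by\}$; hence every strict slope of $F$ majorizes the corresponding one of $f$, and each $F$-condition is \emph{weaker} than its $f$-counterpart. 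For the nonlocal slope the discrepancy is invisible (at a point with $x\in F\iv(\by)$, $y\ne\by$, the choice $(u,v)=(x,\by)$ in \eqref{nls-f} gives $|\nabla F|^{\diamond}_{\rho}(x,y)\ge 1/\rho$, so the two uniform strict slopes coincide), and the primal implications can indeed be re-derived on the spot in the $F$-setting, as your fallback via Theorem~\ref{7T1} and Proposition~\ref{P2} suggests. But your route for (g)~\folgt~(e) --- pass to condition (g) of Corollary~\ref{3C1.1} for $f$, then use its part (vii) --- fails at the first step. Concretely, take $X=Y=\R$, $F(x)=\{0,x^2\}$, $(\bx,\by)=(0,0)$: then $F\iv(0)=\R$, so condition (g) of the present corollary holds vacuously; yet $\sd f(x,x^2)=\{(2x(1-y^*),y^*):\ y^*\in\R\}$, so $|\sd f|_{\rho}(x,x^2)=2|x|(1-\rho)$ and $f(x,x^2)/|x|=|x|$ both tend to $0$ as $x\to0$, and condition (g) of Corollary~\ref{3C1.1} for $f$ fails. (This does not contradict the corollary, whose (e) also holds vacuously there; it kills only your intermediate arrow.) A correct argument must stay in the $F$-setting: rerun the Ekeland-plus-fuzzy-sum-rule proof of Theorem~\ref{th-f}(iii) and Proposition~\ref{5P32}(i) while preserving \emph{two} constraints on the point it constructs --- the ratio bound, which you did notice, and membership in $\{x\notin F\iv(\by)\}$, which you did not; the latter is secured by taking all perturbation radii smaller than $d(x,F\iv(\by))$, positive because $F\iv(\by)$ is locally closed.

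Separately, in the smooth-norm alternative of (ix) your plan to ``take the sum rule in its exact differentiable form'' does not close the argument. After Ekeland one must subdifferentiate $(u,v)\mapsto f(u,v)+\ga'\|(u,v)-(x,y)\|_{\rho'}$ at its local minimizer $(x,y)$: the term $\|v-\by\|$ does split off exactly by Fr\'echet smoothness of the norm of $Y$, but what remains is a nondifferentiable convex norm plus the nonconvex indicator of $\gph F$, and separating these two requires either the fuzzy sum rule --- hence $X$ Asplund, and smoothness of the norm of $Y$ gives no control over $X$ --- or convexity of $\gph F$. So your proposal actually establishes only the convex half of (ix); the smooth-norm half needs an argument not contained in it (nor supplied explicitly by the paper's stated results, which is worth flagging rather than papering over).
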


Criterion \eqref{35d} in the above proposition generalizes \cite[Proposition~2.2]{IofOut08}, cf. \cite[Theorem~3.1]{ZheNg10}, \cite[Theorem~4.5]{Hua12}, \cite[Theorem~5.1]{ZheNg12}, \cite[Theorem~4.1]{Ngh14},
\cite[Theorem~4.1]{ZheHe14}.

The next corollary presents a qualitative version of Corollary~\ref{7C1.1}.

\begin{corollary}\label{C1-0+}
Suppose $X$ and $Y$ are complete metric spaces and $\gph F$ is locally closed (in the product topology) near $(\bar{x},\by)$.
Then,
$F$ is metrically subregular at $(\bx,\by)$ provided that one of the following conditions holds true:
\renewcommand {\theenumi} {\alph{enumi}}
\begin{enumerate}
\item
$\overline{|\nabla{F}|}{}^{\diamond}(\bar{x},\by)>0$; \item
$\ds\liminf_{\substack{x\to\bar{x},\;y\to\by\\
(x,y)\in\gph F,\,x\notin F\iv(\by)}}
\frac{d(y,\by)}{d(x,\bx)}>0$;
\item
$\overline{|\nabla{F}|}{}(\bar{x},\by)>0$;
\item
$\ds\lim_{\rho\downarrow0}
\inf_{\substack{d(x,\bx)<\rho,\,d(y,\by)<\rho\\
(x,y)\in\gph F,\,x\notin F\iv(\by)}}\,
\max\left\{|\nabla{F}|_\rho(x,y), \frac{d(y,\by)}{d(x,\bx)}\right\}>0$; \item
$X$ and $Y$ are Asplund spaces and $\overline{|\sd{F}|}{}^{a}(\bar{x},\by)>0$;
\item
$X$ and $Y$ are Asplund spaces and \begin{gather}\label{31f}
\ds\lim_{\rho\downarrow0}
\inf_{\substack{\|x-\bx\|<\rho,\,\|y-\by\|<\rho\\
(x,y)\in\gph F,\,x\notin F\iv(\by)}}\,
\max\left\{|\sd{F}|{}^a_\rho(x,y), \frac{\|y-\by\|}{\|x-\bx\|}\right\}>0; \end{gather}
\item
$X$ and $Y$ are Banach spaces, either the norm of $Y$ is Fr\'echet differentiable away from $0_Y$ or $F$ is convex, and $\overline{|\sd{F}|}{}(\bar{x},\by)>0$.
\end{enumerate}
\renewcommand {\theenumi} {\roman{enumi}}
Moreover,
\begin{enumerate}
\item
condition {\rm (a)} is also necessary for the metric subregularity of $F$ at $(\bx,\by)$;
\item
{\rm (b) \folgt (d)};
\item
{\rm (c) \folgt (d)};
\item
{\rm (d) \folgt (a)};
\item
{\rm (e) \folgt (c)};
\item
{\rm (e) \folgt (f)};
\item
{\rm (f) \folgt (d)};
\item
{\rm (g) \folgt (c)}.
\end{enumerate}
\end{corollary}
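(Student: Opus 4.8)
The plan is to recognise this corollary as the qualitative (``$\gamma\downarrow0$'') repackaging of the quantitative Corollary~\ref{7C1.1}, reinforced at a single point by the collapse of the nonlocal slope onto the local one. The backbone is Theorem~\ref{7T1}(ii): under the standing completeness and local-closedness hypotheses it gives $\sr[F](\bx,\by)=\overline{|\nabla F|}{}^{\diamond}(\bx,\by)$, so that metric subregularity (i.e. $\sr[F](\bx,\by)>0$) is \emph{equivalent} to (a). This simultaneously proves the necessity assertion (i) and the sufficiency of (a). Each of the implications (ii)--(vii) is then the threshold-free specialisation of a matching implication of Corollary~\ref{7C1.1}: its (c) \folgt (e) gives (ii); its (d) \folgt (e) gives (iii); its (e) \folgt (b) gives (iv); its (f) \folgt (g) gives (vi); and, in the Asplund case, the two halves of its ``(f) \folgt (d) and (g) \folgt (e)'' give (v) and (vii) respectively. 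Sufficiency of (b)--(g) then follows by chaining each to (a): (b) \folgt (d) \folgt (a), (c) \folgt (d) \folgt (a), (e) \folgt (c) \folgt (d) \folgt (a), (f) \folgt (d) \folgt (a), and (g) \folgt (c) \folgt (d) \folgt (a).

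Although this delegates most of the work, it is worth recording the slope-level mechanism of the decisive steps. For (iv) one combines Proposition~\ref{P2}(i), $|\nabla F|_\rho(x,y)\le|\nabla F|_\rho^{\diamond}(x,y)$, with the elementary lower bound obtained by testing the nonlocal $\rho$-slope of the associated $f$ in \eqref{f} at the admissible point $(\bx,\by)\in\gph F$ (where the residual $d(\,\cdot\,,\by)$ vanishes), namely $|\nabla F|_\rho^{\diamond}(x,y)\ge d(y,\by)/\max\{d(x,\bx),\rho\,d(y,\by)\}=\min\{d(y,\by)/d(x,\bx),\rho^{-1}\}$. A split according to whether $d(y,\by)/d(x,\bx)\le\rho^{-1}$ gives $|\nabla F|_\rho^{\diamond}(x,y)\ge\min\{\max\{|\nabla F|_\rho(x,y),d(y,\by)/d(x,\bx)\},\rho^{-1}\}$; taking the infimum over the defining region and letting $\rho\downarrow0$ (so $\rho^{-1}\to\infty$) converts (d) into (a). Step (v) is immediate from Proposition~\ref{5P2}, $\overline{|\nabla F|}{}(\bx,\by)\ge\overline{|\sd F|}{}^{a}(\bx,\by)$, while (vii) transfers the hypothesis to $f$ through the slope comparison of Proposition~\ref{5P32} and then invokes the Asplund equivalence of the primal and subdifferential slopes, i.e. Theorem~\ref{th-f}(iii), whose proof rests on the fuzzy sum rule (Lemma~\ref{l02}).

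The genuinely new content is implication (viii), because Corollary~\ref{7C1.1}(ix), once specialised, only yields (g) \folgt (a), whereas (viii) asserts the stronger (g) \folgt (c) involving the smaller strict slope $\overline{|\nabla F|}{}(\bx,\by)$ (recall Proposition~\ref{P2}(ii)). One must therefore upgrade $\overline{|\nabla F|}{}^{\diamond}(\bx,\by)>0$ to $\overline{|\nabla F|}{}(\bx,\by)>0$ under the hypotheses of (g), and I would argue by cases. If $F$ is convex, then $f$ in \eqref{f} is convex, its difference quotient is monotone along segments, so the nonlocal and local strict slopes coincide, $\overline{|\nabla F|}{}(\bx,\by)=\overline{|\nabla F|}{}^{\diamond}(\bx,\by)$; this upgrades (a) to (c) and, via Theorem~\ref{7T1} and Proposition~\ref{iti}, even gives $\overline{|\nabla F|}{}(\bx,\by)=\overline{|\sd F|}{}(\bx,\by)$. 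If instead the norm of $Y$ is Fr\'echet differentiable away from $0_Y$, then Proposition~\ref{5P33}(ii) gives $\overline{|\sd F|}{}(\bx,\by)=\overline{|\sd f|}{}^{>}(\bx,\by)$, and the required reverse inequality $\overline{|\nabla f|}{}^{>}(\bx,\by)\ge\overline{|\sd f|}{}^{>}(\bx,\by)$ is obtained by re-running the argument of Theorem~\ref{th-f}(iii) with the \emph{exact} differentiable sum rule peeling off the smooth summand $\|\cdot-\by\|$ in place of the fuzzy one; then $\overline{|\nabla F|}{}(\bx,\by)\ge\overline{|\sd F|}{}(\bx,\by)>0$ is (c).

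I expect this reverse ``primal $\ge$ subdifferential'' inequality --- equivalently, the collapse of the nonlocal strict slope onto the local one --- to be the main obstacle, since it is the only place where substantive variational-analytic input (a sum rule) is unavoidable; everywhere else the argument is the identity of Theorem~\ref{7T1}, domination by a maximum, or a direct slope comparison. The points demanding care are: keeping the $\rho$-parametrisation and the asymmetric metric \eqref{drho} consistent when passing between $F$ and its $f$ of \eqref{f}; matching the regions of the various infimum/liminf definitions; and tracking precisely which space must carry the Asplund, smoothness, or convexity property so that the sum rule actually invoked --- fuzzy in (vii), exact differentiable or convex in (viii) --- is licensed on $X\times Y$.
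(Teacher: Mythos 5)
Your architecture coincides with the paper's own (implicit) one: Corollary~\ref{C1-0+} is stated in the paper without proof, precisely as the qualitative specialisation of Corollary~\ref{7C1.1}, and your derivation of necessity/sufficiency of (a) from Theorem~\ref{7T1} and of items (i)--(vii) from the matching implications of Corollary~\ref{7C1.1} (equivalently, from Propositions~\ref{P2}, \ref{5P2}, \ref{5P32}, \ref{5P33} and Theorem~\ref{th-f}) is exactly that route; your explicit estimate behind (iv), testing the nonlocal $\rho$-slope at $(\bx,\by)$ and splitting on whether $d(y,\by)/d(x,\bx)\le\rho\iv$, is also correct. You are likewise right that (viii) is \emph{not} a specialisation of Corollary~\ref{7C1.1}(ix), which only yields (g)~\folgt~(a), and your convex-branch argument --- convexity of $f$ from \eqref{f}, monotonicity of difference quotients along segments forcing $|\nabla{f}|_\rho=|\nabla{f}|_\rho^{\diamond}$ pointwise and hence $\overline{|\nabla{F}|}{}(\bar{x},\by)=\overline{|\nabla{F}|}{}^{\diamond}(\bar{x},\by)$, combined with Proposition~\ref{iti} and Theorem~\ref{7T1} --- correctly supplies a step the paper leaves unstated.

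The genuine gap is your smooth-norm branch of (viii). Re-running the proof of Theorem~\ref{th-f}(iii) with the exact differentiable sum rule does not go through: at the point $(x,y)$ of local minimum of $(u,v)\mapsto f(u,v)+\ga'\|(u,v)-(x,y)\|_{\rho'}$ you can indeed peel off the Fr\'echet-smooth summand $(u,v)\mapsto\|v-\by\|$, but you are then left with $0\in(0,J(y-\by))+\sd\bigl(\ga'\|\cdot-(x,y)\|_{\rho'}+\delta_{\gph F}\bigr)(x,y)$, and this residual sum still has to be split: the penalty term is a norm, hence \emph{not} differentiable at the very point $(x,y)$ where the subdifferential is taken, and $\gph F$ is not convex. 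Splitting it is exactly what the fuzzy sum rule (Lemma~\ref{l02}) is needed for, and that requires $X\times Y$ to be Asplund; Fr\'echet smoothness of the norm of $Y$ makes $Y$ Asplund but imposes nothing on $X$. Moreover, no repair is possible over a merely Banach $X$: take $X=\ell^1$, $Y=\R$, $F(x)=\{(1-\|x\|_1)_+^2\}$, $\by=0$, $\|\bx\|_1=1$. Since $\|\cdot\|_1$ is nowhere Fr\'echet differentiable, $\sd\bigl((1-\|\cdot\|_1)_+^2\bigr)(x)=\emptyset$ whenever $0<\|x\|_1<1$, so $D^*F(x,y)\bigl(J(y-\by)+\rho\B^*\bigr)=\emptyset$ at every point entering definition \eqref{phi-sss}; thus $\overline{|\sd{F}|}{}(\bar{x},0)=+\infty$ and (g) holds, yet $d(0,F(x))/d(x,F\iv(0))=1-\|x\|_1\to0$, so $F$ is not metrically subregular, and (c) fails too since $|\nabla{F}|_\rho(x,y)\le 2(1-\|x\|_1)\to0$. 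So in the smooth-norm branch the implication (viii) --- indeed even the sufficiency of (g) --- requires $X$ Asplund (in which case it follows from Proposition~\ref{5P33} together with Theorem~\ref{th-f}(iii)) or convexity of $F$. To be fair, this defect is inherited from the source: the paper offers no argument for this branch either, and its quantitative counterpart, Corollary~\ref{7C1.1}(ix), is refuted by the same example; but your proposed sum-rule substitution is not a proof of it, and the correct resolution is to recognise that the claim fails there rather than to argue for it.
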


Criterion (a) in the above corollary (in the more general H\"older setting) can be found in \cite[Proposition~3.4]{Kum09}, see also \cite[Theorem~1]{KlaKum06}.

A sufficient metric subregularity criterion similar to condition (f) was suggested recently by Gfrerer \cite{Gfr11}.
A key ingredient of this criterion is the following \emph{limit set} \cite[Definition 3.1]{Gfr11}:
\begin{align*}
{\rm Cr}_0 F(\bx,\by):=&\{(v,x^*)\in Y\times X^*\mid \exists (t_k)\downarrow0,\ (v_k,x_k^*)\to(v,x^*),\\ &(u_k,y_k^*)\subset\Sp_X\times\Sp_{Y^*} \mbox{ with } x_k^*\in D^*F(\bx+t_ku_k,\by+t_kv_k)(y_k^*)\}.
\end{align*}
The next theorem is the Asplund space part of \cite[Theorem 3.2]{Gfr11}:

\begin{theorem}\label{Gfr}
Suppose $X$ and $Y$ are Asplund and $\gph F$ is locally closed. If $(0,0)\notin{\rm Cr}_0 F(\bx,\by)$, then $F$ is metrically subregular at $(\bx,\by)$.
\end{theorem}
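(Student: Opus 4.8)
The plan is to derive the conclusion from the sufficient metric subregularity criterion in Corollary~\ref{C1-0+}(f). Since $X$ and $Y$ are Asplund, they are complete, and $\gph F$ is locally closed by hypothesis, so all standing assumptions of that corollary (and the Asplund requirement in its condition (f)) are in force. It therefore suffices to show that $(0,0)\notin{\rm Cr}_0 F(\bx,\by)$ implies condition \eqref{31f}, namely
\[
\lim_{\rho\downarrow0}\inf_{\substack{\|x-\bx\|<\rho,\,\|y-\by\|<\rho\\(x,y)\in\gph F,\,x\notin F\iv(\by)}}\max\left\{|\sd{F}|^a_\rho(x,y),\frac{\|y-\by\|}{\|x-\bx\|}\right\}>0.
\]
I would argue by contraposition: assuming \eqref{31f} fails, I will construct the sequences witnessing $(0,0)\in{\rm Cr}_0 F(\bx,\by)$, contradicting the hypothesis.

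First I would unwind the failure of \eqref{31f}. Because the constraint set grows and the approximate subdifferential $\rho$-slope \eqref{6asrs} decreases as $\rho$ increases, the quantity under $\lim_{\rho\downarrow0}$ is monotone, so the limit is a supremum and its vanishing forces the inner infimum to equal $0$ for every $\rho>0$. Hence, choosing $\rho_k\downarrow0$, I obtain points $(x_k,y_k)\in\gph F$ with $x_k\notin F\iv(\by)$, $\|x_k-\bx\|<\rho_k$, $\|y_k-\by\|<\rho_k$, and both
\[
\frac{\|y_k-\by\|}{\|x_k-\bx\|}\to0\qquad\mbox{and}\qquad|\sd{F}|^a_{\rho_k}(x_k,y_k)\to0.
\]
Note that $x_k\ne\bx$ (since $\bx\in F\iv(\by)$ while $x_k\notin F\iv(\by)$) and $y_k\ne\by$, so all normalizations below are legitimate.

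Next I would unpack $|\sd{F}|^a_{\rho_k}(x_k,y_k)$ via the nested $\liminf_{v\to y_k-\by}$ and $\inf$ in \eqref{6asrs}: its smallness yields a direction $\tilde v_k\to(y_k-\by)$ and an element $x_k^*\in D^*F(x_k,y_k)(J(\tilde v_k)+\rho_k\B^*)$ with $\|x_k^*\|\to0$. Thus there is $y_k^*=\xi_k+\rho_k\zeta_k$ with $\xi_k\in J(\tilde v_k)\subset\Sp_{Y^*}$ and $\zeta_k\in\B^*$ such that $(x_k^*,-y_k^*)\in N_{\gph F}(x_k,y_k)$. Then $1-\rho_k\le\|y_k^*\|\le1+\rho_k$, so setting $\hat y_k^*:=y_k^*/\|y_k^*\|\in\Sp_{Y^*}$ and using positive homogeneity of the Fr\'echet coderivative in its argument (the normal cone being a cone) gives $\hat x_k^*:=x_k^*/\|y_k^*\|\in D^*F(x_k,y_k)(\hat y_k^*)$ with $\|\hat x_k^*\|\le\|x_k^*\|/(1-\rho_k)\to0$.

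Finally I would rescale to match the definition of ${\rm Cr}_0 F(\bx,\by)$. Put $t_k:=\|x_k-\bx\|$ (passing to a subsequence so that $t_k\downarrow0$), $u_k:=(x_k-\bx)/t_k\in\Sp_X$, and $v_k:=(y_k-\by)/t_k$, so that $x_k=\bx+t_ku_k$, $y_k=\by+t_kv_k$, while $\|v_k\|=\|y_k-\by\|/\|x_k-\bx\|\to0$ gives $v_k\to0$. Collecting the above, $\hat x_k^*\in D^*F(\bx+t_ku_k,\by+t_kv_k)(\hat y_k^*)$ with $(u_k,\hat y_k^*)\subset\Sp_X\times\Sp_{Y^*}$, $t_k\downarrow0$, and $(v_k,\hat x_k^*)\to(0,0)$, which is exactly the assertion $(0,0)\in{\rm Cr}_0 F(\bx,\by)$. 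This contradiction establishes \eqref{31f}, and Corollary~\ref{C1-0+}(f) delivers metric subregularity. The main obstacle I expect is the bookkeeping in the third step: reading the witnessing data off the two nested infima in \eqref{6asrs} and reconciling the $\rho_k\B^*$ perturbation and the duality-map normalization with the requirement $y_k^*\in\Sp_{Y^*}$. I also want to stress that the ratio term $\|y-\by\|/\|x-\bx\|$ in criterion (f) is indispensable: it is precisely what forces $v_k\to0$ and hence places the cluster point at $(0,0)$ rather than at some $(v,0)$ with $v\ne0$; the plain slope condition (e) would not suffice.
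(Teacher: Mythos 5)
Your proposal is correct and follows essentially the same route as the paper: the paper derives the theorem from Corollary~\ref{C1-0+}(f) via Proposition~\ref{Gfr1}, whose proof is exactly your contrapositive construction --- pick $(x_k,y_k)\in\gph F$ with $\|y_k-\by\|/\|x_k-\bx\|<1/k$ and $|\sd{F}|^a_{1/k}(x_k,y_k)<1/k$, extract $u_k^*\in D^*F(x_k,y_k)(v_k^*)$ with $\|v_k^*\|>1-1/k$, normalize by $\|v_k^*\|$ using the conic structure of $N_{\gph F}$, and rescale by $t_k=\|x_k-\bx\|$ to exhibit $(0,0)\in{\rm Cr}_0 F(\bx,\by)$. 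Your closing observation about the indispensability of the ratio term (which forces $v_k\to 0$) also matches the paper's own remark contrasting \eqref{31f} with the condition $(0,0)\notin{\rm Cr}_0 F(\bx,\by)$.
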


This theorem is a consequence of Corollary~\ref{C1-0+} thanks to the next fact.

\begin{proposition}\label{Gfr1}
If $(0,0)\notin{\rm Cr}_0 F(\bx,\by)$, then condition \eqref{31f} holds true. \end{proposition}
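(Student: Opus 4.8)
The plan is to prove the contrapositive: assuming that \eqref{31f} fails, I would build sequences exhibiting $(0,0)\in{\rm Cr}_0 F(\bx,\by)$, contradicting the hypothesis.

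First I would record that every quantity in \eqref{31f} is nonnegative, so that a failure means the limit equals $0$. If the feasible set $\{(x,y)\in\gph F\mid \|x-\bx\|<\rho,\ \|y-\by\|<\rho,\ x\notin F\iv(\by)\}$ were empty for some small $\rho$, the infimum would be $+\infty$ and \eqref{31f} would hold trivially; hence I may assume these sets are nonempty. Taking $\rho=1/n$, I would then choose feasible pairs $(x_n,y_n)\in\gph F$ with $\|x_n-\bx\|<1/n$, $\|y_n-\by\|<1/n$, $x_n\notin F\iv(\by)$ and
\[\max\Big\{|\sd{F}|^a_{1/n}(x_n,y_n),\ \tfrac{\|y_n-\by\|}{\|x_n-\bx\|}\Big\}<\tfrac1n.\]

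Since $(\bx,\by)\in\gph F$ gives $\bx\in F\iv(\by)$ while $x_n\notin F\iv(\by)$, we have $x_n\ne\bx$; thus $t_n:=\|x_n-\bx\|>0$, $t_n\to0$, and I can write $x_n=\bx+t_nu_n$ with $u_n\in\Sp_X$ and $y_n=\by+t_n\hat v_n$, $\hat v_n:=(y_n-\by)/t_n$, where the second entry of the maximum forces $\|\hat v_n\|<1/n\to0$. Next, unwinding the $\liminf/\inf$ in definition \eqref{6asrs} of $|\sd{F}|^a_\rho$, from $|\sd{F}|^a_{1/n}(x_n,y_n)<1/n$ I would extract $w_n$ near $y_n-\by$ and $x_n^*\in D^*F(x_n,y_n)(J(w_n)+\tfrac1n\B^*)$ with $\|x_n^*\|<1/n$. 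Writing the dual argument as $\eta_n=\omega_n+\tfrac1n b_n$ with $\omega_n\in J(w_n)\subset\Sp_{Y^*}$ and $\|b_n\|\le1$ yields $\|\eta_n\|\in[1-\tfrac1n,1+\tfrac1n]$.

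The step I expect to be the only delicate one is matching the normalization demanded by ${\rm Cr}_0 F$, whose dual multipliers must lie on $\Sp_{Y^*}$. I would resolve it using the positive homogeneity of the Fr\'echet coderivative in its second argument (immediate from $N_{\gph F}$ being a cone): setting $c_n:=\|\eta_n\|\to1$, $y_n^*:=\eta_n/c_n\in\Sp_{Y^*}$ and $\hat x_n^*:=x_n^*/c_n$ gives $\hat x_n^*\in D^*F(\bx+t_nu_n,\by+t_n\hat v_n)(y_n^*)$ with $\|\hat x_n^*\|\le(1/n)/(1-1/n)\to0$. Passing to a subsequence so that $t_n\downarrow0$, the data $t_n\downarrow0$, $(\hat v_n,\hat x_n^*)\to(0,0)$, $(u_n,y_n^*)\in\Sp_X\times\Sp_{Y^*}$ with $\hat x_n^*\in D^*F(\bx+t_nu_n,\by+t_n\hat v_n)(y_n^*)$ are precisely a witness that $(0,0)\in{\rm Cr}_0 F(\bx,\by)$, giving the desired contradiction and hence establishing \eqref{31f}.
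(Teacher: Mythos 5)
Your proof is correct and follows essentially the same route as the paper's: the contrapositive, the extraction of points $(x_n,y_n)$ from the failure of \eqref{31f}, the unwinding of definition \eqref{6asrs}, and the normalization of the dual element via positive homogeneity of the coderivative match the published argument step for step. The only (welcome) refinements are your explicit treatment of the empty-feasible-set case and the subsequence extraction to ensure $t_n\downarrow0$, both of which the paper leaves implicit.
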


\begin{proof}
Let condition \eqref{31f} fail.
Then, for any $k=1,2,\ldots$, there exists a point $(x_k,y_k)\in\gph F$ such that $x_k\notin F\iv(\by)$, $\|x_k-\bx\|<1/k$,
$\|y_k-\by\|<1/k$,
$\|y_k-\by\|/\|x_k-\bx\|<1/k$, and $|\sd{F}|{}^a_{1/k}(x_k,y_k)<1/k$.
By definition \eqref{6asrs}, there exist elements $v_k^*\in Y^*$ with $\|v_k^*\|>1-1/k$ and $u_k^*\in D^*F(x_k,y_k)(v_k^*)$ with $\|u_k^*\|<1/k$.
Denote $t_k:=\|x_k-\bx\|$, $u_k:=t_k\iv(x_k-\bx)$,
$v_k:=t_k\iv(y_k-\by)$, $y_k^*=v_k^*/\|v_k^*\|$, and
$x_k^*=u_k^*/\|v_k^*\|$.
Obviously, $0<t_k<1/k$, $\|u_k\|=1$, $\|v_k\|<1/k$, $u_k=\bx+t_ku_k$,
$v_k=\by+t_kv_k$,
$\|y_k^*\|=1$, $\|x_k^*\|<(1/k)/(1-1/k) =1/(k-1)$ and $x_k^*\in D^*F(\bx+t_ku_k,\by+t_kv_k)(y_k^*)$.
It holds $t_k\downarrow0$, $v_k\to0$, and $x_k^*\to0$.
Hence, $(0,0)\in{\rm Cr}_0 F(\bx,\by)$.
\end{proof}

The main difference between conditions \eqref{31f} and
$(0,0)\notin{\rm Cr}_0 F(\bx,\by)$, which makes the first one weaker, is the requirement in definition \eqref{6asrs} that $y^*$ component of the pair $(x^*,y^*)\in\gph D^*F(x,y)$ is related to $y-\by$: $y^*\in J(v)+\rho\B^*$ where $v\to y-\by$. (The only requirement in the definition of ${\rm Cr}_0 F(\bx,\by)$ is $y_k^*\in\Sp_{Y^*}$.)
This issue seems to have been taken into account in the most recent publication by Gfrerer \cite[Corollary 1]{Gfr14}.

\begin{remark}
It is easy to see from the proof of Proposition~\ref{Gfr1} that its conclusion remains true if condition $(0,0)\notin{\rm Cr}_0 F(\bx,\by)$ is replaced by a weaker one involving \emph{outer limit set} ${\rm Cr}_0^> F(\bx,\by)$ \cite[p.~1450]{Gfr11}, \cite[p.~156]{Gfr13}.
\end{remark}

\section*{Acknowledgements}
The author is very happy to submit his paper to the special issue dedicated to the 40th Anniversary of the journal where his first paper in English was published in 1988.
The author is grateful to the then editor-in-chief, Professor Karl-Heinz Elster for his support and keeps in his archive a postcard signed by Professor Elster informing the author about the acceptance of that paper.

\section*{Funding}
This work was supported by the Australian Research Council, grant DP110102011.

\bibliographystyle{gOPT}
\bibliography{buch-kr,kruger,kr-tmp}

\def\cprime{$'$} \def\cftil#1{\ifmmode\setbox7\hbox{$\accent"5E#1$}\else
  \setbox7\hbox{\accent"5E#1}\penalty 10000\relax\fi\raise 1\ht7
  \hbox{\lower1.15ex\hbox to 1\wd7{\hss\accent"7E\hss}}\penalty 10000
  \hskip-1\wd7\penalty 10000\box7} \def\cprime{$'$} \def\cprime{$'$}
  \def\cprime{$'$} \def\Dbar{\leavevmode\lower.6ex\hbox to 0pt{\hskip-.23ex
  \accent"16\hss}D} \def\cfac#1{\ifmmode\setbox7\hbox{$\accent"5E#1$}\else
  \setbox7\hbox{\accent"5E#1}\penalty 10000\relax\fi\raise 1\ht7
  \hbox{\lower1.15ex\hbox to 1\wd7{\hss\accent"13\hss}}\penalty 10000
  \hskip-1\wd7\penalty 10000\box7} \def\cprime{$'$}
\begin{thebibliography}{10}
\providecommand{\url}[1]{\normalfont{#1}}
\providecommand{\urlprefix}{Available from: }

\bibitem{Aze03}
Az{\'e}~D. A survey on error bounds for lower semicontinuous functions. In:
  Proceedings of 2003 {MODE}-{SMAI} {C}onference; ESAIM Proc.; Vol.~13. EDP
  Sci., Les Ulis; 2003. p. 1--17.

\bibitem{AzeCor04}
Az{\'e}~D, Corvellec~JN. Characterizations of error bounds for lower
  semicontinuous functions on metric spaces. ESAIM Control Optim Calc Var.
  2004;\hspace{0pt}10(3):409--425.

\bibitem{CorMot08}
Corvellec~JN, Motreanu~VV. Nonlinear error bounds for lower semicontinuous
  functions on metric spaces. Math Program, Ser A.
  2008;\hspace{0pt}114(2):291--319.

\bibitem{Gfr11}
Gfrerer~H. First order and second order characterizations of metric
  subregularity and calmness of constraint set mappings. SIAM J Optim.
  2011;\hspace{0pt}21(4):1439--1474.

\bibitem{Iof00_}
Ioffe~AD. Metric regularity and subdifferential calculus. Russian Math Surveys.
  2000;\hspace{0pt}55:501--558.

\bibitem{IofOut08}
Ioffe~AD, Outrata~JV. On metric and calmness qualification conditions in
  subdifferential calculus. Set-Valued Anal. 2008;\hspace{0pt}16(2-3):199--227.

\bibitem{NgaiThe04}
Ngai~HV, Th{\'e}ra~M. Error bounds and implicit multifunction theorem in smooth
  {B}anach spaces and applications to optimization. Set-Valued Anal.
  2004;\hspace{0pt}12(1-2):195--223.

\bibitem{NgaiThe08}
Ngai~HV, Th{\'e}ra~M. Error bounds in metric spaces and application to the
  perturbation stability of metric regularity. SIAM J Optim.
  2008;\hspace{0pt}19(1):1--20.

\bibitem{Pang97}
Pang~JS. Error bounds in mathematical programming. Math Programming, Ser B.
  1997;\hspace{0pt}79(1-3):299--332.

\bibitem{ZheNg10}
Zheng~XY, Ng~KF. Metric subregularity and calmness for nonconvex generalized
  equations in {B}anach spaces. SIAM J Optim.
  2010;\hspace{0pt}20(5):2119--2136.

\bibitem{ZheNg12}
Zheng~XY, Ng~KF. Metric subregularity for proximal generalized equations in
  {H}ilbert spaces. Nonlinear Anal. 2012;\hspace{0pt}75(3):1686--1699.

\bibitem{FabHenKruOut10}
Fabian~MJ, Henrion~R, Kruger~AY, Outrata~JV. Error bounds: necessary and
  sufficient conditions. Set-Valued Var Anal. 2010;\hspace{0pt}18(2):121--149.

\bibitem{Iof79}
Ioffe~AD. Regular points of {L}ipschitz functions. Trans Amer Math Soc.
  1979;\hspace{0pt}251:61--69.

\bibitem{Mor06.1}
Mordukhovich~BS. Variational analysis and generalized differentiation. {I}:
  {B}asic {T}heory. Vol. 330 of Grundlehren der Mathematischen Wissenschaften
  [Fundamental Principles of Mathematical Sciences]. Berlin: Springer; 2006.

\bibitem{RocWet98}
Rockafellar~RT, Wets~RJB. Variational analysis. Berlin: Springer; 1998.

\bibitem{DonRoc09}
Dontchev~AL, Rockafellar~RT. Implicit functions and solution mappings. a view
  from variational analysis. Springer Monographs in Mathematics; Dordrecht:
  Springer; 2009.

\bibitem{Pen13}
Penot~JP. Calculus without derivatives. New York: Springer; 2013.

\bibitem{Iof03}
Ioffe~AD. On robustness of the regularity property of maps. Control Cybernet.
  2003;\hspace{0pt}32:543--554.

\bibitem{Aze06}
Az{\'e}~D. A unified theory for metric regularity of multifunctions. J Convex
  Anal. 2006;\hspace{0pt}13(2):225--252.

\bibitem{Iof07}
Ioffe~AD. On regularity estimates for mappings between embedded manifolds.
  Control Cybernet. 2007;\hspace{0pt}36(3):659--668.

\bibitem{AzeBeh08}
Az{\'e}~D, Benahmed~S. On implicit multifunction theorems. Set-Valued Anal.
  2008;\hspace{0pt}16(2-3):129--155.

\bibitem{AzeCor09}
Az{\'e}~D, Corvellec~JN. On some variational properties of metric spaces. J
  Fixed Point Theory Appl. 2009;\hspace{0pt}5(1):185--200.

\bibitem{Iof11}
Ioffe~AD. Regularity on a fixed set. SIAM J Optim.
  2011;\hspace{0pt}21(4):1345--1370.

\bibitem{Iof13}
Ioffe~AD. Nonlinear regularity models. Math Program.
  2013;\hspace{0pt}139(1-2):223--242.

\bibitem{AzeCor02}
Az{\'e}~D, Corvellec~JN. On the sensitivity analysis of {H}offman constants for
  systems of linear inequalities. SIAM J Optim.
  2002;\hspace{0pt}12(4):913--927.

\bibitem{FabHenKruOut12}
Fabian~MJ, Henrion~R, Kruger~AY, Outrata~JV. About error bounds in metric
  spaces. In: Klatte~D, L\"uthi~HJ, Schmedders~K, editors. Operations research
  proceedings 2011. selected papers of the int. conf. operations research (or
  2011), august 30 -- september 2, 2011, zurich, switzerland. Berlin:
  Springer-Verlag; 2012. p. 33--38.

\bibitem{BedKru12}
Bednarczuk~EM, Kruger~AY. Error bounds for vector-valued functions: necessary
  and sufficient conditions. Nonlinear Anal. 2012;\hspace{0pt}75(3):1124--1140.

\bibitem{BedKru12.2}
Bednarczuk~EM, Kruger~AY. Error bounds for vector-valued functions on metric
  spaces. Vietnam J Math. 2012;\hspace{0pt}40(2-3):165--180.

\bibitem{NgaKruThe12}
Ngai~HV, Kruger~AY, Th{\'e}ra~M. Slopes of multifunctions and extensions of
  metric regularity. Vietnam J Math. 2012;\hspace{0pt}40(2-3):355--369.

\bibitem{AmbGigSav08}
Ambrosio~L, Gigli~N, Savar{\'e}~G. Gradient flows in metric spaces and in the
  space of probability measures. 2nd ed. Lectures in Mathematics ETH Z\"urich;
  Basel: Birkh\"auser; 2008.

\bibitem{DurTroStr13}
Durea~M, Nguyen~HT, Strugariu~R. Metric regularity of epigraphical multivalued
  mappings and applications to vector optimization. Math Program, Ser B.
  2013;\hspace{0pt}139(1-2):139--159.

\bibitem{ApeDurStr13}
Apetrii~M, Durea~M, Strugariu~R. On subregularity properties of set-valued
  mappings. Set-Valued Var Anal. 2013;\hspace{0pt}21(1):93--126.

\bibitem{Iof07.2}
Ioffe~AD. A {S}ard theorem for tame set-valued mappings. J Math Anal Appl.
  2007;\hspace{0pt}335(2):882--901.

\bibitem{Iof08.2}
Ioffe~AD. An invitation to tame optimization. SIAM J Optim.
  2008;\hspace{0pt}19(4):1894--1917.

\bibitem{Iof13.2}
Ioffe~AD. Convexity and variational analysis. In: Computational and analytical
  mathematics. Vol.~50 of Springer Proc. Math. Stat.; Springer, New York; 2013.
  p. 411--444.

\bibitem{Pen10}
Penot~JP. Error bounds, calmness and their applications in nonsmooth analysis.
  In: Leizarowitz~A, Mordukhovich~BS, Shafrir~I, Zaslavski~AJ, editors.
  Nonlinear analysis and optimization ii: Optimization. Vol. 514 of Contemp.
  Math.; Providence, RI: AMS; 2010. p. 225--248.

\bibitem{NgaiTronThe13}
Ngai~HV, Tron~NH, Th{\'e}ra~M. Implicit multifunction theorems in complete
  metric spaces. Math Program, Ser B. 2013;\hspace{0pt}139:301--326.

\bibitem{Ude05}
Uderzo~A. Fr\'echet quasidifferential calculus with applications to metric
  regularity of continuous maps. Optimization.
  2005;\hspace{0pt}54(4-5):469--494.

\bibitem{Ude07}
Uderzo~A. Convex difference criteria for the quantitative stability of
  parametric quasidifferentiable systems. Set-Valued Anal.
  2007;\hspace{0pt}15(1):81--104.

\bibitem{Ude10.2}
Uderzo~A. Exact penalty functions and calmness for mathematical programming
  under nonlinear perturbations. Nonlinear Anal.
  2010;\hspace{0pt}73(6):1596--1609.

\bibitem{MenYan12}
Meng~KW, Yang~XQ. Equivalent conditions for local error bounds. Set-Valued Var
  Anal. 2012;\hspace{0pt}20(4):617--636.

\bibitem{DMT}
De~Giorgi~E, Marino~A, Tosques~M. Problems of evolution in metric spaces and
  maximal decreasing curve. Atti Accad Naz Lincei Rend Cl Sci Fis Mat Natur
  (8). 1980;\hspace{0pt}68(3):180--187; in Italian.

\bibitem{HenJouOut02}
Henrion~R, Jourani~A, Outrata~J. On the calmness of a class of multifunctions.
  SIAM J Optim. 2002;\hspace{0pt}13(2):603--618.

\bibitem{HenOut01}
Henrion~R, Outrata~JV. A subdifferential condition for calmness of
  multifunctions. J Math Anal Appl. 2001;\hspace{0pt}258(1):110--130.

\bibitem{HenOut05}
Henrion~R, Outrata~JV. Calmness of constraint systems with applications. Math
  Program, Ser B. 2005;\hspace{0pt}104(2-3):437--464.

\bibitem{KlaKum02}
Klatte~D, Kummer~B. Nonsmooth equations in optimization. regularity, calculus,
  methods and applications. Vol.~60 of Nonconvex Optimization and its
  Applications. Dordrecht: Kluwer Academic Publishers; 2002.

\bibitem{KlaKum06}
Klatte~D, Kummer~B. Stability of inclusions: characterizations via suitable
  {L}ipschitz functions and algorithms. Optimization.
  2006;\hspace{0pt}55(5-6):627--660.

\bibitem{BosJouHen04}
Bosch~P, Jourani~A, Henrion~R. Sufficient conditions for error bounds and
  applications. Appl Math Optim. 2004;\hspace{0pt}50(2):161--181.

\bibitem{Gfr13}
Gfrerer~H. On directional metric regularity, subregularity and optimality
  conditions for nonsmooth mathematical programs. Set-Valued Var Anal.
  2013;\hspace{0pt}21(2):151--176.

\bibitem{Gfr14}
Gfrerer~H. On metric pseudo-(sub)regularity of multifunctions and optimality
  conditions for degenerated mathematical programs. Set-Valued Var Anal.
  2014;\hspace{0pt}22(1):79--115.

\bibitem{GobLop14}
Goberna~MA, L\'opez~MA. Post-optimal analysis in linear semi-infinite
  optimization. SpringerBriefs in Optimization; New York: Springer; 2014.

\bibitem{Eke74}
Ekeland~I. On the variational principle. J Math Anal Appl.
  1974;\hspace{0pt}47:324--353.

\bibitem{Kru03.1}
Kruger~AY. On {F}r\'echet subdifferentials. J Math Sci.
  2003;\hspace{0pt}116(3):3325--3358.

\bibitem{Fab89}
Fabian~M. Subdifferentiability and trustworthiness in the light of a new
  variational principle of {B}orwein and {P}reiss. Acta Univ Carolinae.
  1989;\hspace{0pt}30:51--56.

\bibitem{Lucc06}
Lucchetti~R. Convexity and well-posed problems. CMS Books in
  Mathematics/Ouvrages de Math\'ematiques de la SMC, 22; New York: Springer;
  2006.

\bibitem{Zal02}
Z{\u{a}}linescu~C. Convex analysis in general vector spaces. River Edge, NJ:
  World Scientific Publishing Co. Inc.; 2002.

\bibitem{Dem10}
Demyanov~VF. Nonsmooth optimization. In: Nonlinear optimization. Vol. 1989 of
  Lecture Notes in Math.; Berlin: Springer; 2010. p. 55--163.

\bibitem{NgaiThe}
Ngai~HV, Th{\'e}ra~M. Directional metric regularity of multifunctions. arXiv:.
  2013;\hspace{0pt}1304.7748:1--27.

\bibitem{Sim91}
Simons~S. The least slope of a convex function and the maximal monotonicity of
  its subdifferential. J Optim Theory Appl. 1991;\hspace{0pt}71(1):127--136.

\bibitem{Ude2}
Uderzo~A. On {L}ipschitz semicontinuity properties of variational systems with
  application to parametric optimization. J Optim Theory Appl.
  2013;\hspace{0pt}:1--32DOI: 0.1007/s10957-013-0455-9.

\bibitem{Ude14}
Uderzo~A. On a quantitative semicontinuity property of variational systems with
  applications to perturbed quasidifferentiable optimization. In: Demyanov~VF,
  Pardalos~PM, Batsyn~M, editors. Constructive nonsmooth analysis and related
  topics. Vol.~87 of Springer Optimization and Its Applications; Springer New
  York; 2014. p. 115--136.

\bibitem{DruIofLew}
Drusvyatskiy~D, Ioffe~AD, Lewis~AS. Directional metric regularity of
  multifunctions. arXiv:. 2014;\hspace{0pt}1401.7569:1--17.

\bibitem{Mor06.2}
Mordukhovich~BS. Variational analysis and generalized differentiation. {II}:
  {{A}pplications}. Vol. 331 of Grundlehren der Mathematischen Wissenschaften
  [Fundamental Principles of Mathematical Sciences]. Berlin: Springer; 2006.

\bibitem{RosSegSte11}
Rossi~R, Segatti~A, Stefanelli~U. Global attractors for gradient flows in
  metric spaces. J Math Pures Appl. 2011;\hspace{0pt}95(2):205--244.

\bibitem{Li10}
Li~G. On the asymptotically well behaved functions and global error bound for
  convex polynomials. SIAM J Optim. 2010;\hspace{0pt}20(4):1923--1943.

\bibitem{ChaChe14}
Chao~Mt, Cheng~Cz. Linear and nonlinear error bounds for lower semicontinuous
  functions. Optimization Letters. 2014;\hspace{0pt}8(4):1301--1312.

\bibitem{Ham94}
Hamel~A. Remarks to an equivalent formulation of {E}keland's variational
  principle. Optimization. 1994;\hspace{0pt}31(3):233--238.

\bibitem{WuYe02}
Wu~Z, Ye~JJ. On error bounds for lower semicontinuous functions. Math Program,
  Ser A. 2002;\hspace{0pt}92(2):301--314.

\bibitem{Wu03}
Wu~Z. Equivalent formulations of {E}keland's variational principle. Nonlinear
  Anal. 2003;\hspace{0pt}55(5):609--615.

\bibitem{KlaKruKum12}
Klatte~D, Kruger~AY, Kummer~B. From convergence principles to stability and
  optimality conditions. J Convex Anal. 2012;\hspace{0pt}19(4):1043--1072.

\bibitem{Ye98}
Ye~JJ. New uniform parametric error bounds. J Optim Theory Appl.
  1998;\hspace{0pt}98(1):197--219.

\bibitem{LedZhu99}
Ledyaev~YS, Zhu~QJ. Implicit multifunction theorems. Set-Valued Anal.
  1999;\hspace{0pt}7(3):209--238.

\bibitem{Jou00}
Jourani~A. Hoffman's error bound, local controllability, and sensitivity
  analysis. SIAM J Control Optim. 2000;\hspace{0pt}38(3):947--970.

\bibitem{NgaiThe09}
Ngai~HV, Th{\'e}ra~M. Error bounds for systems of lower semicontinuous
  functions in {A}splund spaces. Math Program, Ser B.
  2009;\hspace{0pt}116(1-2):397--427.

\bibitem{ZheWei12}
Zheng~XY, Wei~Z. Perturbation analysis of error bounds for quasi-subsmooth
  inequalities and semi-infinite constraint systems. SIAM J Optim.
  2012;\hspace{0pt}22(1):41--65.

\bibitem{WeiHe14}
Wei~Z, He~Q. Sufficient conditions for error bounds and linear regularity in
  {B}anach spaces. Acta Mathematica Sinica, English Series.
  2014;\hspace{0pt}30(3):423--436.

\bibitem{ZheHe14}
Zheng~XY, He~QH. Characterization of metric regularity for {$\sigma$}-subsmooth
  multifunctions. Nonlinear Anal. 2014;\hspace{0pt}100:111--121.

\bibitem{WuYe03}
Wu~Z, Ye~JJ. First-order and second-order conditions for error bounds. SIAM J
  Optim. 2003;\hspace{0pt}14(3):621--645.

\bibitem{Iof83}
Ioffe~AD. On subdifferentiability spaces. Ann New York Acad Sci.
  1983;\hspace{0pt}410:107--119.

\bibitem{Iof10}
Ioffe~AD. On regularity concepts in variational analysis. J Fixed Point Theory
  Appl. 2010;\hspace{0pt}8(2):339--363.

\bibitem{Hua12}
Huang~H. Coderivative conditions for error bounds of $\gamma$-paraconvex
  multifunctions. Set-Valued Var Anal. 2012;\hspace{0pt}20(4):567--579.

\bibitem{Ngh14}
Nghia~TTA. A note on implicit multifunction theorems. Optim Lett.
  2014;\hspace{0pt}8(1):329--341.

\bibitem{Kum09}
Kummer~B. Inclusions in general spaces: {H}oelder stability, solution schemes
  and {E}keland's principle. J Math Anal Appl.
  2009;\hspace{0pt}358(2):327--344.

\end{thebibliography}
\end{document}